\newtheorem{thm}{Theorem}[section]
\newtheorem{prop}[thm]{Proposition}
\newtheorem{cor}[thm]{Corollary}
\newtheorem{dfprop}[thm]{Definition-Proposition}
\theoremstyle{definition}
\newtheorem{definition}[thm]{Definition}
\newtheorem{rmk}[thm]{Remark}
\newtheorem{remark}[thm]{Remark}
\newtheorem{nota}[thm]{Notation}
\newtheorem{ex}[thm]{Example}
\newtheorem{example}[thm]{Example}
\def\O{\mathcal{O}}
\def\Vect{\mathcal{V}ect}
\def\Set{\mathcal{S}et}
\def\V{\mathcal V}
\newcommand{\EZDIAG}[5]{\xymatrix
@C+=2.5cm{*+[r]{#1}
\ar@(u,l)_(0.62){\displaystyle #5}[]
\ar@<1ex>^-{#3}[r]&\ar@<1ex>^-{#4}[l]#2}}
\def\del{\partial}
\def\id{{\mathrm{id}}}
\def\SS{\mathbb{S}}
\def\F{\mathcal{F}}
\def\FF{\mathfrak F}
\def\asts{{\mathcal V}}
\def\N{\mathbb{N}}
\def\Vect{\mathcal{V}ect}
\def\Graphs{{\mathcal G}\it raphs}
\def\Set{{\mathcal S}et}
\def\operads{{\mathfrak O}}
\def\F{\mathcal F}
\def\FF{\mathfrak F}
\def\N{{\mathbb N}}
\def\del{\partial}
\def\FinSet{\mathcal{F}in\mathcal{S}et}
\def\Surj{\mathsf{Surj}}
\def\O{{\mathcal O}}
\def\P{{\mathcal P}}
\def\SS{{\mathbb S}}
\newcommand{\Hom}{\operatorname{Hom}}
\newcommand{\Iso}{\operatorname{Iso}}
\def\V{\asts}
\def\asts{{\mathcal V}}
\def\F{\clusters}
\def\clusters{{\mathcal F}}
\def\ta{\twoheadrightarrow}
\def\ot{\otimes}
\newcommand{\Indec}{\mathcal{P}}
\def\Gpd{\mathcal{G}}
\def\M{\mathcal{M}}
\tikzstyle{none}=[]
\tikzstyle{White}=[fill=white, draw=black, shape=circle, inner sep=1.2mm]
\tikzstyle{SmallWhite}=[fill=white, draw=black, shape=circle, inner sep=0.6mm]
\tikzstyle{Black}=[fill=black, draw=black, shape=circle, inner sep=1.2mm]
\tikzstyle{SmallBlack}=[fill=black, draw=black, shape=circle, inner sep=0.6mm]
\tikzstyle{WhiteSq}=[fill=white, draw=black, shape=rectangle]
\tikzstyle{BlackSq}=[fill=black, draw=black, shape=rectangle]
\tikzstyle{GraySq}=[fill=gray, draw=black, shape=rectangle]
\tikzstyle{Red}=[fill=red, draw=black, shape=circle]
\tikzstyle{Blue}=[fill=blue, draw=black, shape=circle]
\tikzstyle{Arrow}=[->]
\tikzstyle{Mapsto}=[{|->}]
\tikzstyle{Dash}=[-, dashed]
\tikzstyle{DashArrow}=[->, dashed]
\tikzstyle{Double}=[<->]
\def\Corel{\mathcal{C}orel}
\def\Cospan{\mathcal{C}ospan}
\def\Span{\mathcal{S}pan}
\setlist[description]{font=\normalfont\scshape}
\title{Calculations for plus constructions}
\subjclass[2010]{Primary 18M60}
\author{Michael Monaco}
\email{monacom@purdue.edu}
\address{Purdue University Department of Mathematics, West Lafayette, IN 47907}
\begin{document}

\begin{abstract}
  In recent work by Kaufmann and the author, we define a plus construction for monoidal categories and showed that if the monoidal category is a unique factorization category, then the plus construction yields a Feynman category.
  In this paper, we will use different methods to construct UFCs and demonstrate how the plus construction reproduces and clarifies existing constructions through explicit computations.
\end{abstract}

\maketitle
\setcounter{tocdepth}{2}

\section*{Introduction}

In the framework of Feynman categories, the plus construction plays a fundamental role in describing algebras, twists, and more recently monoid definitions.
In \cite{KMoManin}, they extend the plus construction to a type of monoidal category which they call a unique factorization category.
We will use techniques from different areas to give explicit examples of these unique factorization categories and give applications of the plus construction.
This paper will focus on four fundamental examples: the trivial Feynman category, the category of finite sets, the category of cospans, and the category of spans.

As described by Kaufmann and Ward in \cite{feynman}, the plus constructions $(\FF^{triv})^+$ and $(\FinSet)^+$ are related to monoids and operads respectively.
We will add to this by considering the nc-plus construction introduced in \cite{KMoManin} and operads with multiplication.

The category of cospans has finite sets as objects and equivalence classes of diagrams $S \rightarrow V \leftarrow T$ as morphisms.
In algebraic topology, cospans have proven to be a useful way to work with cobordisms and related structures.
Examples of this sort can be found in the work of Grandis~\cite{grandis1,grandis2,grandis3}, Feshbach and Voronov~\cite{higher-cobordism-TQFT}, and Steinebrunner~\cite{steinebrunner}, just to name a few.
Following this line of thought, we will show how Frobenius algebras naturally arise from a combinatorial version of the plus construction.
We will also use the notion of a structured cospan due to Baez and Courser~\cite{baez2019structured, courser2020open} to give colored versions of properads.
The nc-plus construction of cospans and its relation to mergers will also be discussed.

In \cite{KMoManin}, they show that the category of spans is a hereditary unique factorization category.
This implies that $\Span^+$ is a Feynman category.
Despite having some interesting properties, the gadgets corepresented by $\Span^+$ have not been explored to the same extent as the gadgets corepresented by $\Cospan^+$.
To better understand the algebraic significance of this structure, we will briefly survey some of the combinatorial and categorical properties of $\Span$.
We will then end by describing the relation of $\Span$ and $\Span^+$ to bialgebras.

\subsection*{Acknowledgments}

We would like to thank Ralph Kaufmann for his continued support and thank Philip Hackney and Jan Steinebrunner for helpful discussions on this topic.
We also appreciate the work of the referee and the helpful feedback they gave.

\section{Plus constructions, Feynman categories, and UFCs}

In \cite{categories-seriously}, Lawvere advocates taking descriptions of mathematical objects as categories ``seriously''.
For example, a group $G$ canonically determines a category $\Sigma G$ with one object and $G$ as a set of morphisms.
Applying this attitude to $\Sigma G$, one recovers many classical ideas such as representations, intertwining operators, induced representations, and Frobenius reciprocity as special cases of different categorical constructions.
Because of this, it is common to blur the distinction between these concepts.
However, it is useful to keep this distinction to separate the datum describing the structure (the set $G$ with a binary operation and certain properties) from the structure itself (the category $\Sigma G$).

These sorts of distinctions are especially important in the study of operad-like structures.
In \cite{feynman}, Kaufmann and Ward introduced a special type of monoidal category called a Feynman category to encode different ``types'' of operad-like structure.
In their formalism, a strong monoidal functor $\O: \F \to \mathcal{C}$  encodes an operad-like structure of ``type $\F$'' in a category $\mathcal{C}$.
For Feynman categories that come from a plus construction, a strong monoidal functor $\O: \F^+ \to \mathcal{C}$ determines a category $\F_{\O}$ by a so-called indexed enrichment.
This process of going from the datum $\O$ to the category $\F_\O$ plays a key role in describing algebras and the theory of twists.
In \cite{KMoManin}, they extend this by defining the plus construction for a larger class of monoidal categories and show that $\M^+$ is a Feynman category when $\M$ is a unique factorization category.
In this section, we will briefly recall the definitions of unique factorization categories and the plus construction.

\subsection{Unique factorization categories}

\begin{nota}
  Let $\mathcal{C}^\boxtimes$ denote the free symmetric monoidal category generated by the category $\mathcal{C}$.
\end{nota}

\begin{definition}
  A \emph{unique factorization category} (UFC) is a triple $(\M, \Indec, \jmath)$ where $\M$ is a symmetric monoidal category, $\Indec$ is a groupoid, and $\jmath$ is a functor $\jmath: \Indec \to \Iso(\M\downarrow \M)$ such that:
  \begin{enumerate}
  \item The functor $\jmath^{\boxtimes}: \Indec^{\boxtimes} \to \Iso(\M \downarrow \M)$ induced from the monoidal structure of $\M$ induces an equivalence of categories.
  \item The slice categories of $\M$ are essentially small.
  \end{enumerate}
  We will often refer to a unique factorization category by its underlying monoidal category $\M$.
  The pair $(\Indec, \jmath)$ will be called a {\em basis of morphisms} and its elements will be called {\em irreducibles} or {\em basic morphisms}.
\end{definition}

\begin{definition}
  \cite{KMoManin}
  A \emph{Feynman category} $\F$ is a unique factorization category with additional data $(\V, \imath)$ such that: 
  \begin{enumerate}
  \item The functor $\imath^\boxtimes: \V^\boxtimes \stackrel{\sim}{\to} \Iso(\M)$ induces an equivalence.
  \item The irreducible morphisms are given by $\P = \Iso(\F\downarrow \V)$ with $\jmath = (id_{\F},id_{\F},\imath)$ as a compatible choice of basic morphisms $\P$ making $\F$ into a unique factorization category.
  \end{enumerate}
  A choice of such a pair $(\V,\imath)$ will be called a {\em basis of objects} and its elements will be called {\em irreducibles} or {\em basic objects}.
\end{definition}

\begin{dfprop}
  \cite{KMoManin}
  A basis for morphisms $(\Indec,\jmath)$ is \emph{hereditary} if for every pair of composable morphisms $(\phi_0,\phi_1)$, with $\phi_1 \circ \phi_0=\phi$, and decomposition into irreducible morphisms 
  \begin{equation}
    \phi_0\simeq \bigotimes_{v\in V}\phi_{0,v}, \quad  \phi_1\simeq \bigotimes_{w\in W} \phi_{1,w},  \text{ and }\phi=\bigotimes_{u\in U}\phi_u
  \end{equation}    
  there exists a partition of $V \amalg W = \amalg_{u\in U} P_u$ indexed by $U$, such that for each $u \in U$ there is a decomposition pair $(\phi_{0,u},\phi_{1,u})$ of the $\phi_u$,  viz.\ $\phi_{1,u} \phi_{0,u}=\phi_u$, such that
  \begin{equation}
    \label{eq:hereditarycond}
    \phi_{0,u} \simeq \bigotimes_{v\in P_u\cap V}\phi_{0,v} \text{ and } \phi_{1,u} \simeq \bigotimes_{w\in P_u\cap W}\phi_{1,w}
  \end{equation}
  A unique factorization category is a {\em hereditary UFC} if its basis is hereditary.
\end{dfprop}

\begin{rmk}
  Unique factorization categories are versatile structures which admit many descriptions.
  In \cite{KMoManin}, these conditions were equivalently formulated as right Ore conditions.
  In \cite[Proposition 6.31]{KMoManin}, they also show that a hereditary UFC $\M$ naturally determines an indexing functor $\M \to \Cospan$.
  Categories equipped with indexing functors that satisfy the appropriate conditions play an important role in the work of Steinebrunner~\cite{steinebrunner} where they go by the name \emph{labeled cospan categories}.
  Moreover, Hackney and Beardsley  \cite{labeled-cospan-properad} describe a connection between these labeled cospan categories and Segal presheaves of a category of level graphs.
\end{rmk}

\subsection{Plus constructions of categories}

\begin{definition}
  \cite{KMoManin}
  Given a category $\mathcal{C}$, define $\mathcal{C}^{\boxtimes +}$ so that
  \begin{enumerate}
  \item The objects are words $\phi_1 \boxtimes \ldots \boxtimes \phi_n$ of morphisms $\phi_i \in \mathcal{C}$.
  \item The morphisms are generated by two types of basic morphisms:
    \begin{description}
    \item[Isomorpisms] are words $(\sigma_1\Downarrow\sigma'_1) \boxtimes \ldots \boxtimes (\sigma_n\Downarrow\sigma'_n)$ where the $\sigma_i$ and $\sigma_i'$ are isomorphisms of $\mathcal{C}$.
    \item[$\gamma$-morphisms] for every composable pair  $(\phi_1,\phi_0)$ there is a generator
      \begin{equation*}
        \gamma_{\phi_1,\phi_0}: \phi_1 \boxtimes \phi_0 \to \phi_1 \circ \phi_0
      \end{equation*}
    \end{description}
  \item There are several relations including the typical ones like associativity, identities, and interchange as well as some new ones like equivariance with respect to isomorphisms.
    See \cite{KMoManin} for details.
  \end{enumerate}
\end{definition}

\begin{definition}
  \cite{KMoManin}
  In the case where $\mathcal{C} = \M$ is a monoidal category, there is a refinement:
  \begin{enumerate}
  \item The \emph{nc-plus construction} $\M^{nc +}$ is obtained from $\M^{\boxtimes +}$ by adjoining new generators $\mu_{\phi_0,\phi_1}: \phi_1 \boxtimes \phi_2 \to \phi_1 \ot \phi_2$ and imposing new relations.
  \item The \emph{(localized) plus construction} $\M^{+loc}$ is defined to be the localization of $\mu$.
    That is, we add the morphism $\mu^{-1}$ and mod out by $\mu \circ \mu^{-1} = \mu^{-1} \circ \mu = \id$.
  \end{enumerate}
  In principle, these localizations can be difficult to compute.
  However when $\M$ is a hereditary unique factorization category, \cite{KMoManin} defines a plus construction $\M^{+}$ which is a monoidally equivalent to $\M^{+loc}$.
  Moreover, the hereditary property is characterized as a right Ore condition, so there is a right roof calculus making the computations tractable.
  Since we only work with hereditary UFCs, we will not make a distinction between $\M^{+loc}$ and $\M^{+}$ and we will refer to both of them as ``the plus construction''.
\end{definition}

\begin{prop}\cite{KMoManin}
  The plus construction of a hereditary unique factorization category is a Feynman category.
\end{prop}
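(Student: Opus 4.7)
My plan is to exhibit a basis of objects $(\V, \imath)$ for $\M^+$ and verify the two Feynman category axioms, with the hereditary condition entering decisively through the right roof calculus.

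I would take $\V$ to be the groupoid of irreducible morphisms of the underlying UFC $\M$, and let $\imath: \V \to \M^+$ be the one-letter word embedding sending an irreducible $\phi \in \M$ to the one-letter object $\phi$ of $\M^+$, with commutative $\M$-isomorphism squares going to basic $\M^+$-isomorphisms. Essential surjectivity of $\imath^\boxtimes: \V^\boxtimes \to \Iso(\M^+)$ is immediate from the UFC structure of $\M$: any letter $\phi_i$ of an object $\phi_1 \boxtimes \cdots \boxtimes \phi_n$ of $\M^+$ has an irreducible decomposition $\phi_i \simeq \bigotimes_u \phi_{i,u}$ in $\M$, which the composite $\mu^{-1}\gamma$ upgrades to an $\M^+$-isomorphism $\phi_i \cong \phi_{i,u_1} \boxtimes \cdots \boxtimes \phi_{i,u_{m_i}}$, so the whole word is isomorphic to a word of irreducibles. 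Fully faithfulness on isomorphisms comes from the right roof calculus, as only $\V$-pieces can survive in an invertible roof.

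Next I would verify the UFC axiom for $\M^+$ with $\P = \Iso(\M^+ \downarrow \V)$ and $\jmath = (\id, \id, \imath)$. Any $\P$-morphism is an isomorphism class of right roofs into a one-letter irreducible $\phi \in \V$ with apex a word of the irreducible factors of $\phi$. Given an arbitrary morphism $\Phi: \chi \to \psi_1 \boxtimes \cdots \boxtimes \psi_n$ in $\M^+$, I pick a right roof representative and use the hereditary partition of its apex letters to distribute them among the $n$ target letters, yielding a $\boxtimes$-factorization $\Phi \simeq \Phi_1 \boxtimes \cdots \boxtimes \Phi_n$ into $\P$-morphisms $\Phi_k$ targeting $\psi_k$. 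Essential smallness of slice categories of $\M^+$ reduces to the corresponding property in $\M$.

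The main obstacle will be faithfulness of $\P^\boxtimes \to \Iso(\M^+ \downarrow \M^+)$: any two $\P$-decompositions of a given morphism in $\M^+$ must be canonically isomorphic. This is precisely where \eqref{eq:hereditarycond} is essential: the hereditary partition $V \amalg W = \amalg_u P_u$ canonically relates different irreducible decompositions of a single composite in $\M$, and UFC uniqueness in $\M$ then lifts across the right roof calculus to produce the required canonical isomorphism of decompositions in $\M^+$.
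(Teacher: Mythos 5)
The paper does not actually prove this proposition: it is quoted from \cite{KMoManin}, so the only comparison available is with the argument given there, and your plan follows the same broad route one finds in that source --- take $\V \simeq \Indec$ embedded as one-letter words, check that $\imath^{\boxtimes}$ is an equivalence onto $\Iso(\M^+)$, and let hereditarity, in its right-Ore guise, control the morphisms of $\M^+$ through right roofs. Two remarks on your sketch. First, the isomorphism splitting a letter $\phi_i$ into its irreducible factors is the basic isomorphism generator $(\sigma \Downarrow \sigma')$ supplied by the UFC structure of $\M$ followed by iterated $\mu^{-1}$; the $\gamma$-generators encode composition in $\M$ and play no role there, so ``$\mu^{-1}\gamma$'' is a slip. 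Second, and more substantively, the decisive use of \eqref{eq:hereditarycond} is not only (or even mainly) in the faithfulness of $\P^{\boxtimes} \to \Iso(\M^+ \downarrow \M^+)$: it is what makes a single $\gamma$-generator, once its source and target are conjugated into words of irreducibles, factor letterwise over the target, and one must then show that this letterwise factorization is preserved under composition of arbitrary composites of generators and inverted $\mu$'s --- equivalently, that roof composition (whose very availability is the hereditary condition restated as a right Ore condition) preserves such factorizations. Uniqueness of the decomposition up to isomorphism then comes mostly from the UFC equivalence $\Indec^{\boxtimes} \simeq \Iso(\M \downarrow \M)$ for $\M$ itself rather than from hereditarity. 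Your plan names the correct ingredients, but the closure-under-composition and well-definedness-of-roofs verifications are exactly where the cited proof spends its effort, and in your sketch they are asserted rather than carried out.
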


\section{Trivial Feynman category}

Define the \emph{trivial category} $1$ to be the category with a single object $\ast$ and a single identity morphism.
Define the \emph{trivial Feynman category} $\FF^{triv}$ so that $\F = 1^{\boxtimes}$ and $\V = 1$.
Explicitly, the objects of $\FF^{triv}$ are strings $\ast^{\boxtimes n}$ with the empty string as the unit and the morphisms are the commutativity constraints.

\subsection{Monoids}

For a Feynman category $\FF = (\F, \V, \imath)$, a strong monoidal functor $\O: \F \to \mathcal{C}$ is called an $\F$-op in $\mathcal{C}$.
The name is supposed to evoke the idea that $\O$ is an operad-like structure of ``$\F$-type''.
In our case, an op of the trivial Feynman category $\FF^{triv}$ is essentially a choice of an object in $\mathcal{C}$.
Interestingly, a lot of theory can be ``boot-strapped'' from this simple example.
For a full explanation of this idea, we refer the reader to Section 3.4 of \cite{feynmanrep}.
For our purposes, we single-out the following fact:

\begin{prop}[Kaufmann~\cite{feynmanrep}]
  \label{triv-plus-corep-monoids}
  As a combinatorial object, $(\FF^{triv})^+$ is equivalent to the monoidal category $\mathcal{S}urj^<$ whose objects are finite sets and whose morphisms are surjections with ordered fibers.
  Moreover $(\FF^{triv})^+$ corepresents monoids as a Feynman category.
\end{prop}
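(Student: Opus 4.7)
My plan is to establish the two halves of the statement separately: first build a monoidal equivalence $(\FF^{triv})^+ \simeq \mathcal{S}urj^<$, then invoke the classical fact that $\mathcal{S}urj^<$ is the PROP whose algebras are monoids.

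The first step is to unpack $(\FF^{triv})^+$. Since every morphism of $\F^{triv} = 1^\boxtimes$ is a permutation, the isomorphism generators of the plus construction let me replace any $\sigma\in S_n$ up to iso by $\id_{\ast^{\boxtimes n}}$ (by conjugating $\phi$ by $\sigma$ itself in the slice), and the $\mu$-isos further identify $\id_{\ast^{\boxtimes n}}$ with $\id_\ast^{\boxtimes n}$. Hence every object of $(\FF^{triv})^+$ is isomorphic to $\id_\ast^{\boxtimes N}$ for some $N\geq 0$. Because $\F^{triv}$ is a hereditary UFC whose only basic morphism is $\id_\ast$, the right roof calculus of \cite{KMoManin} applies, and the only essentially new generating morphism remaining after iso normalization is $\gamma_{\id_\ast,\id_\ast}:\id_\ast\boxtimes\id_\ast\to\id_\ast$.

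I then define a strong monoidal functor $F:(\FF^{triv})^+\to\mathcal{S}urj^<$ by sending $\id_\ast^{\boxtimes N}$ to $\{1,\ldots,N\}$, sending $\gamma_{\id_\ast,\id_\ast}$ to the surjection $\{1,2\}\twoheadrightarrow\{\ast\}$ with its tautological fiber order, and sending the braiding of $(\FF^{triv})^+$ to the symmetry of disjoint union. Essential surjectivity is immediate. For full faithfulness I appeal to the standard generators-and-relations presentation of $\mathcal{S}urj^<$ as the free symmetric monoidal category on an associative multiplication; the interchange and associativity of $\gamma$ together with equivariance under iso generators cut out exactly these defining relations. Given the equivalence, the corepresentation of monoids follows from the classical identification of $\mathcal{S}urj^<$ as the PROP for monoids: a strong monoidal $\O$ out of $\mathcal{S}urj^<$ is determined by $M := \O(\{\ast\})$ together with its image of the $2 \to 1$ surjection.

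The main obstacle I anticipate is the careful matching of relations: the plus-construction relations from \cite{KMoManin} must be shown to cut out precisely the relations of $\mathcal{S}urj^<$, with no spurious extras. A particularly subtle point concerns the monoidal unit---the empty word in $(\FF^{triv})^+$ must be identified with $\id_I$ via a nullary $\mu$-iso, and tracking how this identity-on-the-unit interacts with the basic object $\id_\ast$ is essential in order to get the comparison to land exactly in $\mathcal{S}urj^<$ rather than some slightly larger or smaller category.
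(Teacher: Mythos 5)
The paper does not actually prove this statement---it is imported from Kaufmann \cite{feynmanrep}, and the surrounding text only illustrates the identification via the stacking diagrams---so I can only compare your route with the expected one, and it is essentially that route: normalize objects of $(\FF^{triv})^+$ to $\id_\ast^{\boxtimes N}$ using the $(\sigma\Downarrow\sigma')$- and $\mu$-isomorphisms, observe that hereditarity reduces the generating morphisms to $\gamma_{\id_\ast,\id_\ast}$ together with symmetries, and match this against the presentation of $\mathcal{S}urj^<$; the ordered fibers record exactly the order in which strands are merged by iterated $\gamma$'s, which is well defined by associativity, and surjectivity reflects the absence of any morphism from the monoidal unit to $\id_\ast$. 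Your outline is sound, but be aware that the entire content of the statement sits in the step you defer: faithfulness, i.e.\ that the plus-construction relations impose nothing beyond associativity, interchange, equivariance and the symmetric monoidal axioms. The cleanest way to close this is not to ``inspect'' the relation sets but to run the universal properties in both directions: the presentation of $(\FF^{triv})^+$ gives a strong monoidal functor $F$ to $\mathcal{S}urj^<$ once you check the relations hold there (fullness is then clear since the $2\to 1$ surjection and the symmetries generate), while the freeness of $\mathcal{S}urj^<$ on an associative multiplication gives a functor back sending the $2\to 1$ surjection to $\gamma_{\id_\ast,\id_\ast}$, and checking both composites on generators yields the equivalence.

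Two smaller caveats. First, ``the PROP for monoids'' in the classical sense is finite sets with \emph{all} maps with ordered fibers; restricting to surjections gives the non-unital (semigroup) variant, which is what ``monoid'' means here in the Kaufmann--Ward usage---there is no nullary operation because $\mathcal{S}urj^<$ has no morphism $\varnothing \to \{\ast\}$, and units are added by a separate construction. Second, your worry about a ``nullary $\mu$-iso'' identifying the empty word with $\id_{1_{\M}}$ is the right kind of bookkeeping but is not where the danger lies; the relevant point is precisely the absence of morphisms out of the unit into $\id_\ast$, which is what keeps the comparison landing in surjections rather than all maps.
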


\begin{example}
  To understand $(\FF^{triv})^+$ as a combinatorial category, it is helpful to think of it in terms of the diagrams of \cite{feynmanrep}.
  In their depiction, an object of $(\FF^{triv})^+$ is represented as a string consisting of the basic object $\id_{\ast}$ and a morphism is represented as a way of stacking these objects.
  \begin{equation}
    \begin{tikzpicture}[baseline=(current  bounding  box.center)]
      \node [style=Black] (0) at (-3.75, -1.25) {};
      \node [style=none] (1) at (-3.75, -0.75) {};
      \node [style=none] (2) at (-3.75, -1.75) {};
      \node [style=Black] (3) at (-3.25, -1.25) {};
      \node [style=none] (4) at (-3.25, -0.75) {};
      \node [style=none] (5) at (-3.25, -1.75) {};
      \node [style=Black] (6) at (-2.75, -1.25) {};
      \node [style=none] (7) at (-2.75, -0.75) {};
      \node [style=none] (8) at (-2.75, -1.75) {};
      \node [style=Black] (9) at (-2.25, -1.25) {};
      \node [style=none] (10) at (-2.25, -0.75) {};
      \node [style=none] (11) at (-2.25, -1.75) {};
      \node [style=Black] (12) at (-1.75, -1.25) {};
      \node [style=none] (13) at (-1.75, -0.75) {};
      \node [style=none] (14) at (-1.75, -1.75) {};
      \node [style=Black] (15) at (-1.25, -1.25) {};
      \node [style=none] (16) at (-1.25, -0.75) {};
      \node [style=none] (17) at (-1.25, -1.75) {};
      \node [style=none] (18) at (-3.75, -2.25) {$1$};
      \node [style=none] (19) at (-3.25, -2.25) {$2$};
      \node [style=none] (20) at (-2.75, -2.25) {$3$};
      \node [style=none] (21) at (-2.25, -2.25) {$4$};
      \node [style=none] (22) at (-1.75, -2.25) {$5$};
      \node [style=none] (23) at (-1.25, -2.25) {$6$};
      \node [style=Black] (24) at (1.25, 0.25) {};
      \node [style=Black] (27) at (2.5, 0.25) {};
      \node [style=none] (28) at (2.5, 0.75) {};
      \node [style=Black] (30) at (0, -0.25) {};
      \node [style=none] (31) at (0, 0.25) {};
      \node [style=none] (32) at (0, -0.75) {};
      \node [style=Black] (33) at (1.25, 0.75) {};
      \node [style=none] (34) at (1.25, 1.25) {};
      \node [style=Black] (36) at (1.25, -0.25) {};
      \node [style=none] (38) at (1.25, -0.75) {};
      \node [style=Black] (39) at (2.5, -0.25) {};
      \node [style=none] (41) at (2.5, -0.75) {};
      \node [style=none] (42) at (1.75, 0.25) {$1$};
      \node [style=none] (43) at (3, 0.25) {$2$};
      \node [style=none] (44) at (0.5, -0.25) {$3$};
      \node [style=none] (45) at (1.75, 0.75) {$4$};
      \node [style=none] (46) at (1.75, -0.25) {$5$};
      \node [style=none] (47) at (3, -0.25) {$6$};
      \node [style=none] (48) at (-0.5, -1.25) {};
      \node [style=none] (49) at (3.5, -1.25) {};
      \node [style=Black] (50) at (4.25, -1.25) {};
      \node [style=none] (51) at (4.25, -0.75) {};
      \node [style=none] (52) at (4.25, -1.75) {};
      \node [style=Black] (53) at (4.75, -1.25) {};
      \node [style=none] (54) at (4.75, -0.75) {};
      \node [style=none] (55) at (4.75, -1.75) {};
      \node [style=Black] (56) at (5.25, -1.25) {};
      \node [style=none] (57) at (5.25, -0.75) {};
      \node [style=none] (58) at (5.25, -1.75) {};
      \draw (1.center) to (0);
      \draw (0) to (2.center);
      \draw (4.center) to (3);
      \draw (3) to (5.center);
      \draw (7.center) to (6);
      \draw (6) to (8.center);
      \draw (10.center) to (9);
      \draw (9) to (11.center);
      \draw (13.center) to (12);
      \draw (12) to (14.center);
      \draw (16.center) to (15);
      \draw (15) to (17.center);
      \draw (28.center) to (27);
      \draw (31.center) to (30);
      \draw (30) to (32.center);
      \draw (34.center) to (33);
      \draw (36) to (38.center);
      \draw (39) to (41.center);
      \draw (33) to (24);
      \draw (24) to (36);
      \draw (27) to (39);
      \draw [style=Arrow] (48.center) to (49.center);
      \draw (51.center) to (50);
      \draw (50) to (52.center);
      \draw (54.center) to (53);
      \draw (53) to (55.center);
      \draw (57.center) to (56);
      \draw (56) to (58.center);
    \end{tikzpicture}
  \end{equation}
\end{example}

\subsection{Graded monoids}

If we acknowledge the importance of the trivial Feynman category for the plus construction, then it is natural to consider the nc-plus construction of this category as well.
First, observe that the category $(\FF^{triv})^{nc+}$ has a basic object $\id_{\ast^{\boxtimes n}}$ for each natural number $n$, hence any $(\FF^{triv})^{nc+}$-op will naturally involve a grading.

To fix notation, we denote a graded object by $A = \{A_i\}_{i \in \N}$.
When finite coproducts exist, there is a canonical tensor product of graded objects in $\mathcal{C}$:
\begin{equation}
  (A \bullet B)_n = \coprod_{n=p+q} A_p \otimes B_q
\end{equation}
The monoidal unit for this product is then
\begin{equation}
 (1_{Gr})_n =
  \begin{cases}
    1_{\mathcal{C}}, & n=0 \\ 0_{\mathcal{C}}, & n > 0
  \end{cases}
\end{equation}
Note that a pointing $\eta: 1_{Gr} \to A$ in graded $\mathcal{C}$-objects amounts to a pointing $1_{\mathcal{C}} \to A_0$ in $\mathcal{C}$.
Hence the unit conditions for $\eta$ say that the following is an isomorphism:
\begin{equation*}
  A_n \to 1_{\mathcal{C}} \otimes A_n \to A_0 \otimes A_n \to A_n
\end{equation*}

\begin{prop}
  The category $(\F^{triv})^{nc+}$ corepresents monoid objects in the category of symmetric sequences of $\mathcal{C}$-monoids.
\end{prop}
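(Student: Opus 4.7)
The plan is to unpack the generators and relations of the nc-plus construction of $\FF^{triv}$ and match them, piece by piece, with the defining data of a monoid in symmetric sequences of $\mathcal{C}$-monoids. A strong monoidal functor $\O: (\FF^{triv})^{nc+} \to \mathcal{C}$ is determined by its values on the basic objects and the two families of basic morphisms, subject to the relations. The basic objects of $(\FF^{triv})^{nc+}$ are $\id_{\ast^{\boxtimes n}}$ for $n \in \N$, and their automorphism groups are precisely $\Sigma_n$, coming from the commutativity constraints on $\ast^{\boxtimes n}$. Setting $A_n := \O(\id_{\ast^{\boxtimes n}})$ therefore automatically produces a symmetric sequence $A = \{A_n\}_{n \in \N}$.

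Next I would separate the basic morphisms into the two families and identify the algebraic structure each encodes. The $\gamma$-morphisms generate the sub-plus-construction $(\FF^{triv})^{+}$ sitting inside $(\FF^{triv})^{nc+}$; by Proposition~\ref{triv-plus-corep-monoids}, their images give each individual $A_n$ the structure of a $\mathcal{C}$-monoid, and the equivariance relations make this monoid structure $\Sigma_n$-equivariant. The $\mu$-generators $\mu_{\id_{\ast^{\boxtimes p}},\id_{\ast^{\boxtimes q}}}: \id_{\ast^{\boxtimes p}} \boxtimes \id_{\ast^{\boxtimes q}} \to \id_{\ast^{\boxtimes p+q}}$ produce maps $A_p \ot A_q \to A_{p+q}$, and the associativity, unit, and symmetry relations on $\mu$, together with the unit condition recalled just before the proposition, upgrade these maps into the data of a monoid in symmetric sequences under the convolution product $\bullet$.

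The step I expect to be the main obstacle is verifying that the two monoid structures are compatible in precisely the way required for a monoid object in the category of symmetric sequences of $\mathcal{C}$-monoids. This compatibility is encoded by the interchange relation between $\gamma$ and $\mu$ in the nc-plus construction; translating it carefully, one has to check that each convolution multiplication $A_p \ot A_q \to A_{p+q}$ is a morphism of $\mathcal{C}$-monoids with respect to the monoid structures coming from the $\gamma$'s. After working out this diagram chase, the assignment $\O \mapsto A$ is seen to be fully faithful. Going backwards, any monoid in symmetric sequences of $\mathcal{C}$-monoids reconstructs an $(\FF^{triv})^{nc+}$-op by sending each basic object and basic morphism to the prescribed component of the monoid data, with the defining relations holding automatically from the two monoid axioms and their compatibility. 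This establishes the claimed corepresentation.
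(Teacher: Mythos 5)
Your proposal is correct and follows essentially the same route as the paper: extract the symmetric sequence $A_n = \O(\id_{\ast^{\boxtimes n}})$ with its $\SS_n$-actions from the commutativity constraints, get the levelwise monoid structures from the $\gamma$-morphisms, the convolution multiplication $A_p \otimes A_q \to A_{p+q}$ from the $\mu$-generators, and the compatibility from the interchange relation. The only differences are cosmetic: you appeal to Proposition~\ref{triv-plus-corep-monoids} where the paper reads the levelwise monoid structure off the $\gamma$-image directly, and you additionally sketch the converse reconstruction, which the paper leaves implicit.
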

\begin{proof}
  Fix a strong monoidal functor $\O: (\F^{triv})^{nc+} \to \mathcal{C}$.
  Define an $\SS_n$-module $M_{\O}(n) = \O(\id_{\ast^{\boxtimes n}})$ with the action $\SS_n \to \mathrm{Aut}(M_{\O})$ coming from commutativity constraint.
  All together, this forms an $\SS$-module $M_{\O} = \{M_{\O}(n)\}_{n \in \N}$.
  The image of $\O$ on the morphism $\gamma: \id_{\ast^{\boxtimes n}} \boxtimes \id_{\ast^{\boxtimes n}} \to \id_{\ast^{\boxtimes n}}$ is a morphism $M_{\O}(n) \otimes M_{\O}(n) \to M_{\O}(n)$ which makes each $M_{\O}(n)$ into a monoid.
  The image of $\O$ on the $\mu$-morphism $\mu: \id_{\ast^{\boxtimes n}} \boxtimes \id_{\ast^{\boxtimes m}} \to \id_{\ast^{\boxtimes (n+m)}}$ is a morphism $M_{\O}(n) \otimes M_{\O}(m) \to M_{\O}(n+m)$.
  These assemble into a morphism $M_{\O} \bullet M_{\O} \to M_{\O}$.
  Hence we know that $M_{\O} = \{M_{\O}(n)\}_{n \in \N}$ is at least a monoid object in the category of $\N$-graded sets.
  
  The interchange relation for the plus construction implies that the following diagram commutes:
  \begin{equation}
    \begin{tikzcd}
      M_{\O}(n) \otimes M_{\O}(m) \otimes M_{\O}(n) \otimes M_{\O}(m) \arrow[d] \arrow[r] & M_{\O}(n+m) \otimes M_{\O}(n+m) \arrow[d] \\
      M_{\O}(n) \otimes M_{\O}(m) \arrow[r] & M_{\O}(n+m)
    \end{tikzcd}
  \end{equation}
  Therefore $M_{\O} \bullet M_{\O} \to M_{\O}$ respects the monoid structure of $M_{\O}$ making it into a monoid objects in the category of symmetric sequences of $\mathcal{C}$-monoids.
\end{proof}

\begin{cor}
  Any monoid (an $(\FF^{triv})^+$-op) pulls-back to a monoid object in the category of symmetric monoids (an $(\FF^{triv})^{nc+}$-op).
\end{cor}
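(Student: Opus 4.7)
The plan is to exhibit a canonical symmetric strong monoidal functor $L\colon (\FF^{triv})^{nc+}\to(\FF^{triv})^+$ and observe that pulling back along $L$ realizes the claimed correspondence. By definition, $(\FF^{triv})^+ = (\FF^{triv})^{+loc}$ is obtained from $(\FF^{triv})^{nc+}$ by formally inverting the $\mu$-generators, so there is an evident symmetric monoidal functor $L$ which is the identity on objects and on the $\gamma$- and isomorphism-generators, and which sends each $\mu_{\phi_0,\phi_1}$ to its now-invertible image in the localization.

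Given a strong monoidal $\O\colon(\FF^{triv})^+\to\mathcal{C}$, i.e.\ a monoid $M = \O(\id_\ast)$ in $\mathcal{C}$ in the sense of Proposition~\ref{triv-plus-corep-monoids}, the composite $\O\circ L\colon (\FF^{triv})^{nc+}\to\mathcal{C}$ is again strong monoidal, and the preceding proposition identifies it with a monoid object in the category of symmetric sequences of $\mathcal{C}$-monoids. Unwinding the construction, one reads off
\begin{equation*}
  M_{\O\circ L}(n) \;=\; \O(L(\id_{\ast^{\boxtimes n}})) \;\cong\; M^{\otimes n},
\end{equation*}
where the isomorphism uses that iterated $\mu$'s become invertible in $(\FF^{triv})^+$; the internal monoid structure on each $M^{\otimes n}$ is the componentwise product inherited from $M$, and the $\bullet$-multiplication is the evident concatenation $M^{\otimes n}\otimes M^{\otimes m}\to M^{\otimes (n+m)}$.

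The only real technical point is the construction of $L$. This reduces to checking that the defining relations of $(\FF^{triv})^{nc+}$ (associativity of $\gamma$, interchange, $\mu$-compatibility, equivariance) are preserved when $\mu$ is made invertible, and that the $\boxtimes$-product descends to the localization. Both hold essentially by inspection, since the class of $\mu$-generators is stable under $\boxtimes$ and the listed relations are already of the kind that survive inverting a closed class. Once $L$ is in hand, the corollary is formal: precomposition with $L$ sends an $(\FF^{triv})^+$-op to an $(\FF^{triv})^{nc+}$-op, and the identification of these two sorts of ops with monoids and with monoid objects in symmetric sequences of monoids has already been established.
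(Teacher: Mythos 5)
Your proposal is correct and follows essentially the same route as the paper: pull back along the canonical quotient/localization functor $(\FF^{triv})^{nc+} \to (\FF^{triv})^+$, then unwind to get $A_n = M^{\otimes n}$ with concatenation as $\mu$ and the shuffle-then-componentwise product as $\gamma$. The paper states this abstractly in one line and then gives the same concrete description, writing the componentwise product explicitly as a commutativity constraint followed by $m^{\otimes n}$.
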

\begin{proof}
  Abstractly, this is just a pullback of the quotient functor $(\FF^{triv})^{nc+} \to (\FF^{triv})^+$.
  Concretely, given a monoid $M$, we define the graded object $A = \{A_i\}_{i \in \N}$ by $A_i = M^{\otimes i}$.
  Define $\mu: A_p \otimes A_q \to A_{p+q}$ to be concatenation.
  To define $\gamma: A_n \otimes A_n \to A_n$, write the multiplication of $M$ as $m: M \otimes M \to M$ and let $C$ be the commutativity constraint associated to the following permutation:
  \begin{equation}
    \begin{pmatrix}
      1 & 1+n & \ldots & n & n+n \\
      1 & 2 & \ldots & 2n-1 & 2n
    \end{pmatrix}
  \end{equation}
  Then $\gamma: A_n \otimes A_n \to A_n$ is the following composition:
  \begin{equation*}
    A_n \otimes A_n
    \overset{C}{\to}
    (A \otimes A)^{\otimes n}
    \overset{m^{\otimes n}}{\to}
    A^{\otimes n} = A_n
    \qedhere
  \end{equation*}
\end{proof}

\section{Finite Sets}

The category $\FinSet$ of finite sets is a Feynman category where the singleton sets are the basic objects and any map $f: X \to Y$ can be factored into a collection of maps $\{f^{-1}(y) \to \{y\}\}_{y \in Y}$.
We will see a connection to operads and make a new connection to operads with multiplication.
We will also point out a structural similarity to Young tableaux which we think is interesting.

\subsection{Operads}
\label{sec:operads}

We can think of the plus construction as ascending upwards in some algebraic hierarchy.
For example, we have seen that ``above'' objects ($\FF^{triv}$-ops), there are monoids ($(\FF^{triv})^+$-ops).
Similarly, ``above'' commutative monoids ($\FinSet$-ops), there are operads ($\FinSet^+$-ops).

\begin{prop}[Kaufmann~\cite{feynmanrep}]
  \label{FinSet-corep-com-monoids}
  The category $\FinSet$ of finite sets corepresents commutative monoids.
\end{prop}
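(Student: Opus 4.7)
The plan is to identify a strong monoidal functor $\O\colon \FinSet \to \mathcal{C}$ with a commutative monoid object $M := \O(\underline{1}) \in \mathcal{C}$, where $\underline{1}$ is a chosen one-point set. Because $\FinSet$ (with disjoint union) is the free symmetric monoidal category on a commutative monoid, $\O$ will be determined up to natural isomorphism by the two structure maps $\mu_M := \O(\nabla)\colon M \otimes M \to M$ and $\eta_M := \O(!)\colon 1_{\mathcal{C}} \to M$, where $\nabla\colon \underline{2} \to \underline{1}$ is the fold and $!\colon \emptyset \to \underline{1}$ is the unique map. The content of the proposition is then that the commutative-monoid axioms correspond exactly to the relations generated by $\nabla$ and $!$ inside $\FinSet$.

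For the forward direction I would extract $(\mu_M, \eta_M)$ as above and derive each monoid axiom by exhibiting the corresponding identity of morphisms in $\FinSet$ and applying $\O$. Associativity follows from the fact that the two composites $\underline{3} \to \underline{2} \to \underline{1}$, folding the first two or the last two elements first, both coincide with the unique map $\underline{3} \to \underline{1}$. Unitality follows from $\nabla \circ (! \sqcup \id) = \id$ on $\underline{1}$ and its mirror. Commutativity follows from $\nabla \circ \tau = \nabla$, where $\tau$ is the swap on $\underline{2}$; since $\O$ is strong symmetric monoidal, $\O(\tau)$ is identified with the braiding $\sigma_{M,M}$ on $M \otimes M$.

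For the converse I would construct, from a commutative monoid $(M, \mu_M, \eta_M)$, a strong monoidal functor $\O_M\colon \FinSet \to \mathcal{C}$ defined on objects by $\underline{n} \mapsto M^{\otimes n}$ and on a morphism $f\colon \underline{m} \to \underline{n}$ by tensoring, over $j \in \underline{n}$, the maps $M^{\otimes |f^{-1}(j)|} \to M$ obtained from iterated $\mu_M$ (or $\eta_M$, if $f^{-1}(j)$ is empty) after choosing some ordering of each fibre. Strong monoidality for disjoint union is immediate from this construction, and functoriality reduces to associativity and unitality of $\mu_M$ together with the fact that composition in $\FinSet$ refines fibre partitions.

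The main obstacle is precisely the ``choice of ordering of each fibre'' clause: morphisms in $\FinSet$ carry no preferred ordering on their fibres, so the assignment on morphisms must be independent of any such choice. This well-definedness is exactly where commutativity of $\mu_M$ is needed, and it cleanly distinguishes this statement from Proposition~\ref{triv-plus-corep-monoids}, in which the ordered-fibre surjections corepresent ordinary (non-commutative) monoids. Once this coherence is settled, the remainder of the equivalence between strong monoidal functors $\FinSet \to \mathcal{C}$ and commutative monoids in $\mathcal{C}$ is a routine verification of commuting diagrams.
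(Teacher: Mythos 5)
The paper does not prove this proposition---it is quoted from Kaufmann's work on Feynman category representations---so there is no in-paper argument to compare against; your proposal is the standard proof of the cited fact and it is correct. It also matches the structural viewpoint the paper emphasizes (singletons are the basic objects, and any $f\colon X\to Y$ factors as the disjoint union of its fiber maps $f^{-1}(y)\to\{y\}$), with commutativity entering exactly where you say, namely in the independence of the chosen fibre orderings. The only loose ends are routine: you should work with strong \emph{symmetric} monoidal functors (as the Feynman-category formalism requires, otherwise $\O(\tau)$ need not be the braiding), extend $\O_M$ from the skeleton $\{\underline{n}\}$ to all finite sets along an equivalence, and, if ``corepresents'' is read as an equivalence of categories, check that monoidal natural transformations correspond to monoid homomorphisms.
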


\begin{prop}[Kaufmann~\cite{feynmanrep}]
  \label{FinSet-plus-corep-operads}
  Combinatorially, $\FinSet^+$ is equivalent to a Borisov--Manin category of graphs with rooted corollas as objects and the generating morphisms have level trees as ghost graphs.
  Moreover, they corepresent operads as Feynman categories.
\end{prop}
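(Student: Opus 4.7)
The plan is to unfold the plus construction applied to $\FinSet$ and then identify the result with the Borisov--Manin level-tree category. First, recall that every $f: X \to Y$ in $\FinSet$ factors canonically as $f = \amalg_{y \in Y} f_y$ with $f_y: f^{-1}(y) \to \{y\}$, so the irreducibles of $\FinSet$ (as a UFC whose basis is the singletons, by Proposition \ref{FinSet-corep-com-monoids}) are precisely the surjections onto singletons. Picturing each such $f_y$ as a rooted corolla with input set $f^{-1}(y)$ and root $y$ identifies irreducibles with rooted corollas.

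Next I would describe the objects and morphisms of $\FinSet^+$ in these terms. An object is a $\boxtimes$-word of morphisms; applying $\mu^{-1}$ together with the UFC equivalence $\jmath^\boxtimes$ shows that every such word is isomorphic to a $\boxtimes$-word of irreducibles, i.e.\ a disjoint union of rooted corollas. The generating morphisms are isomorphisms (permutations of corollas and input/root relabelings) and $\gamma$-morphisms $\gamma_{\phi_1,\phi_0}: \phi_1 \boxtimes \phi_0 \to \phi_1 \circ \phi_0$. On a composable pair of irreducibles, $\gamma$ grafts the root of $\phi_0$ onto the matching leaf of $\phi_1$, producing a two-level tree as ghost graph; iterating $\gamma$'s produces level trees of arbitrary height. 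The hereditary property of $\FinSet$, which is transparent because fiber decompositions are canonical, supplies a right Ore roof calculus so that every morphism admits such a level-tree presentation.

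The main obstacle I foresee is showing that the plus-construction relations (associativity, identities, interchange, and equivariance under isomorphisms) correspond exactly to the Borisov--Manin relations governing when two graph morphisms agree. This is mostly bookkeeping, with the interchange axiom requiring the most care: it should match the relation saying that grafts on disjoint leaves commute, and I would verify this by a direct diagram chase using the hereditary partition of the basis decompositions provided by the hereditary UFC axiom.

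For the corepresentation claim, once the combinatorial equivalence is in hand, a strong monoidal functor $\O: \FinSet^+ \to \mathcal{C}$ is determined by $M(n) := \O(c_n)$ on the $n$-input corolla $c_n$, where the input-relabeling isomorphisms supply the $\SS_n$-action, and the image of each two-level tree supplies the operadic composition maps $M(n) \otimes M(k_1) \otimes \cdots \otimes M(k_n) \to M(k_1 + \cdots + k_n)$. The interchange and associativity relations of $\FinSet^+$ then translate directly into the equivariance, unit, and associativity axioms of an operad, so $\FinSet^+$-ops are exactly operads in $\mathcal{C}$, as claimed.
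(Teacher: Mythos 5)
The paper does not actually prove this proposition: it is quoted from Kaufmann \cite{feynmanrep}, so there is no in-paper argument to compare against, only the illustrative example that follows. Your sketch follows the standard route of that reference and is consistent with that example: the irreducibles of $\FinSet$ are the maps to singletons (rooted corollas), inverting $\mu$ identifies objects of $\FinSet^+$ with aggregates of corollas, the $\gamma$-morphisms supply the two-level-tree generators, and evaluating a strong monoidal functor on corollas and on these generators produces exactly the $\SS_n$-modules and composition maps of an operad, with the plus-construction relations giving equivariance and associativity. Two cosmetic points: the singleton basis for $\FinSet$ is stated in the opening paragraph of the section rather than in Proposition~\ref{FinSet-corep-com-monoids}, and the grafting description should be phrased for $\gamma_{\phi_1,\phi_0}$ with $\phi_0$ a disjoint union of corollas (as in the paper's example $\gamma_{\phi_2,\phi_1\amalg\phi_3}$), not only for a composable pair of irreducibles.
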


\begin{example}
  Let $\phi_n$ denote some $n$-to-$1$ map in $\FinSet$.
  Then the morphisms $\gamma_{\phi_2, \phi_1 \amalg \phi_3} \to \phi_4$ in $\FinSet^+$ can be identified with a morphism in a Borisov--Manin category:
  \begin{equation}
    \begin{tikzpicture}[baseline=(current  bounding  box.center)]
      \node [style=White] (0) at (-4.5, 0) {};
      \node [style=White] (1) at (-3.5, 0) {};
      \node [style=White] (2) at (-2.25, 0) {};
      \node [style=none] (3) at (-4.75, 0.5) {};
      \node [style=none] (4) at (-4.25, 0.5) {};
      \node [style=none] (5) at (-3.5, 0.5) {};
      \node [style=none] (6) at (-2.75, 0.5) {};
      \node [style=none] (7) at (-2.25, 0.5) {};
      \node [style=none] (8) at (-1.75, 0.5) {};
      \node [style=White, inner sep = 0.6mm] (9) at (-0.5, 0.75) {};
      \node [style=White, inner sep = 0.6mm] (10) at (-0.75, 1) {};
      \node [style=White, inner sep = 0.6mm] (11) at (-0.25, 1) {};
      \node [style=none] (14) at (-0.75, 1.25) {};
      \node [style=none] (15) at (-0.5, 1.25) {};
      \node [style=none] (16) at (-0.25, 1.25) {};
      \node [style=none] (17) at (0, 1.25) {};
      \node [style=White] (18) at (1.5, 0) {};
      \node [style=none] (19) at (0.75, 0.5) {};
      \node [style=none] (20) at (1.25, 0.5) {};
      \node [style=none] (21) at (1.75, 0.5) {};
      \node [style=none] (22) at (2.25, 0.5) {};
      \node [style=none] (23) at (-1.25, 0) {};
      \node [style=none] (24) at (0.25, 0) {};
      \node [style=none] (25) at (-4.5, -0.5) {};
      \node [style=none] (26) at (-3.5, -0.5) {};
      \node [style=none] (27) at (-2.25, -0.5) {};
      \node [style=none] (28) at (1.5, -0.5) {};
      \node [style=none] (29) at (-0.5, 0.5) {};
      \draw (3.center) to (0);
      \draw (4.center) to (0);
      \draw (5.center) to (1);
      \draw (6.center) to (2);
      \draw (7.center) to (2);
      \draw (8.center) to (2);
      \draw (14.center) to (10);
      \draw (15.center) to (11);
      \draw (16.center) to (11);
      \draw (17.center) to (11);
      \draw (10) to (9);
      \draw (11) to (9);
      \draw (19.center) to (18);
      \draw (20.center) to (18);
      \draw (21.center) to (18);
      \draw (22.center) to (18);
      \draw [style=Arrow] (23.center) to (24.center);
      \draw (0) to (25.center);
      \draw (1) to (26.center);
      \draw (2) to (27.center);
      \draw (18) to (28.center);
      \draw (9) to (29.center);
    \end{tikzpicture}
  \end{equation}
  The image of $\gamma_{\phi_2, \phi_1 \amalg \phi_3} \to \phi_4$ under a strong monoidal functor $\O: \FinSet^+ \to \mathcal{C}$ is the same thing as an operadic composition $\O(2) \otimes \O(1) \otimes \O(3) \to \O(4)$.
\end{example}

\begin{prop}[Kaufmann~\cite{feynmanrep}]
  Combinatorially, $(\FinSet^{<})^+$ is equivalent to a decorated Borisov--Manin category of graphs with planar rooted corollas as objects and the generating morphisms have level trees as ghost graphs.
  Moreover, they corepresent non-symmetric operads as Feynman categories.
\end{prop}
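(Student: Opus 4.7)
The plan is to parallel the proof strategy of Proposition \ref{FinSet-plus-corep-operads} for $\FinSet^+$, tracking the linear order throughout. First I would verify that $\FinSet^<$ is itself a hereditary UFC: an order-preserving map of finite ordered sets factors canonically (with no ambiguity up to isomorphism) as a monoidal product, in the source order, of order-preserving surjections $[n_i] \to \ast$ onto singletons. Since these irreducibles sit in a canonical order in the image, the hereditary condition for a composition $\phi = \phi_1 \circ \phi_0$ follows from the same fiber analysis as in $\FinSet$, with the extra data of a compatible ordering on each fiber. By the earlier proposition, $(\FinSet^<)^+$ is then a Feynman category.

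Next, I would identify basic objects and basic morphisms combinatorially. Basic objects are identities $\id_{[n]}$, which I represent by planar rooted corollas with leaves labelled $1, \ldots, n$ from left to right. A basic $\gamma$-morphism $\gamma_{\phi_1, \phi_0}: \phi_1 \boxtimes \phi_0 \to \phi_1 \circ \phi_0$, with $\phi_0: [n_1+\cdots+n_m] \to [m]$ the order-preserving surjection with consecutive fibers of sizes $n_1, \ldots, n_m$ and $\phi_1: [m] \to [1]$, corresponds to a two-level planar rooted tree whose root corolla has $m$ inputs and whose top vertices (read left to right) have $n_1, \ldots, n_m$ inputs, respectively. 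Grafting such level trees, subject only to the equivalences generated by order-preserving bijections of vertex labels, reproduces exactly the decorated Borisov--Manin category of planar level trees, so the combinatorial equivalence reduces to matching generators and relations level by level.

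For the corepresentation claim, let $\O: (\FinSet^<)^+ \to \C$ be a strong monoidal functor and set $\O(n) := \O(\id_{[n]})$. The image of $\gamma_{\phi_1,\phi_0}$ as above is a morphism
\begin{equation*}
\O(m) \otimes \O(n_1) \otimes \cdots \otimes \O(n_m) \to \O(n_1+\cdots+n_m),
\end{equation*}
which is precisely a non-symmetric operadic composition. The interchange relation in the plus construction yields the associativity of these compositions, while the identity relations yield the operadic unit axiom. Conversely, any non-symmetric operad defines such a functor by sending a planar level tree to the prescribed iterated composition; the universal property of the plus construction then extends this assignment to a well-defined strong monoidal functor.

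The main obstacle will be verifying that the relations imposed on the plus construction produce \emph{exactly} the axioms of a non-symmetric operad, and no more. The key simplification compared to the $\FinSet^+$ case is that the only isomorphisms in $\FinSet^<$ are identities, so the equivariance relation is vacuous and no symmetric group action is introduced. This is precisely what distinguishes non-symmetric from symmetric operads, and matches the absence of nontrivial automorphisms of a planar rooted corolla on the combinatorial side.
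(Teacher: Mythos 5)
Your overall skeleton (factor $\FinSet^<$-morphisms into one-vertex pieces, read the generating $\gamma$-morphisms as two-level planar trees, extract the non-symmetric compositions from $\gamma$ and the operad axioms from associativity/interchange) is reasonable and parallels how the analogous symmetric statements are handled; note the paper itself only cites Kaufmann for this proposition, so there is no in-text proof to compare against. However, the step you yourself single out as the crucial one is where your argument breaks. In this paper's conventions the superscript $<$ means ``morphisms with an ordering on each fiber'' (see the definition of $\Surj^<$ and the proposition identifying $(\FF^{triv})^+$ with $\mathcal{S}urj^<$), so $\FinSet^<$ has plain finite sets as objects and its isomorphisms are \emph{all} bijections, not just identities: a bijection has singleton fibers, each with a unique order. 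Hence the claim that ``the only isomorphisms in $\FinSet^<$ are identities, so the equivariance relation is vacuous'' is false. The correct mechanism is rigidity, not absence of isomorphisms: a basic morphism is a map $\underline{n} \to \underline{1}$ together with a total order on its fiber, and any automorphism $(\sigma \Downarrow \id)$ in the plus construction must preserve that order, hence $\sigma = \id$; moreover any two choices of fiber order are related by a unique such isomorphism. So the groupoid of arity-$n$ basic objects is contractible (a planar corolla with trivial automorphism group) rather than equivalent to $\SS_n$, and this is exactly what replaces symmetric by non-symmetric operads --- the same rigidity phenomenon the paper exploits for $\mathcal{YT}ableaux$.

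Your alternative reading of $\FinSet^<$ as finite ordered sets with order-preserving maps does not repair this within the framework you invoke: with ordinal sum that category is monoidal but \emph{not} symmetric monoidal (the block transposition is not order-preserving, and taking the identity as the would-be symmetry fails naturality), whereas the definition of a UFC and the plus construction used here require a symmetric monoidal $\M$, the free symmetric monoidal $\boxtimes$, and the commutativity constraints that generate the isomorphism-type morphisms of $\M^{\boxtimes+}$. So you must either work with the ordered-fiber model and redo the isomorphism analysis as above, or construct a genuinely non-symmetric variant of the plus construction, which the cited machinery does not provide. Once the groupoid of basic objects is identified correctly, the remainder of your sketch (hereditary factorization, level trees, and the matching of $\gamma$-relations with the non-symmetric operad axioms) goes through as you describe.
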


\subsection{Operads with multiplication}
\label{sec:operad-mult}

We will show how to obtain a Feynman category which corepresents operads with multiplication by using a slight modification of the plus construction.
Rather than shift the focus from a particular example to a general construction, we will simply introduce this modification as an ad hoc construction.

\begin{prop}
  Define $\operads_{\pi}$ by starting with $\FinSet^+$ and allowing words of morphisms of the form $(\sigma_1 \Downarrow \pi_1) \boxtimes \ldots \boxtimes (\sigma_n\Downarrow \pi_n)$ where $\sigma_i$ are isomorphisms and $\pi_i$ are surjections.
  Then $\operads_\pi$ is a Feynman category that corepresents operads with multiplication.
\end{prop}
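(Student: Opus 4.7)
The plan is to proceed in three stages: first identify the combinatorial meaning of the new surjection-based generators, then verify that $\operads_\pi$ satisfies the Feynman category axioms, and finally construct the correspondence between $\operads_\pi$-ops and operads with multiplication.

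First, I would observe that $\operads_\pi$ contains $\FinSet^+$ as a monoidal subcategory via the inclusion that restricts the $\pi_i$ to be isomorphisms. Hence by Proposition~\ref{FinSet-plus-corep-operads}, every $\operads_\pi$-op $\O: \operads_\pi \to \mathcal{C}$ restricts to an operad, and the combinatorial model of $\FinSet^+$ as a Borisov--Manin category of rooted corollas and level trees supplies the core structure. The new basic morphisms of the form $(\id \Downarrow \pi)$ for a surjection $\pi: Y \to Y'$ produce, when $\phi: X \to Y$ is a rooted corolla representative and $\phi' = \pi \circ \phi$, a morphism in $\mathcal{C}$ of shape $\bigotimes_{y \in Y} \O(\phi^{-1}(y)) \to \bigotimes_{y' \in Y'} \O(\phi'^{-1}(y'))$. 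Specializing $\pi$ to a map $n \to 1$ gives maps $\O(X_1) \otimes \cdots \otimes \O(X_n) \to \O(X_1 \amalg \cdots \amalg X_n)$, which I would identify with the data of a morphism of operads from an associative operad to $\O$, i.e.\ a multiplication on $\O$.

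Next, I would verify that $\operads_\pi$ is a Feynman category. The basis of objects $\V$ is inherited directly from $\FinSet^+$: the corolla $\id_{\{*\}}$, together with its free monoidal extension. The basis of morphisms is augmented by the new $(\id \Downarrow \pi)$ generators, and I would check hereditariness of this enlarged basis. Every composable pair in $\operads_\pi$ can be rewritten (using the relations of the plus construction and the equivariance with respect to isomorphisms) as a zig-zag of level-tree $\gamma$-morphisms and surjection generators; the hereditary decomposition then reduces to the fact that surjections in $\FinSet$ can themselves be decomposed vertex-by-vertex along any ghost graph, mirroring the standard hereditary argument for $\FinSet^+$.

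Finally, I would construct the inverse correspondence. Given an operad $\O$ with multiplication (a map $\mathrm{Assoc} \to \O$, equivalently an associative element $\mu \in \O(2)$ together with a unit), I would define a strong monoidal functor $\operads_\pi \to \mathcal{C}$ by sending the $\FinSet^+$-generators to the operadic composition maps as in Proposition~\ref{FinSet-plus-corep-operads}, and sending each surjection generator $(\id \Downarrow \pi)$ to the multiplication map obtained by iterating $\mu$ along the fibers of $\pi$. The main obstacle here is checking that all relations of $\operads_\pi$ are satisfied: the associativity and equivariance of $\mu$ handle the relations internal to the surjection generators, and the interchange-type relations between $(\id \Downarrow \pi)$ and the $\gamma$-morphisms translate precisely into the compatibility between operadic multiplication and operadic composition that is the defining property of an operad with multiplication. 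Once this is in place, the two constructions are mutually inverse by direct inspection on generators, establishing the corepresentation.
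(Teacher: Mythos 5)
Your verification that $\operads_\pi$ is a Feynman category is essentially the paper's argument: the paper factors each added generator $(\sigma \Downarrow \pi)$ as a word $(\sigma_1 \Downarrow t_1) \boxtimes \ldots \boxtimes (\sigma_n \Downarrow t_n)$ where the $t_i$ are surjections onto singletons, so that each factor has a single corolla as its target; your ``decompose the surjection vertex-by-vertex'' step is the same idea. One small correction: the basis of objects inherited from $\FinSet^+$ consists of \emph{all} corollas $\underline{n} \to \underline{1}$ (the one-vertex objects), not just $\id_{\{\ast\}}$ and its monoidal powers.

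Where you diverge is the corepresentation step, and there is a genuine gap there. The paper reads the structure off directly: restriction along $\FinSet^+ \subset \operads_\pi$ gives the operad, and the new generators give the maps $\O(\phi_n) \otimes \O(\phi_m) \cong \O(\phi_n \amalg \phi_m) \to \O(\phi_{n+m})$, which are taken \emph{as} the multiplication datum. You instead identify the new generators with an operad map $\mathrm{Assoc} \to \O$, i.e.\ a single associative $\mu \in \O(2)$ with unit, and build the inverse by iterating $\mu$ along fibers of $\pi$. That identification is precisely what needs proof and is not automatic: to extract $\mu$ from a functor $\O$ you must evaluate the binary generator $\O(\id_{pt} \amalg \id_{pt}) \to \O(\phi_2)$ against an operadic unit $1_{\mathcal{C}} \to \O(1)$, which is not obviously part of the structure corepresented by $\FinSet^+$ and at least has to be exhibited; and to see that every higher map $\O(n) \otimes \O(m) \to \O(n+m)$ is forced to equal $\gamma(\mu; -, -)$ you must invoke the naturality of $(\id \Downarrow \pi)$ against the $\gamma$-morphism of the composable pair $(\id_{\underline{2}}, \phi_n \amalg \phi_m)$ and then cancel the resulting $\O(1)$-factors using unitality. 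Without these steps your closing claim that the two constructions are ``mutually inverse by direct inspection'' does not go through, and in a non-unital setting the two formulations of ``operad with multiplication'' genuinely differ. The paper's terse proof sidesteps all of this by defining the multiplication to be the graded family of maps rather than a single element of $\O(2)$.
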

\begin{proof}
  The added actions $(\sigma \Downarrow \pi)$ can be factored as $(\sigma_1 \Downarrow t_1) \boxtimes \ldots \boxtimes (\sigma_n \Downarrow t_n)$ where $t_i$ are surjections with singletons as codomains.
  The source of $(\sigma_n \Downarrow t_n)$ is an aggregate of corollas and the target is a single corolla.
  Hence the new morphisms meet the necessary conditions, so $\operads_{\pi}$ is indeed a Feynman category.
  
  Let $\O: \operads_\pi \to \mathcal{C}$ be a strong monoidal functor.
  Since $\operads \simeq \FinSet^+$ is present as a subcategory, we still have the $\SS_n$-actions and operadic compositions:
  \begin{equation}
    \begin{tikzcd}
      \O(\phi_{n_1}) \otimes \ldots \otimes \O(\phi_{n_k}) \otimes \O(\phi_k) \arrow[r, "\gamma"] & \O\left(\phi_{\sum_i n_i} \right)
    \end{tikzcd}
  \end{equation}
  On the other hand, the new actions introduce a multiplication:
  \begin{equation}
    \begin{tikzcd}
      \O(\phi_n) \otimes \O(\phi_m) \arrow[r, "\sim"] & \O(\phi_n \amalg \phi_m) \arrow[r, "\O(Id \Downarrow \pi)"] & \O(\phi_{n+m})
    \end{tikzcd}  
  \end{equation}
  Therefore $\O$ is the same data as an operad with multiplication.
\end{proof}

\begin{cor}
  Pulling back along the inclusion $\iota: \FinSet^+ \to \operads_{\pi}$ forgets the multiplication structure.
  \qed
\end{cor}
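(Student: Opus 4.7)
The plan is to unwind the construction in the proof of the previous proposition and observe that the correspondence between $\operads_\pi$-ops and operads with multiplication decouples naturally along the inclusion $\iota$. Given a strong monoidal functor $\O\colon \operads_\pi \to \mathcal{C}$, its pullback $\iota^*\O = \O\circ \iota$ is a strong monoidal functor $\FinSet^+ \to \mathcal{C}$. By Proposition~\ref{FinSet-plus-corep-operads}, such functors are the same data as operads in $\mathcal{C}$, so I only need to check that the underlying operad extracted from $\iota^*\O$ is exactly the operad underlying the operad-with-multiplication $\O$.

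The key observation is the one already used in the proof above: the $\SS_n$-actions and the operadic composition morphisms $\gamma\colon \O(\phi_{n_1})\otimes\cdots\otimes\O(\phi_{n_k})\otimes\O(\phi_k)\to \O(\phi_{\sum_i n_i})$ are defined purely from the images of the isomorphism generators and the $\gamma$-generators of $\FinSet^+$, all of which lie in the image of $\iota$. Hence these structure maps are preserved under $\iota^*$. Conversely, the multiplication map $\O(\phi_n)\otimes \O(\phi_m)\to \O(\phi_{n+m})$ was built from the image of the new generator $\O(Id\Downarrow \pi)$ for a nontrivial surjection $\pi$; such a generator is by construction not in the image of $\iota$, so the multiplication is genuinely discarded upon restriction. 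Together these show that $\iota^*$ sends an operad with multiplication to its underlying operad.

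There is essentially no obstacle here beyond bookkeeping: one just has to make sure that no multiplication can be reconstructed from $\FinSet^+$-data alone. This follows because every composite in $\FinSet^+$ of isomorphism and $\gamma$-generators lands in the subcategory $\FinSet^+\subset \operads_\pi$, and within $\FinSet^+$ no generator implements the $\phi_n\amalg\phi_m \to \phi_{n+m}$ merging that produced the multiplication. Thus $\iota^*$ is precisely the forgetful functor from operads with multiplication to operads.
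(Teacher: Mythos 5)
Your proposal is correct and matches the paper's (implicit) argument: the paper marks this corollary as immediate from the preceding proposition, and your write-up simply spells out that same unwinding, namely that the $\SS_n$-actions and $\gamma$-compositions lie in the image of $\iota$ while the multiplication comes only from the added generators $(\mathrm{Id}\Downarrow\pi)$. No gaps; the extra remark that no merging morphism $\phi_n\amalg\phi_m\to\phi_{n+m}$ exists inside $\FinSet^+$ is harmless bookkeeping beyond what the paper records.
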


\begin{example}
  Let $\mathsf{FinSet}$ denote the skeletal category with the sets $\underline{n} = \{1, \ldots, n\}$ as objects and functions as morphism.
  Then define $\operads_{\pi}^{skel}$ in a similar manner by starting with $\mathsf{FinSet}^+$ and allowing words of the form $(\sigma_1 \Downarrow \pi_1) \boxtimes \ldots \boxtimes (\sigma_n\Downarrow \pi_n)$.
  Fix an operad $\O: \mathsf{FinSet}^+ \to \Vect$ and write $\O(n) = \O(\underline{n} \to \underline{1})$ as short hand.
  Now define a strong monoidal functor $\O^{nc}: \operads_{\pi}^{skel} \to \Vect$ on basic objects by
  \begin{equation}
    \O^{nc}(n) = \bigoplus_{n = \sum_{i=1}^k n_i}
    \O(n_1) \otimes \ldots \otimes \O(n_k)
  \end{equation}
  \begin{enumerate}
  \item The operadic composition in $\O^{nc}$ is given by summing over all possible operadic compositions in $\O$.
  \item The multiplication is defined by commutativity of finite colimits and tensors followed by inclusion:
    \begin{equation}
      \begin{tikzcd}
        \left(
          \displaystyle \bigoplus_{n = \sum_{i=1}^N n_i}
          \O(n_1) \otimes \ldots \otimes \O(n_N)
        \right)
        \otimes
        \left(
          \displaystyle \bigoplus_{m = \sum_{j=1}^M m_j}
          \O(m_1) \otimes \ldots \otimes \O(m_M)
        \right)
        \arrow[d, "\sim"] \\
        \displaystyle \bigoplus_{n = \sum_{i=1}^N n_i}
        \displaystyle \bigoplus_{m = \sum_{j=1}^M m_j}
        \O(n_1) \otimes \ldots \otimes \O(n_N)
        \otimes
        \O(m_1) \otimes \ldots \otimes \O(m_M)
        \arrow[d, tail] \\
        \displaystyle \bigoplus_{m+n = \sum_{k=1}^P p_k}
        \O(p_1) \otimes \ldots \otimes \O(p_P)
      \end{tikzcd}  
    \end{equation}
  \end{enumerate}
\end{example}

\subsection{Young tableaux}

We can think of a surjection $p: E \ta B$ as a $B$-indexed partition of a set $E$ by taking fibers $\{ p^{-1}(b) : b \in B \}$.
In the representation theory of symmetric groups, partitions are encoded by Young tableaux/tabloids.
In this section, we will look at this structure from the point of view of the plus construction.

\subsubsection{Tabloids and Tableaux as nc-plus constructions}

\begin{definition}
  To match existing conventions, we will work with the following skeletal categories:
  \begin{enumerate}
  \item Let $\Surj$ be the subcategory of $\FinSet$ whose objects are sets $\underline n = \{1, \ldots, n\}$ and whose morphisms are surjections.
  \item Let $\Surj^{<}$ be the category with sets $\underline n = \{1, \ldots, n\}$ as the objects and surjections with an ordering on each fiber as the morphisms.
  \end{enumerate}
\end{definition}

\begin{example}[Young tabloids]
  Consider \eqref{eq:tabloid}, we will think of an object $f$ of $\Surj^{nc+}$ on the left as the diagram on the right where the rows are the fibers of $f$.
  Since the fibers are not ordered, the corresponding rows are not ordered either which gives us a Young tabloid.
  \begin{equation}
    \label{eq:tabloid}
    \begin{tikzpicture}[baseline={(current bounding box.center)}]
      \node [style=none] (0) at (1.75, 0.5) {$1$};
      \node [style=none] (1) at (-0.75, 0.5) {$2$};
      \node [style=none] (2) at (0.75, 0.5) {$3$};
      \node [style=none] (3) at (-1.75, 0.5) {$4$};
      \node [style=none] (4) at (1.25, 0.5) {$5$};
      \node [style=none] (5) at (-0.25, 0.5) {$6$};
      \node [style=none] (6) at (0.25, 0.5) {$7$};
      \node [style=none] (7) at (-1.25, 0.5) {$8$};
      \node [style=none] (8) at (-1.5, -0.5) {$1$};
      \node [style=none] (9) at (-0.25, -0.5) {$2$};
      \node [style=none] (10) at (1, -0.5) {$3$};
      \node [style=none] (11) at (1.75, -0.5) {$4$};
      \draw (3) to (8);
      \draw (7) to (8);
      \draw (5) to (9);
      \draw (6) to (9);
      \draw (1) to (9);
      \draw (2) to (10);
      \draw (0) to (11);
      \draw (4) to (10);
    \end{tikzpicture}
    \hspace{2cm}
    \ytableausetup{boxsize=normal, tabloids}
    \ytableaushort{
      48,672,53,1
    }
  \end{equation}

  Under this interpretation, precomposing by a surjection lengthens the rows and postcomposing by an injection adds rows.
  One place where adding columns and rows appears naturally is the branching rules for representations of the symmetric group.
\end{example}

\begin{example}[Young tableaux]
  Like before, we will think of an object of $(\Surj^<)^{nc+}$ as a diagram like the one in \eqref{eq:tablueax}.
  In this case, the fibers are ordered, so the diagram is a Young tableau.
  \begin{equation}
    \label{eq:tablueax}
    \begin{tikzpicture}[baseline={(current bounding box.center)}]
      \node [style=none] (0) at (1.75, 0.5) {$1$};
      \node [style=none] (1) at (-0.75, 0.5) {$6$};
      \node [style=none] (2) at (1.25, 0.5) {$3$};
      \node [style=none] (3) at (-1.75, 0.5) {$4$};
      \node [style=none] (4) at (0.75, 0.5) {$5$};
      \node [style=none] (5) at (-0.25, 0.5) {$7$};
      \node [style=none] (6) at (0.25, 0.5) {$2$};
      \node [style=none] (7) at (-1.25, 0.5) {$8$};
      \node [style=none] (8) at (-1.5, -0.5) {$1$};
      \node [style=none] (9) at (-0.25, -0.5) {$2$};
      \node [style=none] (10) at (1, -0.5) {$3$};
      \node [style=none] (11) at (1.75, -0.5) {$4$};
      \node [style=none] (12) at (-1.75, 1) {};
      \node [style=none] (13) at (-1.25, 1) {};
      \node [style=none] (14) at (-0.75, 1) {};
      \node [style=none] (15) at (0.25, 1) {};
      \node [style=none] (16) at (0.75, 1) {};
      \node [style=none] (17) at (1.25, 1) {};
      \draw (3) to (8);
      \draw (7) to (8);
      \draw (5) to (9);
      \draw (6) to (9);
      \draw (1) to (9);
      \draw (2) to (10);
      \draw (0) to (11);
      \draw (4) to (10);
      \draw [style=Arrow] (12.center) to (13.center);
      \draw [style=Arrow] (14.center) to (15.center);
      \draw [style=Arrow] (16.center) to (17.center);
    \end{tikzpicture}
    \hspace{2cm}
    \ytableausetup{notabloids}
    \begin{ytableau}
      4 & 8 \\
      6 & 7 & 2 \\
      5 & 3 \\
      1
    \end{ytableau}
  \end{equation}
\end{example}

\subsubsection{Descending row condition}

Typically, Young tableaux are written so that the length of the rows decreases from top to bottom.
Consequently, the following tableaux are considered distinct:
\begin{equation}
  \ytableausetup{notabloids}
  \begin{ytableau}
    1 & 2 \\
    3 & 4 \\
    5
  \end{ytableau}
  \hspace{2cm}
  \ytableausetup{notabloids}
  \begin{ytableau}
    3 & 4 \\
    1 & 2 \\
    5
  \end{ytableau}
\end{equation}
We will formulate this condition in terms of the underlying groupoid of $(\Surj^{<})^{nc+}$ and $(\Surj)^{nc+}$.

\begin{definition}
  Given a function $f: X \to Y$ in the category of finite sets, we will say the automorphism $\sigma: Y \to Y$ is \emph{rigid} if $|f^{-1}(y)| = |(\sigma \circ f)^{-1}(y)|$ implies that $\sigma(y) = y$.
  \begin{enumerate}
  \item Let $\mathcal{YT}ableaux$ be the subcategory of $(\Surj^{<})^{nc+}$ whose morphisms are isomorphism $(\sigma \Downarrow \sigma')$ such that $\sigma'$ is rigid.
  \item Let $\mathcal{YT}abloids$ be the subcategory of $(\Surj)^{nc+}$ whose morphisms are isomorphism $(\sigma \Downarrow \sigma')$ such that $\sigma'$ is rigid.
  \end{enumerate}
\end{definition}

\begin{example}
  The descending row condition can be understood as restricting to these subgroupoids.
  For example, the left morphism of \eqref{eq:rigid-and-not} which exchanges the first and second row is in $\mathcal{YT}ableaux$.
  However, the right morphism of \eqref{eq:rigid-and-not} which exchanges the first and third row is not in $\mathcal{YT}ableaux$.
  \begin{equation}
    \label{eq:rigid-and-not}
    \ytableausetup{notabloids}
    \begin{ytableau}
      6 & 7 & 2 \\
      4 & 8 \\
      5 & 3 \\
      1
    \end{ytableau}
    \longleftarrow
    \begin{ytableau}
      4 & 8 \\
      6 & 7 & 2 \\
      5 & 3 \\
      1
    \end{ytableau}
    \longrightarrow
    \begin{ytableau}
      5 & 3 \\
      6 & 7 & 2 \\
      4 & 8 \\
      1
    \end{ytableau}
  \end{equation}

\end{example}

\begin{prop}
  The automorphism group of a particular Young tableau in $\mathcal{YT}ableaux$ is trivial.
\end{prop}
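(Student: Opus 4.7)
The plan is to unpack the definition of an automorphism in $\mathcal{YT}ableaux$ and then use the rigidity condition together with the order-preservation on fibers to force both components of the automorphism to be trivial. Let $f: X \to Y$ with ordered fibers represent the Young tableau in question. By definition, an automorphism of $f$ in $(\Surj^{<})^{nc+}$ is an isomorphism $(\sigma \Downarrow \sigma')$ consisting of bijections $\sigma: X \to X$ and $\sigma': Y \to Y$ such that $\sigma' \circ f = f \circ \sigma$ and such that $\sigma$ preserves the orderings on fibers (since $\Surj^{<}$ records the ordering and $\sigma$ is a morphism there).

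First I would observe that the commuting square $\sigma' \circ f = f \circ \sigma$ forces $\sigma$ to restrict to an order-preserving bijection $f^{-1}(y) \to f^{-1}(\sigma'(y))$ for every $y \in Y$. In particular, $|f^{-1}(y)| = |f^{-1}(\sigma'(y))|$ for all $y$, or equivalently $|f^{-1}(y)| = |f^{-1}((\sigma')^{-1}(y))| = |(\sigma' \circ f)^{-1}(y)|$ for all $y \in Y$. The hypothesis of the rigidity condition on $\sigma'$ is therefore satisfied for every $y$, and the rigidity conclusion forces $\sigma'(y) = y$ for all $y$. Hence $\sigma' = \id_Y$.

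With $\sigma' = \id_Y$, the commuting square becomes $f \circ \sigma = f$, so $\sigma$ preserves each fiber, i.e., $\sigma|_{f^{-1}(y)}$ is a bijection $f^{-1}(y) \to f^{-1}(y)$. Since $\sigma$ also preserves the total order on $f^{-1}(y)$ coming from the $\Surj^{<}$-structure, and the only order-preserving bijection of a finite totally ordered set is the identity, $\sigma$ is the identity on each fiber. As the fibers cover $X$, we conclude $\sigma = \id_X$, proving the claim.

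There is no real obstacle here, once one notices that the rigidity hypothesis is automatically satisfied by any member of the commutative square: the bulk of the work is simply tracing through the definitions and observing that the order-preservation coming from $\Surj^{<}$ combines cleanly with rigidity. The only point requiring a modicum of care is the direction of the rigidity inequality: one must remember that $(\sigma' \circ f)^{-1}(y) = f^{-1}((\sigma')^{-1}(y))$ and that this, by the commutativity of the square, has the same cardinality as $f^{-1}(y)$, so rigidity applies with no side condition.
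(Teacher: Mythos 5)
Your proof is correct and follows essentially the same route as the paper: use the commuting square to see that $|(\sigma'\circ f)^{-1}(y)| = |f^{-1}(y)|$ for every $y$, invoke rigidity to conclude $\sigma' = \id$, and then use order-preservation on the (now fixed) fibers to force $\sigma = \id$. Your write-up merely spells out the cardinality bookkeeping and the fiberwise order argument in more detail than the paper does.
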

\begin{proof}
  Let $(\sigma \Downarrow \sigma'): f \to f$ be an automorphism in $\mathcal{YT}ableaux$.
  This implies that the following diagram commutes:
  \begin{equation}
    \begin{tikzcd}
      X \arrow[r, "f"] \arrow[d, "\sigma"]
      & Y \arrow[d, "\sigma'"] \\
      X \arrow[r, "f"]
      & Y
    \end{tikzcd}
  \end{equation}
  For each $y \in Y$, the isomorphism $\sigma$ maps the fiber $(\sigma' \circ f)^{-1}(y)$ bijectively onto the fiber $f^{-1}(y)$.
  Hence $\sigma' = \id$ since $\sigma'$ is assumed to be rigid.
  Moreover, $\sigma$ must be an identity to preserve the order on the fibers.
\end{proof}

\begin{cor}
  Each tableau in $\mathcal{YT}ableaux$ is uniquely isomorphic to a tableau where the width of the rows decreases from top to bottom.
\end{cor}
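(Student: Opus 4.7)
The plan is to split the corollary into existence and uniqueness: existence is a direct construction of a rigid isomorphism to a sorted tableau, and uniqueness is immediate from the previous proposition.

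For existence, given a tableau $f\colon X \to Y$ in $\mathcal{YT}ableaux$, I partition $Y$ by fibre size, setting $I_s = \{y \in Y : |f^{-1}(y)| = s\}$ for each occurring $s$. Let $J_s \subseteq Y$ denote the block of $|I_s|$ consecutive positions designated to hold size-$s$ rows in the decreasing-width shape (so $J_s$ precedes $J_{s'}$ whenever $s > s'$). I define a permutation $\sigma'\colon Y \to Y$ by declaring $\sigma'(y) = y$ for every $y \in I_s \cap J_s$ and choosing any bijection $I_s \setminus J_s \to J_s \setminus I_s$ on the remaining positions; such a bijection exists since $|I_s \setminus J_s| = |J_s \setminus I_s|$. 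Set $g := \sigma' \circ f$, which by construction has rows in decreasing size, and take $(\id_X \Downarrow \sigma')\colon f \to g$ as the candidate isomorphism.

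To confirm this is a morphism of $\mathcal{YT}ableaux$, I verify that $\sigma'$ is rigid with respect to $f$. Since $\sigma'$ maps $I_s$ bijectively onto $J_s$, the preimage $\sigma'^{-1}(y)$ lies in $I_s$ exactly when $y \in J_s$, so $|f^{-1}(\sigma'^{-1}(y))|$ equals the size attached to the $J$-block containing $y$. Together with $|f^{-1}(y)| = s$ iff $y \in I_s$, the rigidity hypothesis $|f^{-1}(y)| = |f^{-1}(\sigma'^{-1}(y))|$ reduces to $y \in I_s \cap J_s$, on which $\sigma'(y) = y$ by construction. Uniqueness then follows immediately: two morphisms $(\sigma_1 \Downarrow \sigma'_1), (\sigma_2 \Downarrow \sigma'_2)\colon f \to g$ in $\mathcal{YT}ableaux$ differ by an automorphism of $g$ in $\mathcal{YT}ableaux$, which must be the identity by the previous proposition.

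The main obstacle is the existence step: showing that $\sigma'$ can always be chosen rigidly, even when $I_s$ and $J_s$ overlap non-trivially for several sizes simultaneously. The resolution is that rigidity binds $\sigma'$ only on $I_s \cap J_s$, where the identity is forced; on the complementary sets $I_s \setminus J_s$ and $J_s \setminus I_s$ any bijection satisfies rigidity vacuously, so a rigid sort permutation can always be produced.
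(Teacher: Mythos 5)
Your proof is correct and follows essentially the same route as the paper: produce a rigid isomorphism onto the row-sorted tableau and then invoke the preceding proposition on trivial automorphism groups for uniqueness. The only difference is cosmetic --- the paper obtains the sorting isomorphism by repeatedly transposing adjacent rows of unequal width, whereas you build the sorting permutation $\sigma'$ in a single step and check rigidity explicitly on the overlaps $I_s \cap J_s$, which is if anything more careful than the paper's sketch.
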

\begin{proof}
  By repeatedly transposing a wider row below with a narrower row above, we obtain a morphism from a particular tableau to a tableau with rows of decreasing length.
  By the previous proposition, this is necessarily unique.
\end{proof}

\section{Cospans}

We will show how cospans are connected to graphs, properads, and Frobenius algebras.

\begin{definition}
  Define the category $\Cospan$ so that:
  \begin{enumerate}
  \item The objects are finite sets.
  \item The morphisms are diagrams $S \rightarrow V \leftarrow T$ called \emph{cospans} modulo isomorphisms in the middle:
    \begin{equation*}
      \begin{tikzcd}[row sep = small]
        & V \arrow[dd, "\sim"] & \\
        S \arrow[ru] \arrow[rd] & & T \arrow[lu] \arrow[ld] \\
        & V' &                        
      \end{tikzcd}
    \end{equation*}
  \item Define composition by taking a pushout in the middle of a pair of cospans:
    \begin{equation*}
      \begin{tikzcd}[row sep = small]
        && {V \mathbin{_b\amalg_c} V'} \\
        &  V && {V'} \\
        X  && Y && Z
        \arrow["a", from=3-1, to=2-2]
        \arrow["b"', from=3-3, to=2-2]
        \arrow["{i_b}", dashed, from=2-2, to=1-3]
        \arrow["{i_c}"', dashed, from=2-4, to=1-3]
        \arrow["\lrcorner"{anchor=center, pos=0.125, rotate=-45}, draw=none, from=1-3, to=3-3]
        \arrow["c", from=3-3, to=2-4]
        \arrow["d"', from=3-5, to=2-4]
      \end{tikzcd}
    \end{equation*}
  \end{enumerate}
\end{definition}

\subsection{Zero-to-zero morphisms}

One subtlety of cospans is the existence of non-trivial ``zero-to-zero morphisms'' such as $\varnothing \rightarrow pt \leftarrow \varnothing$.
We will discuss how these morphisms differ qualitatively from the others and survey different ways of working with them.

\subsubsection{Subcategories}

In some contexts, the nontrivial zero-to-zero morphisms may not be appropriate.
In these cases, we can safely disregard them by considering different subcategories.

\begin{definition}
  \label{non-unital-cospans}
  For a cospan $S \rightarrow V \leftarrow T$, we refer to $S \rightarrow V$ as the left leg and $V \leftarrow T$ the right leg.
  \begin{enumerate}
  \item Define the subcategory ${}'\Cospan$ of \emph{non--unital cospans} to be the subcategory where the left legs $S \rightarrow V$ are surjective.
  \item Define the subcategory $\Cospan'$ of \emph{non--co--unital cospans} to be the subcategory where the right legs $V \leftarrow T$ are surjective.
  \item Define ${}'\Cospan'$ to be the subcategory where both legs $S \rightarrow V$ and $V \leftarrow T$ are surjective.
  \end{enumerate}
  Because pushouts preserve epimorphisms, these all form categories.
\end{definition}

\begin{example}
  Cospans of the form $S \rightarrow \{\ast\} \leftarrow T$ fail to form a subcategory of $\Cospan$.
  For example, the composition of $S \rightarrow \{\ast\} \leftarrow \varnothing$ and $\varnothing \rightarrow \{\ast\} \leftarrow T$ yields $S \rightarrow \{\ast\} \amalg \{\ast\} \leftarrow T$.
  However, in the cospan categories of Definition~\ref{non-unital-cospans}, the composition of $S \rightarrow \{\ast\} \leftarrow T$ and $T \rightarrow \{\ast\} \leftarrow U$ always yields $S \rightarrow \{\ast\} \leftarrow U$ because of the surjectivity requirements on one or both legs.
\end{example}

\subsubsection{Factorization and lone morphisms}

When the zero-to-zero morphisms are present, the notion of factorization needs a slight modification.
For example, the cospan $\varnothing \rightarrow V \leftarrow \varnothing$ factors as $\coprod_{v \in V}(\varnothing \rightarrow \{v\} \leftarrow \varnothing)$ with $\varnothing \rightarrow \{\ast\} \leftarrow \varnothing$ as the generating morphism.
However, if we consider this cospan as an object of $\Iso(\Cospan \downarrow \Cospan)$, the automorphism group of $\varnothing \rightarrow V \leftarrow \varnothing$ is trivial rather than $\mathrm{Aut}(V)$.
This means the first condition of a UFC does not quite capture the factorization, but it can with a simple modification.

\begin{nota}
  Consider $(\N, +)$ as a discrete monoidal category and let $\N[I]$ denote the monoidal subcategory of $\N^I$ where the objects have finite support.
\end{nota}

\begin{definition}
  Augment the data of a unique factorization category with a set $L$ and an injection $L \rightarrowtail \Hom(1_{\M}, 1_{\M})$.
  Using the monoidal structure of $\M$ and the inclusion $\Iso(1_{\M} \downarrow 1_{\M}) \hookrightarrow \Iso(\M \downarrow \M)$, these induce the following functor.
  \begin{equation}
    \jmath^{\boxtimes} \times l: \Indec^{\boxtimes} \times \N[L] \to \Iso(\M \downarrow \M)
  \end{equation}
  We can think of the image of $L \rightarrowtail \Hom(1_{\M}, 1_{\M})$ as the irreducible \emph{lone morphisms}.
\end{definition}

\begin{example}
  \label{ex:cospan-Eckmann-Hilton}
  For $\Cospan$, take $L = \{ \ast \}$ and define $L \rightarrowtail \Hom(\varnothing, \varnothing)$ so that $\ast$ is assigned to $\varnothing \rightarrow \{\ast\} \leftarrow \varnothing$.
  By the Eckmann--Hilton argument, the only way these lone morphisms can compose is by accumulating.
  For example in $\Cospan$, the composition of $\varnothing \rightarrow \{1\} \leftarrow \varnothing$ and $\varnothing \rightarrow \{2\} \leftarrow \varnothing$ is the cospan $\varnothing \rightarrow \{1\} \amalg \{2\} \leftarrow \varnothing$.
\end{example}

\subsubsection{Corelations}

In applied situations, a morphism in a cospan category is thought of as a process where the source represents the inputs and the target represents outputs.
Under this interpretation, a process with zero inputs and zero outputs should be disregarded whenever they occur.
This is formalized by corelations.

\begin{definition}[Fong--Zanasi~\cite{co-relation}]
  Define the \emph{(first) category of corelations} $\Corel_{I}$ as follows:
  \begin{enumerate}
  \item The objects are sets.
  \item The morphisms are isomorphism classes of jointly surjective cospans.
    In other words, cospans $S \rightarrow V \leftarrow T$ such that the induced map $S \amalg T \to V$ is a surjection.
  \item In general, a composition of two jointly surjective cospans might compose to some $S \amalg T \to V$ that is not jointly surjective.
    However, we can restrict the codomain to get a surjection $S \amalg T \ta V'$.
    We take this to be the composition in $\Corel_I$.
  \end{enumerate}
\end{definition}

\begin{definition}[Fong--Zanasi~\cite{co-relation}]
  Let $\mathcal M$ denote the collection of injections.
  Define the \emph{(second) category of corelations} $\Corel_{II}$ so that the morphisms are equivalence classes of cospans where two cospans are considered equivalent if there is a zig-zag of morphisms in $\mathcal M$ connecting them:
  \begin{equation}
    \begin{tikzcd}
      S \arrow[r, equals] \arrow[d]
      & S \arrow[r, equals] \arrow[d]
      & \ldots \arrow[r, equals]
      & S \arrow[r, equals] \arrow[d]
      & S \arrow[d] \\
      V_1 \arrow[r, "\in \mathcal M"]
      & V_2
      & \ldots \arrow[l, "\in \mathcal M"'] \arrow[r, "\in \mathcal M"]
      & V_{n-1}
      & V_n \arrow[l, "\in \mathcal M"'] \\
      T \arrow[r, equals] \arrow[u]
      & T \arrow[r, equals] \arrow[u]
      & \ldots \arrow[r, equals]
      & T \arrow[r, equals] \arrow[u]
      & T \arrow[u]
    \end{tikzcd}
  \end{equation}
\end{definition}

\begin{example}
  The span $S \overset{l}{\rightarrow} V \overset{r}{\leftarrow} T$ is equivalent to a span $S \rightarrow im(l) \cup im(r) \leftarrow T$ without any ``lone vertices'' by way of the injection $im(l) \cup im(r) \hookrightarrow V$.
\end{example}

\begin{prop}[Fong--Zanasi~\cite{co-relation}]
  \label{Corels-are-equivalent}
  The categories $\Corel_I$ and $\Corel_{II}$ defined above are equivalent.
\end{prop}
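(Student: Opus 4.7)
The plan is to exhibit a functor $F \colon \Corel_{II} \to \Corel_I$ which is the identity on objects (finite sets) and which, on morphisms, sends a cospan $S \xrightarrow{l} V \xleftarrow{r} T$ to its \emph{joint image reduction} $S \to V' \leftarrow T$, where $V' := \mathrm{im}(l) \cup \mathrm{im}(r) \subseteq V$ with legs given by corestriction. By construction the reduced cospan is jointly surjective, so it is a morphism of $\Corel_I$. I will then produce an inverse and conclude.

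First I would check that $F$ is well defined on equivalence classes. For an injection $\iota \colon V_1 \hookrightarrow V_2$ of apices commuting with the legs, $\iota$ identifies the joint image $V_1'$ with $V_2'$: both are canonically the image of the common map $S \sqcup T \to V_i$. Hence the reduction is invariant under a single step of the zig-zag relation and therefore under arbitrary zig-zags.

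Second I would verify functoriality. Given composable cospans $S \to V \leftarrow T$ and $T \to W \leftarrow U$, composition in $\Corel_{II}$ is the pushout $S \to V \sqcup_T W \leftarrow U$, and $F$ then restricts to the joint image of $S$ and $U$ inside the pushout. Composition in $\Corel_I$ of the reductions $S \to V' \leftarrow T$ and $T \to W' \leftarrow U$ takes the pushout $V' \sqcup_T W'$ and then corestricts to the joint image. The universal property of the pushout yields a canonical comparison map $V' \sqcup_T W' \to V \sqcup_T W$, and I would argue that it restricts to a bijection on joint images: every element of the joint image in $V \sqcup_T W$ is connected, through the pushout equivalence relation generated by $T$, to an element of $\mathrm{im}(S) \subseteq V'$ or $\mathrm{im}(U) \subseteq W'$, so it already lies in the image of $V' \sqcup_T W'$, and conversely.

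Third I would define $G \colon \Corel_I \to \Corel_{II}$ by including a jointly surjective cospan as a cospan and taking its equivalence class. Then $F \circ G = \id_{\Corel_I}$ is immediate, since a jointly surjective cospan equals its own joint image reduction. For $G \circ F$, the inclusion $V' \hookrightarrow V$ is an injection of apices commuting with the legs, providing a single zig-zag step that identifies $[S \to V' \leftarrow T]$ with $[S \to V \leftarrow T]$ in $\Corel_{II}$, so $G \circ F = \id_{\Corel_{II}}$. The hard part is the functoriality check in the second step, i.e.\ the compatibility of the joint-image operation with set-theoretic pushout; I would discharge it using the explicit description of the pushout as $(V \sqcup W)$ quotiented by the equivalence relation generated by the image of $T$, which renders the comparison map and its restriction to joint images entirely transparent.
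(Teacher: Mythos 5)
Your proposal is correct, but it proceeds by a different route than the paper, which offers no argument of its own and instead cites Fong--Zanasi, whose proof is carried out in the general setting of a category with pushouts and a factorization system $(\mathcal E, \mathcal M)$ with $\mathcal M$ stable under pushouts (as the paper's remark records). Your joint-image reduction is exactly the (surjection, injection) factorization of the induced map $S \amalg T \to V$, so your argument is in effect a hands-on specialization of that mechanism to $\FinSet$: the one step that carries real content, functoriality of the reduction, is where the general proof invokes pushout-stability of $\mathcal M$, and where you instead compute the pushout explicitly as $(V \sqcup W)/\!\sim$ with $\sim$ generated by pairs $(r_V(t), l_W(t))$. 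That computation does work: since all generating pairs lie in $V' \sqcup W'$, any chain joining two elements of $V' \sqcup W'$ stays inside $V' \sqcup W'$, so the comparison map $V' \sqcup_T W' \to V \sqcup_T W$ is injective, and together with your surjectivity observation it restricts to an isomorphism of the two joint-image cospans; do spell out this injectivity, since "bijection on joint images" is the only place your sketch leaves something implicit. What each approach buys: the cited general argument applies verbatim to other base categories and factorization systems, while your elementary version is self-contained for finite sets and in fact produces mutually inverse functors on the nose (an isomorphism of categories, which is legitimate here because morphisms of $\Corel_I$ are isomorphism classes), which is slightly stronger than the stated equivalence. Note also that functoriality of your inclusion functor $G$ is automatic once $F$ is a functor and $F$ and $G$ are mutually inverse bijections on objects and morphisms, so no separate check is needed there.
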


\begin{rmk}
  Fong and Zanasi~\cite{co-relation} prove a more general result which shows that the definitions of $\Corel_I$ and $\Corel_{II}$ are valid in any category $\mathcal C$ with pushouts and a factorization system $(\mathcal E, \mathcal M)$ such that $\mathcal M$ is stable under pushouts.
  Moreover, they show that the equivalence of Proposition~\ref{Corels-are-equivalent} still holds.
\end{rmk}

\subsection{Properads and Frobenius algebras}

\subsubsection{Cospan corepresents special Frobenius algebras}
\label{sec:cospan-corep-frobenius}

A strong monoidal functor $A: \Cospan \to \mathcal{C}$ determines an object $A = A(pt)$, a multiplication $\mu = A(\{1, 2\} \rightarrow \{1\} \leftarrow \{1\})$, and a comultiplication $\Delta = A(\{1\} \rightarrow \{1\} \leftarrow \{1,2\})$.
Since the legs of a cospan are set maps, $\mu$ and $\Delta$ must be commutative.
The Frobenius $N$-condition is a consequence of composing cospans by pushouts:
\begin{equation}
  \begin{tikzcd}[sep = small]
    \bullet \arrow[d] & & \bullet \arrow[d] & & \bullet \arrow[rdd] & & \bullet \arrow[ldd] \\
    \bullet & & \bullet & & & & \\
    \bullet \arrow[u] \arrow[d] & \bullet \arrow[lu] \arrow[rd] & \bullet \arrow[d] \arrow[u] & = & & \bullet & \\
    \bullet & & \bullet & & & & \\
    \bullet \arrow[u] & & \bullet \arrow[u] & & \bullet \arrow[ruu] & & \bullet \arrow[luu]
  \end{tikzcd}
\end{equation}

However, composition by pushout automatically implies that $A \overset{\Delta}{\to} A \otimes A \overset{\mu}{\to} A$ is the identity, see Diagram~\eqref{eq:special-law}.
Hence $\Cospan$-ops have an additional property that is not guaranteed by the usual axioms for a Frobenius algebra.
Instead, $\Cospan$ corepresents what are called \emph{special Frobenius algebras} which were first identified by Carboni and Walters in \cite{Carboni-Walters}.
If one uses corelations, one gets the \emph{extra special Frobenius algebras} as described by Coya and Fong in \cite{Extraspecial}.
\begin{equation}
  \label{eq:special-law}
  \begin{tikzcd}[sep = small]
    & \bullet \arrow[d] & & & \bullet \arrow[dd] \\
    & \bullet & & & \\
    \bullet \arrow[rd] \arrow[ru] & & \bullet \arrow[ld] \arrow[lu] & = & \bullet \\
    & \bullet & & & \\
    & \bullet \arrow[u] & & & \bullet \arrow[uu]
  \end{tikzcd}
\end{equation}

\subsubsection{Genus data}
\label{sec:genus-data}

Considering the equivalence of commutative Frobenius algebras and 2D topological quantum field theories proved by Abrams~\cite{Abrams}, we see that $\Cospan$ would need to be equipped with an extra ``genus datum'' in order to corepresent commutative Frobenius algebras.
To understand the nature of this genus datum, we will consider the following connection between cospans and the properads of Vallette~\cite{Vallette}.

\begin{prop}
  \label{cospan-plus-as-graph}
  \cite{KMoManin}
  Combinatorially, $\Cospan^+$ is equivalent to a Borisov--Manin category of graphs such that:
  \begin{enumerate}
  \item The objects are directed aggregates of corollas.
  \item The morphisms are generated by monoidal products and compositions of morphisms with connected two-level ghost graphs.
  \end{enumerate}
  Moreover, $\Cospan^+$ corepresents properads as a Feynman category.
\end{prop}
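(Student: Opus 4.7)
The plan is to establish the combinatorial equivalence by a direct translation dictionary between cospans and corollas, and then read off the properad corepresentation from this dictionary using the general plus-construction machinery. First I would identify objects. Given any cospan $\phi \colon S \to V \leftarrow T$, taking fibers over $V$ yields a canonical decomposition $\phi \simeq \coprod_{v \in V}(S_v \to \{v\} \leftarrow T_v)$ into cospans with singleton middle set. Each single-vertex cospan is nothing but a directed corolla with input legs $S_v$ and output legs $T_v$. Therefore an arbitrary object $\phi_1 \boxtimes \cdots \boxtimes \phi_n$ of $\Cospan^+$ is canonically a directed aggregate of corollas, indexed by $\coprod_i V_i$.

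Next, I would handle the generating morphisms. Given a composable pair $\phi_0 \colon S \to V \leftarrow T$ and $\phi_1 \colon T \to W \leftarrow U$, the composite $\phi_1 \circ \phi_0$ is the pushout $S \to V \amalg_T W \leftarrow U$. After decomposing both $\phi_0$ and $\phi_1$ into single-vertex cospans, each $t \in T$ sits in the output fiber of a unique $v(t) \in V$ and in the input fiber of a unique $w(t) \in W$; interpreting this as an edge between two corollas produces a two-level directed ghost graph with lower-level vertices $V$ and upper-level vertices $W$. This is precisely the graph attached to $\gamma_{\phi_1, \phi_0}$, and the connected components of this graph match the hereditary factorization of $\phi_1 \circ \phi_0$ into irreducibles, so the indecomposable $\gamma$-morphisms are exactly those whose underlying graph is connected. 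With this dictionary in hand, I would define a strong monoidal functor from $\Cospan^+$ to the Borisov--Manin category and verify essential surjectivity, fullness, and faithfulness; essential surjectivity and fullness are immediate from the construction, while faithfulness reduces to matching the relations of $\Cospan^+$ (equivariance, associativity, interchange) with their counterparts in the graph category.

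For the second claim, a strong monoidal functor $\O \colon \Cospan^+ \to \mathcal{C}$ assigns to each basic corolla with inputs $S$ and outputs $T$ an object $\O(S,T)$, and equivariance with respect to isomorphisms $(\sigma \Downarrow \sigma')$ supplies a $\mathrm{Aut}(S) \times \mathrm{Aut}(T)$-action. The image of a connected two-level $\gamma$-morphism is a composition $\bigotimes_{v \in V} \O(S_v, T_v) \otimes \bigotimes_{w \in W} \O(S'_w, T'_w) \to \O(S, U)$ indexed by a connected bipartite graph, which is exactly the connected-graph composition defining a properad in the sense of Vallette. The properad associativity and equivariance axioms translate respectively to associativity of pushouts of cospans and equivariance of $\gamma$-morphisms, so $\Cospan^+$-ops are precisely properads.

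The main technical obstacle is faithfulness of the combinatorial equivalence: one has to verify that the relations generated by isomorphisms, interchange, and associativity in $\Cospan^+$ collapse to exactly the equivalence on two-level graphs dictated by the Borisov--Manin composition. This hinges on the hereditary property of $\Cospan$, already noted in the paper, which guarantees that every composite morphism breaks up canonically along connected components of its underlying pushout graph; once this is in place, the rest is bookkeeping.
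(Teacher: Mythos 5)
Your proposal is correct and takes essentially the same route as the paper's sketch: identify a cospan $S \rightarrow V \leftarrow T$ with a directed aggregate of corollas (flags $S \amalg T$, vertices $V$, boundary map given by the legs), read the $\gamma$-morphisms as two-level ghost graphs whose edges come from the shared middle set $T$, and use connectedness of the generating morphisms to exclude mergers. The additional detail you give on faithfulness and on matching Vallette's properad axioms only elaborates what the paper leaves to the cited reference.
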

\begin{proof}[Sketch]
  The objects of $\Cospan^+$ are cospans $S \rightarrow V \leftarrow T$.
  Let $F = S \amalg T$ be the flags with the elements of $S$ as the in-flags and the elements of $T$ as the out-flags.
  Let $V$ be the vertices.
  The universal property of coproducts gives a map $\del: S \amalg T \to V$.
  This determines a Borisov--Manin graph with no edges.

  A typical generating morphism in $\Cospan^+$ is depicted in Diagram~\eqref{eq:typical-morphism-cospan-plus}.
  Under the graphical interpretation of cospans, the $\gamma$-morphisms of the plus construction separate the vertices into two levels and match the out-flags of the top vertices to the in-flags of the bottom vertices.
  \begin{equation}
    \label{eq:typical-morphism-cospan-plus}
    \begin{tikzpicture}[baseline=(current  bounding  box.center)]
      \node [style=White] (0) at (-3, 0) {};
      \node [style=White] (1) at (-2, 0) {};
      \node [style=White] (2) at (-1, 0) {};
      \node [style=none] (3) at (-3.25, -0.5) {};
      \node [style=none] (4) at (-3, 0.5) {};
      \node [style=none] (5) at (-2, 0.5) {};
      \node [style=none] (6) at (-1.25, -0.5) {};
      \node [style=none] (7) at (-1.25, 0.5) {};
      \node [style=none] (8) at (-0.75, 0.5) {};
      \node [style=White] (18) at (2.5, 0) {};
      \node [style=none] (19) at (2, -0.5) {};
      \node [style=none] (20) at (2, 0.5) {};
      \node [style=none] (21) at (2.5, 0.5) {};
      \node [style=none] (22) at (3, 0.5) {};
      \node [style=none] (23) at (-0.25, 0) {};
      \node [style=none] (24) at (1.5, 0) {};
      \node [style=none] (25) at (-2.75, -0.5) {};
      \node [style=none] (26) at (-2, -0.5) {};
      \node [style=none] (27) at (-0.75, -0.5) {};
      \node [style=none] (28) at (2.5, -0.5) {};
      \node [style=White] (30) at (-4, 0) {};
      \node [style=none] (31) at (-4.25, 0.5) {};
      \node [style=none] (32) at (-3.75, 0.5) {};
      \node [style=none] (33) at (-4, -0.5) {};
      \node [style=SmallWhite] (50) at (0.75, 0.75) {};
      \node [style=SmallWhite] (51) at (0.25, 1) {};
      \node [style=SmallWhite] (52) at (0.75, 1) {};
      \node [style=none] (53) at (0.5, 0.5) {};
      \node [style=none] (55) at (0.25, 1.25) {};
      \node [style=none] (57) at (0.5, 1.25) {};
      \node [style=none] (58) at (1, 1.25) {};
      \node [style=none] (59) at (1, 0.5) {};
      \node [style=SmallWhite] (62) at (0.25, 0.75) {};
      \node [style=none] (65) at (0.25, 0.5) {};
      \node [style=none] (66) at (3, -0.5) {};
      \draw (3.center) to (0);
      \draw (4.center) to (0);
      \draw (5.center) to (1);
      \draw (6.center) to (2);
      \draw (7.center) to (2);
      \draw (8.center) to (2);
      \draw (19.center) to (18);
      \draw (20.center) to (18);
      \draw (21.center) to (18);
      \draw (22.center) to (18);
      \draw [style=Arrow] (23.center) to (24.center);
      \draw (0) to (25.center);
      \draw (1) to (26.center);
      \draw (2) to (27.center);
      \draw (18) to (28.center);
      \draw (31.center) to (30);
      \draw (32.center) to (30);
      \draw (30) to (33.center);
      \draw (53.center) to (50);
      \draw (55.center) to (51);
      \draw (57.center) to (52);
      \draw (58.center) to (52);
      \draw (50) to (59.center);
      \draw (62) to (65.center);
      \draw (62) to (52);
      \draw (52) to (50);
      \draw (51) to (62);
      \draw (66.center) to (18);
    \end{tikzpicture}
  \end{equation}
  The assumption that the generating morphisms are connected excludes graph mergers.
\end{proof}

\begin{definition}[Genus properad]
  This graphical description is convenient and nicely complements the work of Berger and Kaufmann in \cite{DDecDennis} where they give a categorical and combinatorial account of different structures and operations coming from string topology.

  In particular, they demonstrate that the genus datum can be understood as a strong monoidal functor $\O_{\mathrm{genus}}: (\Graphs, \amalg) \to (\Set, \times)$ which assigns a genus labeling to each vertex.
  Specializing to our situation, define the \emph{genus properad} $\O_{\mathrm{genus}}^*: \Cospan^+ \to (\Set, \times)$ to be the strong monoidal functor obtained by the following composition:
  \begin{equation}
    \Cospan^+ \hookrightarrow (\Graphs^{\mathrm{dir}}, \amalg) \ta (\Graphs, \amalg) \overset{\O_{\mathrm{genus}}}{\to} (\Set, \times)
  \end{equation}
\end{definition}

\subsubsection{Indexed enrichments}

With the extended notion of a plus construction described in \cite{KMoManin}, the indexed enrichments of Kaufmann and Ward \cite{feynman} can be adapted to unique factorization categories.
We will describe this briefly here and use it to establish the desired connection to Frobenius algebras.

\begin{definition}[Kaufmann--Ward~\cite{feynman}]
  Given a strong monoidal functor $D: \M^+ \to \mathcal{C}$, define the (enriched) category $\M_{\O}$ as follows:
  \begin{enumerate}
  \item The objects are the same as the original $\M$.
  \item The hom $\mathcal{C}$-objects are $\Hom_{\M_{\O}}(X, Y) = \coprod_{\phi \in \Hom_{\M}(X, Y)} D(\phi)$.
  \item The composition is induced by the $\gamma$-morphisms $D(\phi) \otimes D(\psi) \to D(\phi \circ \psi)$ of the plus construction.
  \end{enumerate}
\end{definition}

\begin{example}
  The indexed enrichment of an operad $\O: \FinSet^+ \to \mathcal{C}$ produces a new category $\FinSet_{\O}$ which corepresents $\O$-algebras.
  For more examples of this sort, we refer the reader to \cite{feynmanrep}.
\end{example}

\begin{cor}
  The category $\Cospan_{\O_{\mathrm{genus}}^*}$ corepresents commutative Frobenius monoids.
\end{cor}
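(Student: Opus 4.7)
The plan is to unpack the indexed enrichment explicitly, identify its generators as the familiar Frobenius operations plus a handle operator supplied by the genus datum, and then match $\Cospan_{\O_{\mathrm{genus}}^*}$ with the $2$D cobordism category in order to invoke Abrams' theorem.

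First I would describe $\Cospan_{\O_{\mathrm{genus}}^*}$ concretely: its objects are finite sets, and a morphism $X \to Y$ is a cospan $X \to V \leftarrow Y$ equipped with a function $g: V \to \N$ assigning a genus to each vertex. Composition is given by pushout in $\Cospan$ together with the transfer of genus labels prescribed by the $\gamma$-morphisms of $\Cospan^+$. Under the graphical description of Proposition~\ref{cospan-plus-as-graph}, this amounts to stacking two graphs and contracting the ghost edges at each new vertex, with the composite genus at a vertex being the sum of the genera of its preimages plus the first Betti number of the contracted subgraph sitting over it.

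Next I would identify the generators of a strong monoidal functor $A: \Cospan_{\O_{\mathrm{genus}}^*} \to \mathcal{C}$. Setting $A := A(\mathrm{pt})$, the cospans already used in Section~\ref{sec:cospan-corep-frobenius} produce a multiplication $\mu$, a comultiplication $\Delta$, a unit $\eta$, and a counit $\epsilon$, each carrying the trivial genus label; commutativity, cocommutativity, (co)associativity, the (co)unit laws, and the Frobenius $N$-relation then follow from pushouts in $\Cospan$ exactly as before. The genus datum further supplies a handle element $\nu: A \to A$ coming from the cospan $\mathrm{pt} \to \{v\} \leftarrow \mathrm{pt}$ with $g(v)=1$. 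The pushout computation that forced $\mu \circ \Delta = \id$ in the undecorated setting now produces a single vertex of genus $1$, so the special law is replaced by $\mu \circ \Delta = \nu$, which is precisely the torus relation of $2$D TQFT.

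The main obstacle will be showing that the relations implicit in the indexed enrichment are \emph{exactly} those of a commutative Frobenius monoid, i.e., that no extraneous relations are forced. For this I would build a symmetric monoidal equivalence $\Cospan_{\O_{\mathrm{genus}}^*} \simeq \mathbf{2Cob}$ with the $2$D cobordism category: send a genus-labeled cospan $X \to V \leftarrow Y$ to the surface obtained by replacing each $v \in V$ with a sphere carrying one boundary disc per flag at $v$ and $g(v)$ additional handles, and gluing along the legs. Essential surjectivity follows from the classification of compact surfaces, functoriality from the gluing interpretation of pushouts, and full faithfulness from checking that two genus-labeled cospans determine diffeomorphic cobordisms if and only if they agree under the contraction-with-Betti-number composition law described above. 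Composing this equivalence with Abrams' theorem~\cite{Abrams} then identifies strong symmetric monoidal functors out of $\Cospan_{\O_{\mathrm{genus}}^*}$ with commutative Frobenius monoids in $\mathcal{C}$.
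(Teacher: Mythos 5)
Your proposal is correct and follows essentially the same route as the paper, whose (much terser) proof simply asserts that with the genus datum the morphisms of $\Cospan_{\O_{\mathrm{genus}}^*}$ agree with Abrams' description of commutative Frobenius algebras via 2D TQFTs. You have merely filled in the details the paper leaves implicit: the explicit genus-transfer composition law, the handle operator replacing the special law, and the equivalence with the 2D cobordism category before invoking Abrams' theorem.
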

\begin{proof}
  With the extra genus datum, the morphisms of $\Cospan_{\O_{\mathrm{genus}}^*}$ agree with the Abrams description of Frobenius algebras as 2D topological quantum field theories.
\end{proof}

\subsection{Structured cospans}

There are several methods for constructing cospan categories equipped with extra structure such as the decorated cospans of Fong~\cite{decorated-cospans} and the structured cospans of Baez and Courser~\cite{baez2019structured, courser2020open}.
In this section, we will briefly survey structured cospans and use it to describe colored versions of cospans and properads.

\begin{definition}[Courser~\cite{courser2020open}]
  Given a functor $F: \Gpd \to \mathcal{C}$ where $\mathcal{C}$ has pushouts, there is a double category $\Cospan(F)$ of \emph{structured cospans}:
  \begin{enumerate}
  \item The objects are the same as $\Gpd$ and the vertical morphisms are the morphisms of $\Gpd$.
  \item The horizontal morphisms are cospans with $F$ applied to both feet, see Diagram~\eqref{eq:hor-arrows}.
    Horizontal composition is given by taking pushouts.
    \begin{equation}
      \label{eq:hor-arrows}
      \begin{tikzcd}
        X & Y && {F(X)} & V & {F(Y)} \\
        {X'} & {Y'} && {F(X')} & {V'} & {F(Y')}
    	\arrow["\sigma"', from=1-1, to=2-1]
    	\arrow[""{name=0, anchor=center, inner sep=0}, "\tau", from=1-2, to=2-2]
    	\arrow[""{name=1, anchor=center, inner sep=0}, "\shortmid"{marking}, from=1-1, to=1-2]
    	\arrow[""{name=2, anchor=center, inner sep=0}, "\shortmid"{marking}, from=2-1, to=2-2]
        \arrow[""{name=3, anchor=center, inner sep=0}, "{\sigma}"', from=1-4, to=2-4]
    	\arrow["{\tau}", from=1-6, to=2-6]
        \arrow[from=2-4, to=2-5]
        \arrow[from=2-6, to=2-5]
        \arrow[from=1-4, to=1-5]
        \arrow[from=1-6, to=1-5]
        \arrow["\Phi", from=1-5, to=2-5]
        \arrow["\Phi", shorten <=10pt, shorten >=10pt, Rightarrow, from=1, to=2]
    	\arrow["{:=}"{description}, Rightarrow, draw=none, from=0, to=3]
      \end{tikzcd}
    \end{equation}
  \end{enumerate}
\end{definition}

\begin{remark}
  The functor $F$ is denoted by $L$ in Baez and Courser~\cite{baez2019structured} which stands for ``left adjoint'' since the functor often is indeed a left adjoint in their work.
  For us, this is generally not the case, so we drop that convention.
\end{remark}

\begin{prop}[Section 3.4 of \cite{courser2020open}]
  If $\Gpd$ and $\mathcal{C}$ are symmetric monoidal categories and $F: \Gpd \to \mathcal{C}$ is a strong symmetric monoidal functor, then the double category $\Cospan(F)$ becomes symmetric monoidal in a canonical way.
\end{prop}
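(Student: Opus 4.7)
The plan is to construct the symmetric monoidal structure levelwise, inheriting the tensor from $\Gpd$ on the ``vertical data'' (objects and vertical morphisms) and from $\mathcal{C}$ on the ``horizontal apices'' (the middle objects $V$ of cospans), and then using the strong monoidal structure of $F$ to glue these together along the feet.

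First I would define the tensor on objects and vertical morphisms simply by $X \otimes X'$ and $\sigma \otimes \sigma'$ in $\Gpd$. On horizontal morphisms, given $F(X) \to V \leftarrow F(Y)$ and $F(X') \to V' \leftarrow F(Y')$, I would form the cospan
\begin{equation*}
F(X \otimes X') \xrightarrow{\;\varphi_{X,X'}\;} F(X) \otimes F(X') \to V \otimes V' \leftarrow F(Y) \otimes F(Y') \xleftarrow{\;\varphi_{Y,Y'}^{-1}\;} F(Y \otimes Y'),
\end{equation*}
where $\varphi$ denotes the strong monoidal constraint of $F$. Squares tensor componentwise, with their middle $2$-cells $\Phi \otimes \Phi'$ taken in $\mathcal{C}$. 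Next I would verify that this tensor is a double functor: vertical composition of squares commutes with tensoring (immediate from functoriality of $\otimes$ in $\mathcal{C}$ and $\Gpd$), and horizontal composition interacts with tensor via the comparison
\begin{equation*}
(V \otimes V') \amalg_{F(Y) \otimes F(Y')} (W \otimes W') \longrightarrow (V \amalg_{F(Y)} W) \otimes (V' \amalg_{F(Y')} W'),
\end{equation*}
which, after inserting the isomorphisms $F(Y \otimes Y') \cong F(Y) \otimes F(Y')$, gives the required globular comparison cell.

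Then I would construct the associator, left and right unitors, and symmetry as globular invertible squares: at the level of objects they are the constraints of $\Gpd$, and at the level of cospans they are induced by the corresponding constraints in $\mathcal{C}$ together with the coherence isomorphisms $\varphi$. Checking the symmetric monoidal coherence (pentagon, triangle, hexagon) reduces to the coherence already satisfied by $\Gpd$, by $\mathcal{C}$, and by the strong symmetric monoidal functor $F$; these combine diagrammatically because every coherence cell is built from $\varphi$ and the ambient constraints.

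The main obstacle is that horizontal composition is defined via pushouts, and tensor products do not in general preserve pushouts in $\mathcal{C}$, so the comparison map above is a priori only a map, not an isomorphism. I would address this exactly as in Shulman's framework for symmetric monoidal double categories: the tensor is a \emph{pseudo} double functor, and the coherence cell above is the required tensorator, which is an isomorphism whenever $\otimes$ in $\mathcal{C}$ is cocontinuous in each variable (the setting in which Baez--Courser work, where $\mathcal{C}$ is typically a presheaf topos or similar). In that case all the data assembles, and the axioms of a symmetric monoidal double category follow by routine diagram chases from the symmetric monoidal coherence of $\Gpd$, $\mathcal{C}$, and $F$.
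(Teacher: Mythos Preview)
The paper does not supply a proof of this proposition at all: it is stated with the attribution ``Section 3.4 of \cite{courser2020open}'' and then used as a black box. So there is no in-paper argument to compare against; your sketch is being measured against the cited source rather than anything the author wrote.

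That said, your outline is essentially the construction Baez and Courser carry out, via Shulman's framework for symmetric monoidal double categories: tensor the vertical data in $\Gpd$, tensor the apices in $\mathcal{C}$, and use the strong monoidal constraint of $F$ to align the feet. You have also correctly isolated the one nontrivial point, namely the comparison
\[
(V \otimes V') \amalg_{F(Y)\otimes F(Y')} (W \otimes W') \longrightarrow (V \amalg_{F(Y)} W)\otimes (V' \amalg_{F(Y')} W').
\]
One caution: the proposition as stated here is a paraphrase, and in the actual Baez--Courser results the hypotheses are arranged so that this comparison is automatically invertible (for instance by taking the monoidal product in $\mathcal{C}$ to be the coproduct, or more generally assuming $\otimes$ preserves the relevant colimits). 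Your appeal to ``$\otimes$ cocontinuous in each variable'' is the right move, but it is an extra hypothesis, not something that follows from the statement as written. If you are writing this up as a self-contained proof you should either add that hypothesis explicitly or restrict to the cocartesian case, rather than leaving it as a parenthetical.
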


\begin{figure}
    \centering
    \begin{tikzpicture}
      \node [style=Red] (1) at (-4, 3) {};
      \node [style=Blue] (2) at (-4, 2) {};
      \node [style=Blue] (3) at (-4, 1) {};
      \node [style=Red] (5) at (1, 3) {};
      \node [style=Red] (6) at (1, 1) {};
      \node [style=Red] (8) at (-1.5, 3) {};
      \node [style=Blue] (9) at (-1.5, 2) {};
      \node [style=Red] (10) at (-1.5, 1) {};
      \node [style=WhiteSq,minimum size=0.4cm] (13) at (-2.5, 2.5) {};
      \node [style=WhiteSq,minimum size=0.4cm] (14) at (-2.5, 1.5) {};
      \node [style=WhiteSq,minimum size=0.4cm] (16) at (-0.5, 2) {};
      \node [style=none] (18) at (-3, 3.5) {};
      \node [style=none] (19) at (0, 3.5) {};
      \node [style=none] (20) at (0, 0.5) {};
      \node [style=none] (21) at (-3, 0.5) {};
      \node [style=none] (38) at (2, 5) {};
      \node [style=none] (39) at (2, 0) {};
      \node [style=none] (40) at (-1.5, 4.75) {\large Composition};
      \node [style=none] (41) at (4.5, 4.75) {\large Result};
      \node [style=Red] (42) at (3, 3) {};
      \node [style=Blue] (43) at (3, 2) {};
      \node [style=Blue] (44) at (3, 1) {};
      \node [style=Red] (45) at (6, 3) {};
      \node [style=Red] (46) at (6, 1) {};
      \node [style=WhiteSq,minimum size=0.4cm] (47) at (4.5, 2) {};
      \draw (13) to (8);
      \draw (13) to (9);
      \draw (9) to (16);
      \draw (16) to (5);
      \draw (6) to (16);
      \draw (1) to (13);
      \draw (2) to (14);
      \draw (14) to (10);
      \draw (3) to (14);
      \draw [style=Dash] (18.center) to (21.center);
      \draw [style=Dash] (20.center) to (21.center);
      \draw [style=Dash] (20.center) to (19.center);
      \draw [style=Dash] (19.center) to (18.center);
      \draw (38.center) to (39.center);
      \draw (8) to (16);
      \draw (10) to (16);
      \draw (42) to (47);
      \draw (47) to (45);
      \draw (43) to (47);
      \draw (44) to (47);
      \draw (46) to (47);
    \end{tikzpicture}
    \caption{A composition of two morphisms in the colored cospan category.}
        \label{fig-colored-cospan}
\end{figure}
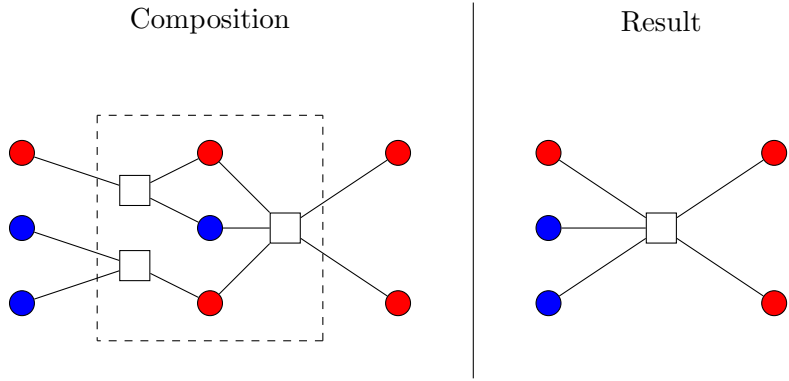

\begin{ex}[Cospans and properads with colors]
    \label{colored-cospans}
    Let $\V$ be a discrete category with two objects $R$ (``red'') and $B$ (``blue'').
    There is a canonical strong monoidal functor $clr: \V^{\otimes} \to \FinSet$ which sends an object of length $n$ to the set $\underline{n}$.
    By the previous result, this defines a symmetric monoidal category $\Cospan(clr)$.
    Composition is illustrated in Figure~\ref{fig-colored-cospan}.
    Moreover, $\Cospan(clr)^+$ corepresents 2-colored properads.
\end{ex}

\subsection{Props and mergers}

We always have a localization functor $L_{\M}: \M^{nc+} \to \M^{+}$ for any monoidal category $\M$ which sends a basic object $f_1 \boxtimes \ldots \boxtimes f_n$ to $f_1 \otimes \ldots \otimes f_n$.
If there is another functor $J: \M^{nc+} \to \M^{+}$, we can incorporate these as an extra structure on a plus construction which allows ``mergers'' of morphisms.

\begin{definition}
  Given a functor $J: \M^{nc+} \to \M^{+}$, define a new category by starting with $\M^+$ then formally add a generating morphism $B_\phi: L_{\M}(\phi) \to J(\phi)$ for each basic object $\phi$ in $\M^{nc+}$.
  We then add relations making $B_\phi$ natural in the sense that the following diagram commutes for each morphism $\Phi$ of $\M^{nc+}$:
  \begin{equation}
    \begin{tikzcd}
      L_{\M}(\phi)
      \arrow[d, "B_\phi"']
      \arrow[r, "L_{\M}(\Phi)"]
      & L_{\M}(\psi)
      \arrow[d, "B_\psi"] \\
      J(\phi)
      \arrow[r, "J(\Phi)"']
      & J(\psi)
    \end{tikzcd}
  \end{equation}
\end{definition}

\begin{example}
  In the category of cospans, there is a functor $J: Cospan^{nc+} \to Cospan^{+}$ which sends a basic object $S \rightarrow V \leftarrow T$ to the basic object $S \rightarrow pt \leftarrow T$.
  The morphisms $B_{\phi}$ are mergers in the ordinary sense.
  Naturality for isomorphisms is just equivariance.
  Naturality for $\gamma: \phi_0 \boxtimes \phi_1 \to \phi_0 \circ \phi_1$ is a sort of interchange, see Figure~\ref{fig:B-interchange}.  
\end{example}

\begin{figure}
  \centering
  \begin{tikzpicture}
    \node [style=White] (0) at (-5.25, 2) {};
    \node [style=White] (1) at (-4.5, 2) {};
    \node [style=White] (2) at (-3.75, 2) {};
    \node [style=White] (3) at (-2.5, 2) {};
    \node [style=White] (4) at (-1.25, 2) {};
    \node [style=White] (5) at (-2, 2) {};
    \node [style=none] (6) at (-3, 2.75) {};
    \node [style=none] (7) at (-3, 1.25) {};
    \node [style=none] (8) at (-5.25, 1.5) {};
    \node [style=none] (9) at (-4.5, 1.5) {};
    \node [style=none] (10) at (-4, 1.5) {};
    \node [style=none] (11) at (-3.5, 1.5) {};
    \node [style=none] (12) at (-2.5, 1.5) {};
    \node [style=none] (13) at (-1.5, 1.5) {};
    \node [style=none] (14) at (-1, 1.5) {};
    \node [style=none] (15) at (-2, 1.5) {};
    \node [style=none] (16) at (-5.25, 2.5) {};
    \node [style=none] (17) at (-4.75, 2.5) {};
    \node [style=none] (18) at (-4.25, 2.5) {};
    \node [style=none] (19) at (-3.75, 2.5) {};
    \node [style=none] (20) at (-2.5, 2.5) {};
    \node [style=none] (21) at (-1.5, 2.5) {};
    \node [style=none] (22) at (-1, 2.5) {};
    \node [style=none] (23) at (-2, 2.5) {};
    \node [style=White] (25) at (-4.5, -1.5) {};
    \node [style=White] (28) at (-1.75, -1.5) {};
    \node [style=none] (30) at (-3, -0.75) {};
    \node [style=none] (31) at (-3, -2.25) {};
    \node [style=none] (32) at (-5, -2) {};
    \node [style=none] (33) at (-4.75, -2) {};
    \node [style=none] (34) at (-4.25, -2) {};
    \node [style=none] (35) at (-4, -2) {};
    \node [style=none] (36) at (-2.25, -2) {};
    \node [style=none] (37) at (-2, -2) {};
    \node [style=none] (38) at (-1.5, -2) {};
    \node [style=none] (39) at (-1.25, -2) {};
    \node [style=none] (40) at (-5, -1) {};
    \node [style=none] (41) at (-4.75, -1) {};
    \node [style=none] (42) at (-4.25, -1) {};
    \node [style=none] (43) at (-4, -1) {};
    \node [style=none] (44) at (-2.25, -1) {};
    \node [style=none] (45) at (-2, -1) {};
    \node [style=none] (46) at (-1.5, -1) {};
    \node [style=none] (47) at (-1.25, -1) {};
    \node [style=none] (48) at (-0.25, 2) {};
    \node [style=none] (49) at (1.25, 2) {};
    \node [style=none] (50) at (-3, 0.75) {};
    \node [style=none] (51) at (-3, -0.25) {};
    \node [style=none] (52) at (-0.25, -1.5) {};
    \node [style=none] (53) at (1.25, -1.5) {};
    \node [style=White] (54) at (2, 2) {};
    \node [style=White] (55) at (3, 2) {};
    \node [style=none] (57) at (2, 1.5) {};
    \node [style=none] (58) at (2.5, 1.5) {};
    \node [style=none] (59) at (3, 1.5) {};
    \node [style=none] (60) at (3.5, 1.5) {};
    \node [style=none] (61) at (2, 2.5) {};
    \node [style=none] (62) at (2.5, 2.5) {};
    \node [style=none] (63) at (3, 2.5) {};
    \node [style=none] (64) at (3.5, 2.5) {};
    \node [style=White] (66) at (2.75, -1.5) {};
    \node [style=none] (67) at (2.25, -2) {};
    \node [style=none] (68) at (2.5, -2) {};
    \node [style=none] (69) at (3, -2) {};
    \node [style=none] (70) at (3.25, -2) {};
    \node [style=none] (71) at (2.25, -1) {};
    \node [style=none] (72) at (2.5, -1) {};
    \node [style=none] (73) at (3, -1) {};
    \node [style=none] (74) at (3.25, -1) {};
    \node [style=none] (75) at (2.75, 0.75) {};
    \node [style=none] (76) at (2.75, -0.25) {};
    \node [style=SmallWhite] (77) at (0.5, -2.75) {};
    \node [style=none] (78) at (0, -3) {};
    \node [style=none] (79) at (0.25, -3) {};
    \node [style=none] (80) at (0.75, -3) {};
    \node [style=none] (81) at (1, -3) {};
    \node [style=none] (82) at (0, -2.5) {};
    \node [style=none] (83) at (0.25, -2.5) {};
    \node [style=none] (84) at (0.75, -2.5) {};
    \node [style=none] (85) at (1, -2.5) {};
    \node [style=SmallWhite] (86) at (0.5, -2.25) {};
    \node [style=none] (87) at (0, -2) {};
    \node [style=none] (88) at (0.25, -2) {};
    \node [style=none] (89) at (0.75, -2) {};
    \node [style=none] (90) at (1, -2) {};
    \node [style=none] (91) at (0, 2.5) {};
    \node [style=SmallWhite] (92) at (0.5, 2.75) {};
    \node [style=SmallWhite] (93) at (1, 2.75) {};
    \node [style=SmallWhite] (94) at (0, 2.75) {};
    \node [style=none] (95) at (0.5, 2.5) {};
    \node [style=none] (96) at (0.75, 2.5) {};
    \node [style=none] (97) at (1.25, 2.5) {};
    \node [style=SmallWhite] (102) at (0, 3.25) {};
    \node [style=SmallWhite] (103) at (1, 3.25) {};
    \node [style=SmallWhite] (104) at (0.5, 3.25) {};
    \node [style=none] (109) at (0, 3.5) {};
    \node [style=none] (110) at (0.75, 3.5) {};
    \node [style=none] (111) at (1.25, 3.5) {};
    \node [style=none] (112) at (0.5, 3.5) {};
    \draw [dashed] (6.center) to (7.center);
    \draw (16.center) to (0);
    \draw (0) to (8.center);
    \draw (17.center) to (1);
    \draw (1) to (18.center);
    \draw (1) to (9.center);
    \draw (19.center) to (2);
    \draw (2) to (10.center);
    \draw (2) to (11.center);
    \draw (20.center) to (3);
    \draw (3) to (12.center);
    \draw (4) to (21.center);
    \draw (22.center) to (4);
    \draw (4) to (13.center);
    \draw (4) to (14.center);
    \draw (23.center) to (5);
    \draw (5) to (15.center);
    \draw [dashed] (30.center) to (31.center);
    \draw (41.center) to (25);
    \draw (25) to (42.center);
    \draw (25) to (33.center);
    \draw (28) to (45.center);
    \draw (46.center) to (28);
    \draw (28) to (37.center);
    \draw (28) to (38.center);
    \draw (40.center) to (25);
    \draw (25) to (32.center);
    \draw (25) to (34.center);
    \draw (25) to (43.center);
    \draw (25) to (35.center);
    \draw (44.center) to (28);
    \draw (36.center) to (28);
    \draw (39.center) to (28);
    \draw (47.center) to (28);
    \draw [style=Arrow] (48.center) to (49.center);
    \draw [style=Arrow] (52.center) to (53.center);
    \draw [style=Arrow] (50.center) to (51.center);
    \draw (61.center) to (54);
    \draw (54) to (57.center);
    \draw (55) to (62.center);
    \draw (63.center) to (55);
    \draw (55) to (58.center);
    \draw (55) to (59.center);
    \draw (64.center) to (55);
    \draw (60.center) to (55);
    \draw (66) to (72.center);
    \draw (73.center) to (66);
    \draw (66) to (68.center);
    \draw (66) to (69.center);
    \draw (74.center) to (66);
    \draw (70.center) to (66);
    \draw [style=Arrow] (75.center) to (76.center);
    \draw (83.center) to (77);
    \draw (77) to (84.center);
    \draw (77) to (79.center);
    \draw (82.center) to (77);
    \draw (77) to (78.center);
    \draw (77) to (80.center);
    \draw (77) to (85.center);
    \draw (77) to (81.center);
    \draw (86) to (88.center);
    \draw (89.center) to (86);
    \draw (87.center) to (86);
    \draw (90.center) to (86);
    \draw (86) to (82.center);
    \draw (86) to (83.center);
    \draw (86) to (84.center);
    \draw (86) to (85.center);
    \draw (91.center) to (94);
    \draw (92) to (95.center);
    \draw (93) to (96.center);
    \draw (93) to (97.center);
    \draw (109.center) to (102);
    \draw (103) to (110.center);
    \draw (111.center) to (103);
    \draw (112.center) to (104);
    \draw (104) to (92);
    \draw (112.center) to (104);
    \draw (103) to (93);
    \draw (103) to (92);
    \draw (71.center) to (66);
    \draw (66) to (67.center);
    \draw (102) to (94);
  \end{tikzpicture}
  \caption{The naturality condition of $B$ on the $\gamma$-morphisms. The dashed line indicates a $\boxtimes$ that became an $\otimes$ after applying either the functor $L$ or $J$.}
    \label{fig:B-interchange}
\end{figure}
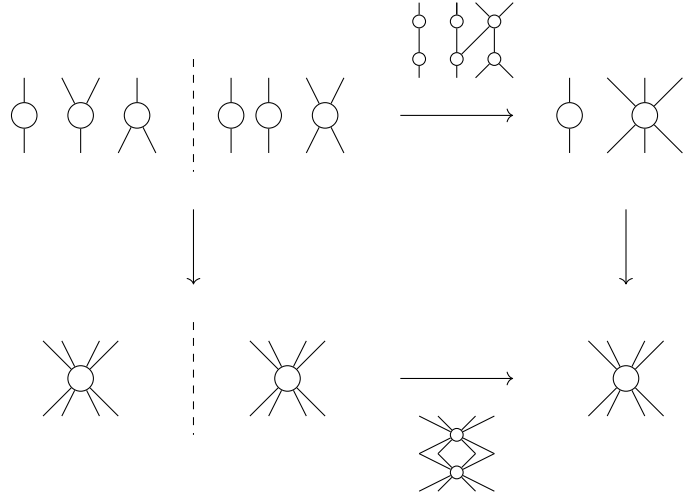

\section{Spans}

Similar to cospans, the category $\Span$ is defined so that:
\begin{enumerate}
\item The objects are finite sets.
\item The morphisms are diagrams $S \leftarrow V \rightarrow T$ called \emph{spans} modulo isomorphisms in the middle:
  \begin{equation*}
    \begin{tikzcd}[row sep = small]
      & V \arrow[dd, "\sim"] & \\
      S \arrow[ru, leftarrow] \arrow[rd, leftarrow]
      & & T \arrow[lu, leftarrow] \arrow[ld, leftarrow] \\
      & V' &                        
    \end{tikzcd}
  \end{equation*}
\item Define composition by taking a pullback in the middle of a pair of spans:
  \begin{equation*}
    \begin{tikzcd}[row sep = small]
      && {V \mathbin{_b\times_c} V'} \\
      &  V && {V'} \\
      X  && Y && Z
      \arrow["a", from=3-1, to=2-2, leftarrow]
      \arrow["b"', from=3-3, to=2-2, leftarrow]
      \arrow["{\pi_b}", dashed, from=2-2, to=1-3, leftarrow]
      \arrow["{\pi_c}"', dashed, from=2-4, to=1-3, leftarrow]
      \arrow["\lrcorner"{anchor=center, pos=0.125, rotate=-45}, draw=none, from=1-3, to=3-3, leftarrow]
      \arrow["c", from=3-3, to=2-4, leftarrow]
      \arrow["d"', from=3-5, to=2-4, leftarrow]
    \end{tikzcd}
  \end{equation*}
\end{enumerate}

\begin{prop}
  \cite{KMoManin}
  $\Span$ is a hereditary unique factorization category.
\end{prop}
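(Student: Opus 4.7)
The plan is to exhibit a UFC structure on $\Span$ whose irreducibles are the single-apex spans, and then verify the hereditary condition via the pullback structure of span composition.

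For the UFC structure, I would take $\Indec$ to be the groupoid whose objects are single-apex spans $S \leftarrow \{*\} \to T$ with span isomorphisms between them, and let $\jmath : \Indec \to \Iso(\Span \downarrow \Span)$ be the tautological inclusion into the arrow category. The key fact is that every span $S \leftarrow V \to T$ decomposes canonically as $\coprod_{v \in V}(S \leftarrow \{v\} \to T)$ in $\Span^{\boxtimes}$, obtained by restricting the legs to each point of the apex. Any isomorphism of such spans fixing the endpoints decomposes as a permutation of $V$ composed with a family of singleton-apex isomorphisms, so $\jmath^{\boxtimes}$ is essentially surjective and fully faithful, yielding the required equivalence. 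Essential smallness of $\Span \downarrow \Span$ is automatic since apices between finite sets form a set up to isomorphism.

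For the hereditary condition, I would take composable spans $\phi_0 = (X \leftarrow V \to Y)$ and $\phi_1 = (Y \leftarrow W \to Z)$ with composite apex $U = V \times_Y W$. For each $u = (v, w) \in U$, the irreducible $\phi_u = (X \leftarrow \{u\} \to Z)$ admits the canonical factorization $\phi_u \simeq \phi_{1,w} \circ \phi_{0,v}$, a direct consequence of the universal property of the pullback: the singleton $\{(v,w)\}$ is itself the pullback of $\{v\} \subseteq V$ and $\{w\} \subseteq W$ over their shared image $f(v) = g(w) \in Y$.

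The main obstacle will be the partition clause $V \amalg W = \amalg_{u \in U} P_u$, because an element $v \in V$ may appear in several pairs $(v, w_1), (v, w_2), \ldots$ when $f(v)$ has multiple $g$-preimages, or in none at all when $f(v) \notin g(W)$, so the naive assignment $P_u = \{v, w\}$ does not give a set partition on the nose. I would handle this by invoking the right Ore condition reformulation of the hereditary condition mentioned in \cite{KMoManin}, under which the existence of pullbacks in $\FinSet$ together with their compatibility with disjoint unions suffices to produce the required decomposition data; a literal partition can then be extracted by choosing sections of the projections $U \to V$ and $U \to W$, with the remaining verification reducing to the standard fact that pullbacks in $\FinSet$ distribute over $\amalg$.
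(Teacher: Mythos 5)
The paper gives no proof of this proposition (it is cited from \cite{KMoManin}), so your argument must stand on its own, and it does not: the proposed basis of irreducibles is incompatible with the monoidal structure of $\Span$. In a UFC the decomposition $\phi\simeq\bigotimes_v\phi_v$ is taken with respect to the monoidal product of $\M$, which for $\Span$ is disjoint union of all three sets of a span. The word $\boxtimes_{v\in V}(S\leftarrow\{v\}\to T)$ is therefore sent by $\jmath^{\boxtimes}$ to a span with source $\coprod_{v\in V}S\cong V\times S$ and target $V\times T$, not to $S\leftarrow V\to T$: restricting the apex to a point does not restrict the feet, so this is not a factorization of the original span. (The fiberwise decomposition you are imitating works for $\Cospan$ precisely because there the legs point into the middle object, so its fibers do partition the feet.) Concretely, $\jmath^{\boxtimes}$ built from single-apex spans is not essentially surjective: the span $\{*\}\leftarrow\{a,b\}\to\{*\}$, two parallel edges in the black-and-white-graph picture of Section~\ref{sec:black-white-graphs}, is $\otimes$-indecomposable but is not a single-apex span. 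The irreducibles are forced to be the $\otimes$-indecomposable morphisms, i.e.\ the connected spans (connected components of the bipartite graph on $S\amalg T$ with edge set $V$) --- this is what the pictures in Sections~\ref{sec:black-white-graphs} and~\ref{sec:span-plus} depict --- together with the same care about components with empty apex that the paper takes for the lone morphisms of $\Cospan$.

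The hereditary verification inherits this problem and adds another: in the hereditary condition, $V$ and $W$ index the irreducible tensor factors of $\phi_0$ and $\phi_1$, not the elements of their apexes, so the partition $V\amalg W=\amalg_{u}P_u$ is a grouping of components, not of apex points. The difficulty you correctly observe --- that an apex point $v$ can occur in several pairs $(v,w)$ or in none --- is a symptom of having set the problem up on the wrong index sets, and neither proposed repair closes the gap: appealing to the right Ore reformulation is circular (that reformulation \emph{is} the hereditary condition), and choosing sections of $U\to V$ and $U\to W$ yields functions rather than a partition, let alone the required factorizations $\phi_u=\phi_{1,u}\circ\phi_{0,u}$ with the prescribed tensor decompositions. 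What actually has to be argued is a connectivity statement: each connected component of $X\leftarrow V\times_Y W\to Z$ determines, through the shared points of $Y$, a block of components of $\phi_0$ and of $\phi_1$, and one must check that composing the corresponding sub-spans reproduces that component. That is where the content of the proposition lies, and it is absent from your argument.
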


\subsection{Relations}

Classically, a relation between two sets $X$ and $Y$ is a subset $R \subseteq X \times Y$.
From a categorical point of view, an injection $r: R \rightarrowtail X \times Y$ determines a span $X \leftarrow R \rightarrow Y$, so relations can be understood as a subcategory of $\Span$.

\begin{prop}[well-known]
  Let $r_0: R_0 \rightarrowtail X \times Y$ and $r_1: R_1 \rightarrowtail Y \times Z$ be relations with the following associated spans:
  \begin{align*}
    X \overset{a_0}{\leftarrow} R_0 \overset{b_0}{\rightarrow} Y \\
    Y \overset{a_1}{\leftarrow} R_1 \overset{b_1}{\rightarrow} Z 
  \end{align*}
  The composition $(a_1, b_1) \circ (a_0, b_0)$ of spans is the same as the span associated to the composition $r_1 \circ r_0$ of relations.
\end{prop}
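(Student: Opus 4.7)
The plan is to compute both sides explicitly as subsets (respectively, as spans) of $X \times Z$ and compare. Writing $r_0 = (a_0, b_0)$ and $r_1 = (a_1, b_1)$, the classical composite $r_1 \circ r_0 \subseteq X \times Z$ is the subset
\begin{equation*}
  \{(x,z) \in X \times Z : \exists\, y \in Y, \ (x,y) \in R_0 \text{ and } (y,z) \in R_1\}.
\end{equation*}
Its associated span is $X \leftarrow r_1 \circ r_0 \rightarrow Z$ with the evident projections. On the other hand, the span composition is computed via the pullback $R_0 \times_Y R_1 = \{(u,v) \in R_0 \times R_1 : b_0(u) = a_1(v)\}$ with legs $a_0 \circ \pi_0 : R_0 \times_Y R_1 \to X$ and $b_1 \circ \pi_1 : R_0 \times_Y R_1 \to Z$.

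First I would construct a comparison map $\Phi : R_0 \times_Y R_1 \to X \times Z$ by $\Phi(u,v) = (a_0(u), b_1(v))$, and verify it factors through $r_1 \circ r_0$: any $(u,v)$ in the pullback witnesses $y = b_0(u) = a_1(v)$ with $(a_0(u), y) \in R_0$ and $(y, b_1(v)) \in R_1$, so $\Phi(u,v) \in r_1 \circ r_0$. Conversely, any element $(x,z) \in r_1 \circ r_0$ comes with a witness $y$, which by injectivity of $r_0$ and $r_1$ determines unique elements $u \in R_0$ and $v \in R_1$ with $(a_0(u), b_0(u)) = (x,y)$ and $(a_1(v), b_1(v)) = (y,z)$; the pair $(u,v)$ then lies in the pullback and maps to $(x,z)$.

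The crux is the injectivity of $\Phi$, or more precisely the bijectivity of the induced map $R_0 \times_Y R_1 \to r_1 \circ r_0$. Suppose $\Phi(u,v) = \Phi(u',v')$. Then $a_0(u) = a_0(u')$, and $b_0(u) = a_1(v) = a_1(v') = b_0(u')$ using the pullback condition. Since $r_0 = (a_0, b_0)$ is injective, $u = u'$; symmetrically $r_1 = (a_1,b_1)$ injective gives $v = v'$. Hence $\Phi$ is a bijection onto $r_1 \circ r_0$, so the two spans $X \leftarrow R_0 \times_Y R_1 \to Z$ and $X \leftarrow r_1 \circ r_0 \to Z$ are isomorphic via $\Phi$, and therefore equal as morphisms in $\Span$.

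The main obstacle, such as it is, is bookkeeping: keeping track of which pair-projections witness a given element and consistently exploiting the injectivity hypothesis on $r_0$ and $r_1$. No deep input beyond the universal property of the pullback in $\FinSet$ and the set-theoretic description of relational composition is required.
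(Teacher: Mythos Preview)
Your injectivity argument for $\Phi$ has a genuine gap. From $\Phi(u,v) = \Phi(u',v')$ you correctly obtain $a_0(u) = a_0(u')$ and $b_1(v) = b_1(v')$, and the pullback condition gives $b_0(u) = a_1(v)$ and $b_0(u') = a_1(v')$. But the middle equality in your chain $b_0(u) = a_1(v) = a_1(v') = b_0(u')$ is unjustified: nothing forces the two witnessing $Y$-values to coincide. In fact $\Phi$ need not be injective. Take $X = \{x\}$, $Y = \{y_1,y_2\}$, $Z = \{z\}$, $R_0 = \{(x,y_1),(x,y_2)\}$, $R_1 = \{(y_1,z),(y_2,z)\}$. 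Then $R_0 \times_Y R_1$ has two elements while $r_1 \circ r_0 = \{(x,z)\}$ has one, so the span composite and the span associated to the relation composite are \emph{not} isomorphic as morphisms of $\Span$.

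This is not a bookkeeping slip but a real obstruction: the statement, read literally as equality in $\Span$, is false. What is actually well known is that $r_1 \circ r_0$ is the \emph{image} of the map $(a_0\pi_0,\,b_1\pi_1): R_0 \times_Y R_1 \to X \times Z$, so the category of relations arises as a quotient of $\Span$ via image factorization rather than as a subcategory. The paper's own proof glosses over the same point: it writes $r_2 : R_2 \rightarrowtail R_0 \times R_1 \rightarrowtail X \times Z$, but the second arrow (projecting away the two $Y$-coordinates) is not monic, and the counterexample above shows the composite is not monic on $R_2$ either. Your surjectivity paragraph is correct and does establish that the image of $\Phi$ is exactly $r_1 \circ r_0$; that is the salvageable content here.
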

\begin{proof}
  Consider the composition of $X \overset{a_0}{\leftarrow} R_0 \overset{b_0}{\rightarrow} Y$ and $Y \overset{a_1}{\leftarrow} R_1 \overset{b_1}{\rightarrow} Z$.
  Let $R_2 = \{ (x_0, x_1) \in R_0 \times R_1 : b_0(x_0) = a_1(x_1) \}$ so that $R_0 \overset{\pi_0}{\leftarrow} R_2 \overset{\pi_1}{\rightarrow} R_1$ forms the pullback of $R_0 \overset{b_0}{\rightarrow} Y \overset{a_1}{\leftarrow} R_1$.
  Hence the following span represents the composition:
  \begin{equation}
    X
    \overset{a_0}{\leftarrow}
    R_0
    \overset{\pi_0}{\leftarrow}
    R_2
    \overset{\pi_1}{\rightarrow}
    R_1
    \overset{b_1}{\rightarrow}
    Z
  \end{equation}
  This gives $r_2: R_2 \rightarrowtail R_0 \times R_1 \rightarrowtail X \times Z$.
  The relation $r_2$ is precisely the composition $r_1 \circ r_0$.
\end{proof}

\begin{prop}
  The category of relations is a hereditary unique factorization category.
\end{prop}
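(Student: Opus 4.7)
My plan is to mimic the strategy used for $\Span$ and for $\Cospan^+$ in the paper, adapted to the jointly monic setting. First I will pin down the UFC structure on $\mathrm{Rel}$, then verify heredity by component tracking.

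I would equip $\mathrm{Rel}$ with disjoint union as the symmetric monoidal product, with unit the empty set. For the basis of morphisms I would take the groupoid $\mathcal{P}$ of \emph{connected relations}: a relation $R\subseteq X\times Y$ is connected if the equivalence relation on $X\sqcup Y$ generated by $x\sim y$ for each $(x,y)\in R$ has a single class. The standard graph-theoretic decomposition of a bipartite graph into components shows that every relation is canonically a disjoint union of connected relations, which gives the equivalence $\jmath^\boxtimes:\mathcal{P}^\boxtimes\stackrel{\sim}{\to}\Iso(\mathrm{Rel}\downarrow\mathrm{Rel})$. The slice categories of $\mathrm{Rel}$ are essentially small since subsets of $X\times Y$ are finite, so both UFC axioms hold.

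For heredity, given composable relations $\phi_0:X\to Y$ and $\phi_1:Y\to Z$ with $\phi=\phi_1\circ\phi_0$ and chosen decompositions into irreducibles $\phi_0=\coprod_{v\in V}\phi_{0,v}$, $\phi_1=\coprod_{w\in W}\phi_{1,w}$, $\phi=\coprod_{u\in U}\phi_u$, I would construct the partition as follows. Form the tripartite graph $\Gamma$ on $X\sqcup Y\sqcup Z$ whose edges are the pairs in $\phi_0\cup\phi_1$, and let $\{C_\alpha\}$ be its connected components. Each $C_\alpha$ restricts to unions of some $\phi_{0,v}$'s and some $\phi_{1,w}$'s, because the components of $\phi_0$ (resp.\ $\phi_1$) refine the $X\sqcup Y$-part (resp.\ $Y\sqcup Z$-part) of $\Gamma$. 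A component $C_\alpha$ that meets both $X$ and $Z$ via a length-two path through $Y$ contributes a single connected relation in $X\sqcup Z$, namely one of the $\phi_u$'s; the $v$'s and $w$'s lying in $C_\alpha$ are placed in the corresponding $P_u$. The fact that relation composition is image-factored span composition then guarantees $\phi_{1,u}\circ\phi_{0,u}=\phi_u$ on the nose.

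The main obstacle is handling components $C_\alpha$ whose $Y$-vertices fail to link $X$ to $Z$, since such $C_\alpha$ produces no edge in $\phi$ but may still contribute loose $x$'s or loose $z$'s that appear as separate irreducibles of $\phi$. These ``dead-end'' $v$'s and $w$'s must be absorbed into some $P_u$, and one must verify that the absorption is consistent with both the source/target matching required for $\phi_{1,u}\circ\phi_{0,u}$ to be defined and with the requirement that the composition actually equals $\phi_u$. The key input is that in $\mathrm{Rel}$ (as opposed to $\Span$), the image factorization collapses any spurious multiplicity, so a dead-end piece $\phi_{0,v}$ paired with a matching piece $\phi_{1,w}$ contributes only whatever loose $X$- and $Z$-elements are visible in $\phi$. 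Once this bookkeeping is carried out, heredity follows, and the proposition is proved.
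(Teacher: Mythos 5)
There is a genuine gap, and it sits exactly where you wave your hands: the component-tracking argument for heredity does not go through. Your claim that a connected component $C_\alpha$ of the tripartite graph on $X\amalg Y\amalg Z$ ``contributes a single connected relation in $X\sqcup Z$, namely one of the $\phi_u$'s'' is false, because composing through $Y$ can \emph{disconnect}. Take $X=\{x_1,x_2\}$, $Y=\{y_1,y_2,y_3\}$, $Z=\{z_1,z_2\}$, $\phi_0=\{(x_1,y_1),(x_1,y_2),(x_2,y_2),(x_2,y_3)\}$ and $\phi_1=\{(y_1,z_1),(y_3,z_2)\}$. The tripartite graph is a single connected path $z_1\,y_1\,x_1\,y_2\,x_2\,y_3\,z_2$, and $\phi_0$ is a single irreducible in your basis; but $\phi=\phi_1\circ\phi_0=\{(x_1,z_1),(x_2,z_2)\}$ decomposes into two irreducibles, so $U$ has two elements while $V$ has one. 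Any partition $V\amalg W=P_{u_1}\amalg P_{u_2}$ leaves one block with $P_u\cap V=\emptyset$; for that $u$ the required $\phi_{0,u}$ is the empty tensor product $\mathrm{id}_{\emptyset}$, so $\phi_{1,u}\circ\phi_{0,u}$ has empty source and cannot equal $\phi_u$, whose source is a nonempty singleton. So what you describe as ``bookkeeping'' for dead-end pieces is not bookkeeping at all: with connected relations as the basis, the partition demanded by the hereditary condition simply need not exist, and no absorption of loose elements repairs this. The image factorization peculiar to $\mathrm{Rel}$ does not help either, since the composite above is already jointly injective.

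This is also not the paper's route. The paper's entire proof is a one-line reduction: by the preceding proposition, composition of relations agrees with composition of the associated spans, so relations form a monoidal subcategory of $\Span$, and the unique factorization and hereditary structure are inherited from the result of Kaufmann--Monaco that $\Span$ is a hereditary UFC. In other words, the paper deliberately outsources the hard part (the choice of basis for $\Span$ and the verification of heredity) to the cited work rather than re-deriving it. If you want a self-contained argument, you must first identify the basis of morphisms actually used for $\Span$ there and check that it restricts to relations; the from-scratch connected-component analysis as you have set it up cannot be completed, as the example above shows.
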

\begin{proof}
  Since relations form a monoidal subcategory of $\Span$, this result follows from the fact that $\Span$ is a unique factorization category.
\end{proof}

\subsection{Matrices}
\label{sec:matrices}

We can also think of $\Span$ as a categorified version of a matrix.
To see this, note that each span $S \leftarrow V \rightarrow T$ corresponds to a map $M: V \to S \times T$ by the universal property of the product.
The map $M$ is determined by its fibers $M_{s,t} = M^{-1}(s,t)$, so $M$ can be understood as a sort of matrix where the coordinates are sets rather than numbers.
This analogy is strengthened by the following fact.

\begin{prop}[well-known]
  Given the maps $M: V \to A \times B$ and $N: U \to B \times C$, let $M \circ N$ denote their composition as a pair of spans.
  Then the coordinates of $M \circ N$ are given by a categorified matrix multiplication:
  \begin{equation}
    (M \circ N)_{a,c}
    \cong \coprod_{b \in B} M_{a,b} \times N_{b,c}
  \end{equation}
\end{prop}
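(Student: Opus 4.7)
The plan is to unwind the pullback definition of span composition and then partition the resulting fiber by its coordinate in $B$. This is a pure diagram-chase, so the only real task is to keep the notation for projections and fibers straight.

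First, I would translate the given data back into spans. The map $M: V \to A \times B$ is the universal map corresponding to the span $A \xleftarrow{\pi_A M} V \xrightarrow{\pi_B M} B$, and similarly $N: U \to B \times C$ corresponds to $B \xleftarrow{\pi_B N} U \xrightarrow{\pi_C N} C$. By definition of span composition, $M \circ N$ is represented by $A \leftarrow V \times_B U \rightarrow C$, where $V \times_B U = \{(v,u) \in V \times U : \pi_B M(v) = \pi_B N(u)\}$, and the outer maps to $A$ and $C$ are $(v,u) \mapsto \pi_A M(v)$ and $(v,u) \mapsto \pi_C N(u)$ respectively.

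Next, I would compute the fiber $(M \circ N)_{a,c}$ directly. By definition this is the set of $(v,u) \in V \times_B U$ such that $\pi_A M(v) = a$ and $\pi_C N(u) = c$. Every such pair has a well-defined common $B$-coordinate $b := \pi_B M(v) = \pi_B N(u)$, so the fiber decomposes as the disjoint union, over $b \in B$, of the subsets with that fixed value of $b$.

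Finally, I would identify each summand. For fixed $b$, the condition $M(v) = (a,b)$ is exactly $v \in M_{a,b}$, and $N(u) = (b,c)$ is exactly $u \in N_{b,c}$; the pullback constraint is automatic because both coordinates are $b$. Hence the summand indexed by $b$ is $M_{a,b} \times N_{b,c}$, giving the desired bijection $(M \circ N)_{a,c} \cong \coprod_{b \in B} M_{a,b} \times N_{b,c}$.

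There is no genuine obstacle here; the only thing to be careful about is that the pullback constraint is recorded in the indexing variable $b$ once one partitions the fiber, so that the disjoint union really is over all of $B$ (with many summands possibly empty) rather than over some subset determined by $a$ and $c$.
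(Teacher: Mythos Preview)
Your argument is correct. It differs from the paper's proof in style: you work elementwise, writing the pullback explicitly as a subset of $V\times U$ and then partitioning the fiber over $(a,c)$ by its common $B$-coordinate. The paper instead argues diagrammatically, first identifying $W_{ac}$ as a pullback via the pasting lemma, then producing for each $b\in B$ a pullback square exhibiting $V_{ab}\times U_{bc}$ as the fiber of $W_{ac}\to B$ over $b$, and finally invoking pullback stability of coproducts to assemble the $\coprod_{b\in B}$. Your approach is shorter and entirely adequate in $\FinSet$; the paper's route has the advantage that it is written in a way that would transplant to any extensive category with pullbacks, since it never names elements. For the purposes of this proposition, which is stated for finite sets, your version is perfectly fine and arguably clearer.
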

\begin{proof}
  Let $A \leftarrow V \rightarrow B$ be the span for $M$ and let $B \leftarrow U \rightarrow C$ be the span for $N$.
  Write the pullback as $V \leftarrow W \rightarrow U$.
  As shorthand, we will use subscripts for preimages.
  For example, $V_a$ is the preimage of $a \in A$ under the map $V \to A$.
  We will also use double subscripts for intersections, so $V_{ab} = V_a \cap V_b$.

  First, a straightforward pullback argument shows that the 2-by-2 square below is a pullback:
  \begin{equation}
    \begin{tikzcd}[sep = small]
      W_{ac} \arrow[r] \arrow[d] & W_a \arrow[r] \arrow[d] & V_a \arrow[r] \arrow[d] & \{a\} \arrow[d] \\
      W_c \arrow[r] \arrow[d]  & W \arrow[d] \arrow[r] & V \arrow[r] \arrow[d] & A \\
      U_c \arrow[d] \arrow[r]    & U \arrow[d] \arrow[r]   & B                       &                 \\
      \{c\} \arrow[r]            & C                       &                         &                
    \end{tikzcd}
  \end{equation}
  Then for each point $b \in B$, the dashed arrow is uniquely determined by the universal property of the pullback:
  \begin{equation}
    \label{cube}
    \begin{tikzcd}[sep=tiny]
      V_{ab} \times U_{bc} \arrow[dd] \arrow[rr] \arrow[rd, dashed]
      & & V_{ab} \arrow[rd] \arrow[dd] & \\
      & W_{ac}
      \arrow[rr, crossing over]
      & & V_a \arrow[dd] \\
      U_{bc} \arrow[rr] \arrow[rd]
      & & \{b\} \arrow[rd] & \\
      & U_c \arrow[rr] \arrow[from=uu, crossing over] & & B
    \end{tikzcd}
  \end{equation}

  Now consider the diagrams below.
  Since (1) and (2) are pullbacks, the rectangle (1,2) is also a pullback.
  By commutativity of \eqref{cube}, rectangle (1,2) is the same as (3,4).
  Since (3,4) and (4) are pullbacks, (3) is also a pullback.
  \begin{equation}
    \begin{tikzcd}[column sep = small]
      {V_{ab} \times U_{bc}} & {V_{ab}} & {V_a} & {V_{ab} \times U_{bc}} & {W_{ac}} & {V_a} \\
      {U_{bc}} & {\{b\}} & B & {U_{bc}} & {U_c} & B
      \arrow[from=1-2, to=2-2]
      \arrow[from=1-2, to=1-3]
      \arrow[from=2-2, to=2-3]
      \arrow[from=1-3, to=2-3]
      \arrow[from=1-1, to=1-2]
      \arrow[from=2-1, to=2-2]
      \arrow[from=1-1, to=2-1]
      \arrow[from=1-4, to=2-4]
      \arrow[from=2-4, to=2-5]
      \arrow[from=2-5, to=2-6]
      \arrow[from=1-6, to=2-6]
      \arrow[from=1-5, to=2-5]
      \arrow[from=1-5, to=1-6]
      \arrow[dashed, from=1-4, to=1-5]
      \arrow["{(1)}"{description}, draw=none, from=1-1, to=2-2]
      \arrow["{(2)}"{description}, draw=none, from=1-2, to=2-3]
      \arrow["{(3)}"{description}, draw=none, from=1-4, to=2-5]
      \arrow["{(4)}"{description}, draw=none, from=1-5, to=2-6]
    \end{tikzcd}
  \end{equation} 
  Diagram (5) below is a pullback, hence the diagram (3,5) is a pullback for all $b \in B$.
  \begin{equation}
    \label{eq:3}
    \begin{tikzcd}
      {V_{ab} \times U_{bc}} & {U_{bc}} & {\{b\}} \\
      {W_{ac}} & {U_c} & B
      \arrow[from=1-2, to=1-3]
      \arrow[from=1-3, to=2-3]
      \arrow[from=1-2, to=2-2]
      \arrow[from=2-2, to=2-3]
      \arrow[from=2-1, to=2-2]
      \arrow[from=1-1, to=1-2]
      \arrow["{(5)}"{description}, draw=none, from=1-2, to=2-3]
      \arrow[from=1-1, to=2-1]
      \arrow["{(3)}"{description}, draw=none, from=1-1, to=2-2]
    \end{tikzcd}    
  \end{equation}
  By pullback stability, the following diagram is also a pullback.
  \begin{equation}
    \begin{tikzcd}
      \coprod_{b \in B}V_{ab} \times U_{bc} \arrow[d] \arrow[r] & \coprod_{b \in B}\{b\} \arrow[d] \\
      W_{ac} \arrow[r] & B                               
    \end{tikzcd}  
  \end{equation}
  Since pullbacks preserve isomorphisms, we get $W_{ac} \cong \coprod_{b \in B}V_{ab} \times U_{bc}$, as desired.
\end{proof}

\subsection{Black and white graphs}
\label{sec:black-white-graphs}

We can also think of the span $V_w \overset{w}{\leftarrow} E \overset{b}{\rightarrow} V_b$ as a black and white graph where the edge $e \in E$ is connected to a white vertex $w(e) \in V_w$ and a black vertex $b(e) \in V_b$.
To define a composition of these black and white graphs, suppose we have the following black and white graphs such that $V_b = V'_w$:
\begin{center}
  \begin{tabular}{ccc}
    $\Gamma = (V_w \leftarrow E \rightarrow V_b)$
    & and
    & $\Gamma' = (V'_w \leftarrow E' \rightarrow V'_b)$ 
  \end{tabular}  
\end{center}
Define $\Gamma \wedge \Gamma'$ to be the following pullback:
\begin{equation}
  \begin{tikzcd}
    & & E^{\wedge} = E {}_{\del_b}{\times}{}_{\del'_w} E' \arrow[ld] \arrow[rd] & & \\
    & E \arrow[ld] \arrow[rd] & & E' \arrow[ld] \arrow[rd] & \\
    V_w & & V_b = V'_w & & V'_b
  \end{tikzcd}
\end{equation}

This new graph has $V_w$ as its white vertices and $V'_b$ as its black vertices.
There is an edge between vertices $w \in V_w$ and $b' \in V'_b$ in the new graph whenever there is a vertex $x \in V_b = V'_w$ with an edge between $x$ and $w$ in the first graph and an edge between $x$ and $b'$ in the second graph.

\begin{ex}
  Consider the $\wedge$-operation depicted in \eqref{orion-composition} between a simplified version of the constellation Orion with edges labeled by letters and another black and white graph with edges labeled by numbers.
  The black and white vertices that are matched are depicted as squares.
  \begin{equation}
    \label{orion-composition}
    \resizebox{0.85\textwidth}{!}{
    \begin{tikzpicture}[baseline=(current  bounding  box.center)]
      \node [style=BlackSq] (0) at (-3.5, 2.25) {};
      \node [style=BlackSq] (1) at (-4.25, 0) {};
      \node [style=BlackSq] (2) at (-2.75, 0) {};
      \node [style=BlackSq] (3) at (-3.5, -2.25) {};
      \node [style=BlackSq] (4) at (-0.75, 2) {};
      \node [style=BlackSq] (5) at (-6.25, 2.25) {};
      \node [style=White] (7) at (-4.75, 1.25) {};
      \node [style=White] (8) at (-2.25, 1.25) {};
      \node [style=White] (9) at (-5, -2) {};
      \node [style=White] (10) at (-2, -2) {};
      \node [style=White] (11) at (-0.25, 0.5) {};
      \node [style=White] (12) at (-1.5, 3) {};
      \node [style=White] (14) at (-6.75, 3.5) {};
      \node [style=none] (15) at (-6.75, 2.75) {a};
      \node [style=none] (16) at (-5.75, 1.5) {b};
      \node [style=none] (17) at (-4.75, 0.5) {c};
      \node [style=none] (18) at (-5, -1) {d};
      \node [style=none] (19) at (-4.5, -2.5) {e};
      \node [style=none] (20) at (-2.5, -2.5) {f};
      \node [style=none] (21) at (-2, -1) {g};
      \node [style=none] (22) at (-2.25, 0.5) {h};
      \node [style=none] (23) at (-1.25, 1.25) {i};
      \node [style=none] (24) at (0, 1.25) {j};
      \node [style=none] (25) at (-0.75, 2.75) {k};
      \node [style=none] (26) at (-3, 1.5) {l};
      \node [style=none] (27) at (-4, 1.5) {m};
      \node [style=WhiteSq] (31) at (6.75, 2.25) {};
      \node [style=WhiteSq] (32) at (6, 0) {};
      \node [style=WhiteSq] (33) at (7.5, 0) {};
      \node [style=WhiteSq] (34) at (6.75, -2.25) {};
      \node [style=WhiteSq] (35) at (9.5, 2) {};
      \node [style=WhiteSq] (36) at (4, 2.25) {};
      \node [style=Black] (38) at (6.75, 0) {};
      \node [style=Black] (39) at (4.5, 2.75) {};
      \node [style=Black] (40) at (3.5, 1.75) {};
      \node [style=Black] (42) at (6.75, 2.75) {};
      \node [style=Black] (43) at (6.75, -2.75) {};
      \node [style=none] (44) at (6.25, -2.5) {4};
      \node [style=none] (45) at (3.5, 2.25) {2};
      \node [style=none] (46) at (4, 2.75) {1};
      \node [style=none] (47) at (7.25, 2.5) {6};
      \node [style=none] (48) at (6.25, -0.5) {3};
      \node [style=none] (49) at (7.25, -0.5) {5};
      \node [style=none] (50) at (-7.5, 4.5) {};
      \node [style=none] (51) at (0.5, 4.5) {};
      \node [style=none] (52) at (0.5, -3.5) {};
      \node [style=none] (53) at (-7.5, -3.5) {};
      \node [style=none] (54) at (3, 4.5) {};
      \node [style=none] (55) at (3, -3.5) {};
      \node [style=none] (56) at (10, 4.5) {};
      \node [style=none] (57) at (10, -3.5) {};
      \node [style=none] (58) at (1.25, -0.25) {};
      \node [style=none] (59) at (2.25, -0.25) {};
      \node [style=none] (60) at (1.75, 1.25) {};
      \draw (7) to (1);
      \draw (1) to (9);
      \draw (9) to (3);
      \draw (3) to (10);
      \draw (10) to (2);
      \draw (2) to (8);
      \draw (8) to (0);
      \draw (0) to (7);
      \draw (5) to (7);
      \draw (14) to (5);
      \draw (8) to (4);
      \draw (12) to (4);
      \draw (4) to (11);
      \draw (32) to (38);
      \draw (38) to (33);
      \draw (39) to (36);
      \draw (36) to (40);
      \draw (42) to (31);
      \draw (34) to (43);
      \draw [dashed] (50.center) to (53.center);
      \draw [dashed] (53.center) to (52.center);
      \draw [dashed] (52.center) to (51.center);
      \draw [dashed] (51.center) to (50.center);
      \draw [dashed] (54.center) to (55.center);
      \draw [dashed] (55.center) to (57.center);
      \draw [dashed] (57.center) to (56.center);
      \draw [dashed] (56.center) to (54.center);
      \draw [thick] (60.center) to (58.center);
      \draw [thick] (60.center) to (59.center);
    \end{tikzpicture}
    }
  \end{equation}

  To evaluate this, one can imagine superimposing the two graphs so that the squares are matched and then connecting the black and white vertices accordingly to obtain \eqref{orion-result}.
  The elbow of the arm holding the club gets ``cloned'', Orion's belt gets tightened, and the black vertex that is part of the shield is removed.
  \begin{equation}
    \label{orion-result}
    \resizebox{0.40\textwidth}{!}{
    \begin{tikzpicture}[baseline=(current  bounding  box.center)]
      \node [style=White] (57) at (-1.25, 1.25) {};
      \node [style=White] (58) at (1.25, 1.25) {};
      \node [style=White] (59) at (-1.5, -2) {};
      \node [style=White] (60) at (1.5, -2) {};
      \node [style=White] (61) at (3.25, 0.5) {};
      \node [style=White] (62) at (2, 3) {};
      \node [style=White] (63) at (-3.25, 3.5) {};
      \node [style=Black] (83) at (0, 0) {};
      \node [style=Black] (84) at (-2.25, 2.75) {};
      \node [style=Black] (85) at (-3.25, 1.75) {};
      \node [style=Black] (86) at (0, 2.25) {};
      \node [style=Black] (87) at (0, -2.25) {};
      \node [style=none] (88) at (-3.75, 2.5) {a2};
      \node [style=none] (89) at (-2.25, 1) {b2};
      \node [style=none] (90) at (-1, 0.5) {c3};
      \node [style=none] (91) at (-1.25, -1) {3d};
      \node [style=none] (92) at (-0.75, -2.5) {e4};
      \node [style=none] (93) at (1, -2.5) {f4};
      \node [style=none] (94) at (1.25, -1) {5g};
      \node [style=none] (95) at (1, 0.5) {5h};
      \node [style=none] (96) at (0.5, 1.5) {l6};
      \node [style=none] (97) at (-0.5, 1.5) {m6};
      \node [style=none] (98) at (-1.5, 2.25) {b1};
      \node [style=none] (99) at (-2.5, 3.5) {a1};
      \draw (63) to (85);
      \draw (85) to (57);
      \draw (63) to (84);
      \draw (84) to (57);
      \draw (57) to (86);
      \draw (86) to (58);
      \draw (57) to (83);
      \draw (58) to (83);
      \draw (83) to (59);
      \draw (59) to (87);
      \draw (87) to (60);
      \draw (60) to (83);
    \end{tikzpicture}
    }
  \end{equation}
\end{ex}

\subsection{Bialgebras}
\label{sec:bialgebra}

The two middle arrows in a composition of two spans forms a cospan.
We know that cospans factor as $\{V_y \rightarrow \{y\} \leftarrow V'_y\}_{y \in Y}$.
Hence it suffices to look at these basic cospans to understand the composition of spans.
For instance, the pullback of $\{a,b,c\} \rightarrow pt \leftarrow \{1, 2\}$ is the span $\{a,b,c\} \leftarrow \{a,b,c\} \times \{1,2\} \rightarrow \{1, 2\}$.
Writing out the elements gives the following picture:
\begin{equation*}
  \begin{tikzcd}[column sep = 0.1cm]
    & a &&& b &&& c \\
    {(a,1)} && {(a,2)} & {(b,1)} && {(b,2)} & {(c,1)} && {(c,2)} \\
    &&& 1 && 2
    \arrow[from=2-1, to=1-2]
    \arrow[from=2-3, to=1-2]
    \arrow[from=2-4, to=1-5]
    \arrow[from=2-6, to=1-5]
    \arrow[from=2-1, to=3-4]
    \arrow[from=2-3, to=3-6]
    \arrow[from=2-4, to=3-4]
    \arrow[from=2-6, to=3-6]
    \arrow[from=2-7, to=1-8]
    \arrow[from=2-9, to=1-8]
    \arrow[from=2-7, to=3-4]
    \arrow[from=2-9, to=3-6]
  \end{tikzcd}
\end{equation*}
One can think of this picture as the complete graph between vertices $\{a, b, c\}$ and vertices $\{1, 2\}$ with each pair connected by an ``edge'' in $\{a, b, c\} \times \{1, 2\}$ exactly once.

This complete graph picture is a natural way to describe how multiplication and comultiplication interact in a bialgebra.
For example, the relation $\Delta \circ \mu = (\mu \otimes \mu) \circ (\id \otimes C \otimes \id) \circ (\Delta \otimes \Delta)$ for bialgebras can be depicted graphically by Diagram~\eqref{eq:alg-coalg-compat} where $\mu$ is represented as a circle and $\Delta$ is represented as a square.
\begin{equation}
  \label{eq:alg-coalg-compat}
  \begin{tikzpicture}[baseline=(current  bounding  box.center)]
    \node [style=WhiteSq] (0) at (-2, 0.25) {};
    \node [style=White] (1) at (-2, 0.75) {};
    \node [style=none] (2) at (-2.25, 1.25) {};
    \node [style=none] (3) at (-1.75, 1.25) {};
    \node [style=none] (4) at (-2.25, -0.25) {};
    \node [style=none] (5) at (-1.75, -0.25) {};
    \node [style=none] (6) at (-2.25, 1.75) {};
    \node [style=none] (7) at (-1.75, 1.75) {};
    \node [style=none] (8) at (-2.25, -0.75) {};
    \node [style=none] (9) at (-1.75, -0.75) {};
    \node [style=WhiteSq] (12) at (0.25, 1.25) {};
    \node [style=WhiteSq] (13) at (1.25, 1.25) {};
    \node [style=White] (14) at (0.25, -0.25) {};
    \node [style=White] (15) at (1.25, -0.25) {};
    \node [style=none] (16) at (0.25, 1.75) {};
    \node [style=none] (17) at (1.25, 1.75) {};
    \node [style=none] (18) at (0.25, -0.75) {};
    \node [style=none] (19) at (1.25, -0.75) {};
    \node [style=none] (20) at (0, 0.5) {};
    \node [style=none] (21) at (0.5, 0.5) {};
    \node [style=none] (22) at (1, 0.5) {};
    \node [style=none] (23) at (1.5, 0.5) {};
    \draw (1) to (0);
    \draw (2.center) to (1);
    \draw (3.center) to (1);
    \draw (0) to (4.center);
    \draw (0) to (5.center);
    \draw (6.center) to (2.center);
    \draw (7.center) to (3.center);
    \draw (4.center) to (8.center);
    \draw (5.center) to (9.center);
    \draw (16.center) to (12);
    \draw (17.center) to (13);
    \draw (14) to (18.center);
    \draw (15) to (19.center);
    \draw (12) to (22.center);
    \draw (13) to (21.center);
    \draw (12) to (20.center);
    \draw (13) to (23.center);
    \draw (23.center) to (15);
    \draw (22.center) to (15);
    \draw (21.center) to (14);
    \draw (20.center) to (14);
  \end{tikzpicture}
\end{equation}
Conversely, we can recover the complete graph picture from this relation by using associativity:
\begin{equation}
  \resizebox{0.85\textwidth}{!}{
    \begin{tikzpicture}[baseline=(current  bounding  box.center)]
      \node [style=none] (0) at (-5.25, 0.5) {};
      \node [style=none] (1) at (-4.25, 0.5) {};
      \node [style=none] (2) at (-5.25, -0.5) {};
      \node [style=none] (3) at (-4.25, -0.5) {};
      \node [style=WhiteSq] (6) at (-3, 0.75) {};
      \node [style=WhiteSq] (7) at (-2, 0.5) {};
      \node [style=White] (8) at (-3, -0.5) {};
      \node [style=White] (9) at (-2, -0.5) {};
      \node [style=none] (10) at (-2.75, 0) {};
      \node [style=none] (11) at (-2.25, 0) {};
      \node [style=none] (12) at (-3.25, 0) {};
      \node [style=none] (13) at (-1.75, 0) {};
      \node [style=White] (14) at (-4.25, 1) {};
      \node [style=none] (15) at (-4.5, 1.5) {};
      \node [style=none] (16) at (-4, 1.5) {};
      \node [style=White] (17) at (-2, 1) {};
      \node [style=none] (18) at (-2.25, 1.5) {};
      \node [style=none] (19) at (-1.75, 1.5) {};
      \node [style=White] (20) at (-4.75, 0.25) {};
      \node [style=WhiteSq] (21) at (-4.75, -0.25) {};
      \node [style=WhiteSq] (22) at (-0.75, 0.75) {};
      \node [style=White] (24) at (-0.75, -0.5) {};
      \node [style=White] (25) at (0.75, -0.5) {};
      \node [style=White] (26) at (-0.5, 0) {};
      \node [style=none] (27) at (0.5, 0) {};
      \node [style=none] (28) at (-1, 0) {};
      \node [style=White] (29) at (1, 0) {};
      \node [style=WhiteSq] (31) at (0.25, 1.5) {};
      \node [style=WhiteSq] (32) at (0.75, 1.5) {};
      \node [style=none] (33) at (-0.25, 0.75) {};
      \node [style=none] (34) at (0.25, 0.75) {};
      \node [style=none] (35) at (0.75, 0.75) {};
      \node [style=none] (36) at (1.25, 0.75) {};
      \node [style=none] (37) at (-5.25, 2) {};
      \node [style=none] (38) at (-3, 2) {};
      \node [style=none] (39) at (-0.75, 2) {};
      \node [style=none] (40) at (-4.5, 2) {};
      \node [style=none] (41) at (-4, 2) {};
      \node [style=none] (42) at (-2.25, 2) {};
      \node [style=none] (43) at (-1.75, 2) {};
      \node [style=none] (44) at (0.25, 2) {};
      \node [style=none] (45) at (0.75, 2) {};
      \node [style=none] (46) at (0.75, -1) {};
      \node [style=none] (47) at (-0.75, -1) {};
      \node [style=none] (48) at (-2, -1) {};
      \node [style=none] (49) at (-3, -1) {};
      \node [style=none] (50) at (-4.25, -1) {};
      \node [style=none] (51) at (-5.25, -1) {};
      \node [style=WhiteSq] (52) at (2.5, 1.5) {};
      \node [style=White] (53) at (2.5, -0.5) {};
      \node [style=White] (54) at (4, -0.5) {};
      \node [style=none] (57) at (2, 0.5) {};
      \node [style=WhiteSq] (59) at (3.25, 1.5) {};
      \node [style=WhiteSq] (60) at (4, 1.5) {};
      \node [style=none] (61) at (2.5, 0.5) {};
      \node [style=none] (62) at (3, 0.5) {};
      \node [style=none] (63) at (4, 0.5) {};
      \node [style=none] (64) at (4.5, 0.5) {};
      \node [style=none] (65) at (2.5, 2) {};
      \node [style=none] (66) at (3.25, 2) {};
      \node [style=none] (67) at (4, 2) {};
      \node [style=none] (68) at (4, -1) {};
      \node [style=none] (69) at (2.5, -1) {};
      \node [style=none] (70) at (3.5, 0.5) {};
      \node [style=none] (73) at (-7.25, -0.5) {};
      \node [style=none] (74) at (-6.25, -0.5) {};
      \node [style=White] (78) at (-6.75, 0.25) {};
      \node [style=WhiteSq] (79) at (-6.75, -0.25) {};
      \node [style=none] (80) at (-7.25, 2) {};
      \node [style=none] (81) at (-6.75, 2) {};
      \node [style=none] (82) at (-6.25, 2) {};
      \node [style=none] (83) at (-6.25, -1) {};
      \node [style=none] (84) at (-7.25, -1) {};
      \node [style=none] (85) at (-7.25, 1) {};
      \node [style=none] (86) at (-6.75, 1) {};
      \node [style=none] (87) at (-6.25, 1) {};
      \draw (7) to (10.center);
      \draw (6) to (11.center);
      \draw (7) to (13.center);
      \draw (6) to (12.center);
      \draw (12.center) to (8);
      \draw (10.center) to (8);
      \draw (11.center) to (9);
      \draw (13.center) to (9);
      \draw (17) to (7);
      \draw (18.center) to (17);
      \draw (19.center) to (17);
      \draw (15.center) to (14);
      \draw (16.center) to (14);
      \draw (14) to (1.center);
      \draw (1.center) to (20);
      \draw (0.center) to (20);
      \draw (20) to (21);
      \draw (21) to (2.center);
      \draw (21) to (3.center);
      \draw (22) to (27.center);
      \draw (22) to (28.center);
      \draw (28.center) to (24);
      \draw (26) to (24);
      \draw (27.center) to (25);
      \draw (29) to (25);
      \draw (31) to (35.center);
      \draw (32) to (34.center);
      \draw (31) to (33.center);
      \draw (32) to (36.center);
      \draw (36.center) to (29);
      \draw (35.center) to (29);
      \draw (33.center) to (26);
      \draw (26) to (34.center);
      \draw (3.center) to (50.center);
      \draw (2.center) to (51.center);
      \draw (37.center) to (0.center);
      \draw (40.center) to (15.center);
      \draw (41.center) to (16.center);
      \draw (38.center) to (6);
      \draw (42.center) to (18.center);
      \draw (43.center) to (19.center);
      \draw (8) to (49.center);
      \draw (9) to (48.center);
      \draw (24) to (47.center);
      \draw (25) to (46.center);
      \draw (39.center) to (22);
      \draw (44.center) to (31);
      \draw (45.center) to (32);
      \draw (52) to (57.center);
      \draw (57.center) to (53);
      \draw (59) to (63.center);
      \draw (60) to (62.center);
      \draw (59) to (61.center);
      \draw (60) to (64.center);
      \draw (53) to (69.center);
      \draw (54) to (68.center);
      \draw (65.center) to (52);
      \draw (66.center) to (59);
      \draw (67.center) to (60);
      \draw (61.center) to (53);
      \draw (62.center) to (53);
      \draw (63.center) to (54);
      \draw (64.center) to (54);
      \draw (70.center) to (54);
      \draw (52) to (70.center);
      \draw (78) to (79);
      \draw (79) to (73.center);
      \draw (79) to (74.center);
      \draw (74.center) to (83.center);
      \draw (73.center) to (84.center);
      \draw (80.center) to (85.center);
      \draw (85.center) to (78);
      \draw (81.center) to (86.center);
      \draw (86.center) to (78);
      \draw (82.center) to (87.center);
      \draw (87.center) to (78);
    \end{tikzpicture}
  }
\end{equation}

\begin{prop}[Lack~\cite{lack2004composing}]
  \label{span-corep-bialg}
  $\Span$ corepresents commutative bimonoids, that is there is a one-to-one correspondence between bicommutative bimonoids in $\mathcal{C}$ and strong monoidal functors $M: \Span \to \mathcal{C}$.
\end{prop}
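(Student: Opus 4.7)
The plan is to extract a bicommutative bimonoid from any such $M$ by reading off its values on a handful of basic spans, and to build the inverse assignment using the decomposition of an arbitrary span into elementary pieces. Given $M : \Span \to \mathcal{C}$, set $A = M(\underline{1})$, so strong monoidality gives $M(\underline{n}) \cong A^{\otimes n}$. Define
\begin{equation*}
\mu = M\bigl(\underline{2} \xleftarrow{\id} \underline{2} \to \underline{1}\bigr), \qquad \Delta = M\bigl(\underline{1} \leftarrow \underline{2} \xrightarrow{\id} \underline{2}\bigr),
\end{equation*}
and let $\eta$ and $\epsilon$ be the images of $\emptyset \leftarrow \emptyset \to \underline{1}$ and $\underline{1} \leftarrow \emptyset \to \emptyset$. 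Commutativity of $\mu$ follows from the isomorphism of spans given by swapping the two elements of the apex $\underline{2}$; cocommutativity of $\Delta$ is the dual statement. Associativity, coassociativity, and the unit/counit axioms follow from associativity of span composition applied to the appropriate triples of basic spans, computed as iterated pullbacks.

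The one nontrivial verification is the bialgebra compatibility $\Delta \circ \mu = (\mu \otimes \mu) \circ \tau \circ (\Delta \otimes \Delta)$. This is exactly the pullback calculation illustrated by the paper immediately before the proposition: composing $\mu$ after $\Delta$ requires pulling back through the cospan $\underline{2} \to \underline{1} \leftarrow \underline{2}$, and the result is the span $\underline{2} \leftarrow \underline{2} \times \underline{2} \to \underline{2}$. The complete graph picture identifies this span with $(\mu \otimes \mu) \circ \tau \circ (\Delta \otimes \Delta)$, the braiding $\tau$ arising from the reordering of $\underline{2} \times \underline{2}$.

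Conversely, given a bicommutative bimonoid $(A, \mu, \Delta, \eta, \epsilon)$, set $M(\underline{n}) = A^{\otimes n}$ and extend to morphisms by factoring each span $S \xleftarrow{a} V \xrightarrow{b} T$ as the composite of a \emph{spreading} span $S \leftarrow V \xrightarrow{\id} V$ followed by a \emph{combining} span $V \xleftarrow{\id} V \to T$. The first is assembled from iterated comultiplications indexed by the fibers of $a$, the second from iterated multiplications indexed by the fibers of $b$. Bicommutativity of the bimonoid ensures that the result is independent of the orderings chosen on these fibers.

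The main obstacle is showing that this assignment respects composition of spans, so that it yields a genuine symmetric monoidal functor. When two composable spans are combined by a pullback, the middle step requires interchanging a combining span followed by a spreading span through some intermediate set; carried out one fiber at a time, this interchange is precisely the bialgebra compatibility applied to each basic cospan $S \to \{y\} \leftarrow T$, which is again the complete graph picture. Together with commutativity, cocommutativity, associativity, and the unit/counit laws, this collection of relations suffices to present $\Span$ as the free symmetric monoidal category on a bicommutative bimonoid, yielding the desired bijection. This coherence content is Lack's theorem, proved in \cite{lack2004composing} via the composition of props by distributive laws.
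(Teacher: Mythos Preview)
The paper does not supply its own proof of this proposition: it is stated with attribution to Lack~\cite{lack2004composing} and left as a citation, with the preceding paragraphs on the complete graph picture and the bialgebra compatibility diagram~\eqref{eq:alg-coalg-compat} serving only as motivation. Your proposal is a correct and reasonably complete sketch of the argument, and it lines up well with that motivating discussion---in particular, you correctly identify the pullback of $\underline{2} \to \underline{1} \leftarrow \underline{2}$ as the source of the bialgebra law and invoke exactly the factorization of a general span into a ``spreading'' part and a ``combining'' part that the paper's fiberwise decomposition of cospans suggests. You also rightly flag functoriality of the inverse construction as the nontrivial step and defer the full coherence to Lack's distributive-law argument, which is precisely what the paper does by citing~\cite{lack2004composing}.
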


\begin{example}
  Let $A$ be a commutative unital monoid in $\Set$.
  For a finite set $I$, let $T_A(I)$ denote the set of maps $I \to A$.
  We will think of $T_A(I)$ as the set of $I$-tuples of $A$.
  Define a strong monoidal functor $M_A: \Span \to \Set$ by sending the span $X \overset{l}{\leftarrow} V \overset{r}{\rightarrow} Y$ to the map
  \begin{equation}
    T_A(X) \overset{l^*}{\to} T_A(V) \overset{\mu_r}{\to} T_A(Y)
  \end{equation}
  where $\mu_r$ and $l^*$ are defined below:
  \begin{description}
  \item[Product and unit] Given a map $I \overset{r}{\to} J$ of finite sets, define a map $\mu_r: T_A(I) \to T_A(J)$ so that the tuple $t \in T_A(I)$ gets sent to tuple $t' \in T_A(J)$ defined by:
    \begin{equation}
      t'(j) = \sum_{i \in r^{-1}(j)} t(i)
 \end{equation}
    We will also use the convention that the empty sum is the unit of $A$.
    This is well-defined because $A$ is commutative and $I$ is finite.
  \item[Coproduct and counit] Given a map $I \overset{l}{\leftarrow} J$, define $l^*: T_A(I) \to T_A(J)$ by precomposition.
    In words, the coproduct is duplication and the counit is deletion.
  \end{description}
\end{example}

\subsection{Plus construction of Span}
\label{sec:span-plus}

Building on the combinatorics described in Section~\ref{sec:black-white-graphs}, we will draw the objects of $\Span^+$ as black and white graphs with the white vertices on top and the black vertices on the bottom:
\begin{equation}
  \begin{tikzpicture}[baseline=(current bounding  box.center)]
    \node [style=White] (0) at (-1.5, 0.5) {};
    \node [style=Black] (1) at (-1.5, -0.5) {};
    \node [style=Black] (2) at (-0.5, -0.5) {};
    \node [style=Black] (3) at (0.5, -0.5) {};
    \node [style=White] (4) at (-0.5, 0.5) {};
    \node [style=White] (5) at (0.5, 0.5) {};
    \draw (0) to (1);
    \draw (4) to (1);
    \draw (4) to (2);
    \draw [bend right=45] (5) to (3);
    \draw [bend left=45] (5) to (3);
  \end{tikzpicture}
\end{equation}

Under this interpretation, the $\gamma$-morphisms of the plus construction can be understood as a process which matches black and white vertices and then resolves the connections according to the rule described in Section~\ref{sec:black-white-graphs}:
\begin{equation}
  \resizebox{0.85\textwidth}{!}{
    \begin{tikzpicture}[baseline=(current bounding  box.center)]
      \node [style=WhiteSq] (0) at (-6.5, 0.5) {};
      \node [style=WhiteSq] (1) at (-5.5, 0.5) {};
      \node [style=WhiteSq] (2) at (-4.75, 0.5) {};
      \node [style=WhiteSq] (3) at (-4.25, 0.5) {};
      \node [style=Black] (4) at (-6.75, -0.5) {};
      \node [style=Black] (5) at (-6.25, -0.5) {};
      \node [style=Black] (6) at (-5.5, -0.5) {};
      \node [style=Black] (7) at (-4.5, -0.5) {};
      \node [style=White] (8) at (-3.75, 0.5) {};
      \node [style=White] (9) at (-3.25, 0.5) {};
      \node [style=White] (10) at (-2.5, 0.5) {};
      \node [style=White] (11) at (-1.5, 0.5) {};
      \node [style=BlackSq] (12) at (-3.5, -0.5) {};
      \node [style=BlackSq] (13) at (-2.5, -0.5) {};
      \node [style=BlackSq] (14) at (-1.75, -0.5) {};
      \node [style=BlackSq] (15) at (-1.25, -0.5) {};
      \node [style=Black] (20) at (-0.25, 0.5) {};
      \node [style=Black] (21) at (0.25, 0.5) {};
      \node [style=Black] (22) at (1, 0.5) {};
      \node [style=Black] (23) at (2, 0.5) {};
      \node [style=White] (24) at (-0.25, 1.5) {};
      \node [style=White] (25) at (0.25, 1.5) {};
      \node [style=White] (26) at (1, 1.5) {};
      \node [style=White] (27) at (2, 1.5) {};
      \node [style=WhiteSq, gray] (32) at (0, 1) {};
      \node [style=WhiteSq, gray] (33) at (1, 1) {};
      \node [style=WhiteSq, gray] (34) at (1.75, 1) {};
      \node [style=WhiteSq, gray] (35) at (2.25, 1) {};
      \node [style=Black] (36) at (3.25, -0.5) {};
      \node [style=Black] (37) at (3.75, -0.5) {};
      \node [style=Black] (38) at (4.5, -0.5) {};
      \node [style=Black] (39) at (5.5, -0.5) {};
      \node [style=White] (40) at (3.25, 0.5) {};
      \node [style=White] (41) at (3.75, 0.5) {};
      \node [style=White] (42) at (4.5, 0.5) {};
      \node [style=White] (43) at (5.5, 0.5) {};
      \node [style=none] (44) at (-0.5, 0) {};
      \node [style=none] (45) at (2.5, 0) {};
      \draw (0) to (4);
      \draw (0) to (5);
      \draw (1) to (6);
      \draw (2) to (7);
      \draw (3) to (7);
      \draw (8) to (12);
      \draw (9) to (12);
      \draw (11) to (14);
      \draw (11) to (15);
      \draw (24) to (32);
      \draw (25) to (32);
      \draw (27) to (34);
      \draw (27) to (35);
      \draw (20) to (32);
      \draw (21) to (32);
      \draw (22) to (33);
      \draw (23) to (34);
      \draw (23) to (35);
      \draw [bend right=45] (43) to (39);
      \draw [bend left=45] (43) to (39);
      \draw (40) to (37);
      \draw (41) to (36);
      \draw (41) to (37);
      \draw (40) to (36);
      \draw [style=Arrow] (44.center) to (45.center);
    \end{tikzpicture}
  }
\end{equation}

In a rough sense, this picture is dual to the one we described for $\Cospan^+$ in Proposition~\ref{cospan-plus-as-graph}.
For $\Cospan^+$, flags were grouped by vertices.
The $\gamma$-morphisms combine multiple groupings into a single grouping of flags.
For $\Span^+$, vertices are connected by edges.
The $\gamma$-morphisms resolve a series of connections into a single connection.

To understand the algebraic significance of $\Span^+$, consider some of the results for the other combinatorial categories:
\begin{description}
\item[Trivial Feynman category]
  Strong monoidal functors $\FF^{triv} \to \mathcal{C}$ are objects in $\mathcal{C}$.
  Strong monoidal functors $(\FF^{triv})^+ \to \mathcal{C}$ are monoids in $\mathcal{C}$.
  (Proposition~\ref{triv-plus-corep-monoids})
\item[Finite sets]
  Strong monoidal functors $\FinSet \to \mathcal{C}$ are commutative algebras in $\mathcal{C}$.
  (Proposition~\ref{FinSet-corep-com-monoids})
  Strong monoidal functors $\FinSet^+ \to \mathcal{C}$ are operads in $\mathcal{C}$.
  (Proposition~\ref{FinSet-plus-corep-operads})
\item[Cospans]
  Strong monoidal functors $\Cospan \to \mathcal{C}$ are special bicommutative Frobenius algebras in $\mathcal{C}$.
  (Section~\ref{sec:cospan-corep-frobenius})
  Strong monoidal functors $\Cospan^+ \to \mathcal{C}$ are properads in $\mathcal{C}$.
  (Proposition~\ref{cospan-plus-as-graph})
\end{description}
Strong monoidal functors $\Span \to \mathcal{C}$ are bicommutative bialgebras (Section~\ref{sec:bialgebra}).
Hence, by analogy, a strong monoidal functor $\Span^+ \to \mathcal{C}$ encodes a kind of bialgebra just like operads encode a kind of algebra.

\begin{example}[Associative $\Span^+$-op]
  Define a strong monoidal functor $A: Span^+ \to \Set$ so that $A(X \overset{l}{\leftarrow} R \overset{r}{\rightarrow} Y) = OF(l) \times OF(r)$ where $OF(f)$ is the set of orders on the fibers of a function $f$.
  This datum composes in the canonical fashion.
  For example, suppose we have ordered fibers $\{a < b < c\} \rightarrow pt \leftarrow \{1 < 2\}$, then the composition would produce the following ordered fibers:
  \begin{center}
    \begin{tabular}{rl}
      $\{a\} \leftarrow \{(a,1) < (a,2)\}$
      & \\
      & $\{(a,1) < (b,1) < (c,1)\} \rightarrow \{1\}$ \\
      $\{b\} \leftarrow \{(b,1) < (b,2)\}$
      & \\
      & $\{(a,2) < (b,2) < (c,2)\} \rightarrow \{2\}$ \\
      $\{c\} \leftarrow \{(c,1) < (c,2)\}$ &
    \end{tabular}
  \end{center}

  If we think of an object $x$ of $\Span^+$ as a black and white graph, then an element of $A(x)$ is an ordering of the edges at each vertex of $x$:
  \begin{equation}
    \begin{tikzpicture}[baseline=(current bounding  box.center)]
      \node [style=White] (0) at (-0.5, 1) {};
      \node [style=White] (1) at (0.5, 1) {};
      \node [style=none] (2) at (-0.5, 0.5) {};
      \node [style=none] (3) at (0, 0.5) {};
      \node [style=none] (4) at (0, -0.25) {};
      \node [style=none] (5) at (1, -0.25) {};
      \node [style=Black] (6) at (-1, -0.75) {};
      \node [style=Black] (7) at (0.5, -0.75) {};
      \node [style=none] (8) at (-1, 0.5) {};
      \draw [in=90, out=-90, looseness=0.75] (2.center) to (5.center);
      \draw [in=90, out=-90, looseness=0.75] (3.center) to (4.center);
      \draw [in=135, out=-90, looseness=0.50] (4.center) to (7);
      \draw [in=45, out=-90, looseness=0.75] (5.center) to (7);
      \draw (0) to (2.center);
      \draw [in=90, out=-30] (0) to (3.center);
      \draw (1) to (7);
      \draw [in=90, out=-150] (0) to (8.center);
      \draw (8.center) to (6);
    \end{tikzpicture}
  \end{equation}
  The $\gamma$-morphisms compose this data so that the order of the black and white vertices incorporates the ordering of the intermediate vertices.
  \begin{equation}
    \begin{tikzpicture}[baseline=(current bounding  box.center)]
      \node [style=White] (0) at (-0.5, 2) {};
      \node [style=GraySq] (1) at (-0.25, 1.25) {};
      \node [style=White] (2) at (0, 2) {};
      \node [style=White] (3) at (0.5, 2) {};
      \node [style=Black] (4) at (-0.5, 0.5) {};
      \node [style=Black] (5) at (0, 0.5) {};
      \node [style=Black] (6) at (0.5, 0.5) {};
      \node [style=GraySq] (7) at (0.25, 1.25) {};
      \node [style=White] (8) at (-4, 0.75) {};
      \node [style=BlackSq] (9) at (-3.5, -0.75) {};
      \node [style=White] (10) at (-3, 0.75) {};
      \node [style=White] (11) at (-2.25, 0.75) {};
      \node [style=BlackSq] (12) at (-2.25, -0.75) {};
      \node [style=WhiteSq] (13) at (-5.75, 0.75) {};
      \node [style=Black] (14) at (-6.25, -0.75) {};
      \node [style=Black] (15) at (-5.25, -0.75) {};
      \node [style=Black] (16) at (-4.25, -0.75) {};
      \node [style=WhiteSq] (17) at (-4.75, 0.75) {};
      \node [style=White] (18) at (2.25, 0.75) {};
      \node [style=White] (20) at (3.25, 0.75) {};
      \node [style=White] (21) at (4.25, 0.75) {};
      \node [style=Black] (22) at (2.25, -0.75) {};
      \node [style=Black] (23) at (3.25, -0.75) {};
      \node [style=Black] (24) at (4.25, -0.75) {};
      \node [style=none] (25) at (-1.5, 0) {};
      \node [style=none] (26) at (1.5, 0) {};
      \draw [in=60, out=-60, looseness=1.50] (0) to (1);
      \draw [in=120, out=-120, looseness=1.50] (2) to (1);
      \draw (1) to (4);
      \draw [in=60, out=-60, looseness=1.25] (1) to (5);
      \draw [in=120, out=-120, looseness=1.25] (7) to (5);
      \draw (3) to (7);
      \draw (7) to (6);
      \draw [in=60, out=-60, looseness=1.50] (8) to (9);
      \draw [in=120, out=-120, looseness=1.50] (10) to (9);
      \draw (11) to (12);
      \draw (13) to (14);
      \draw [in=60, out=-60, looseness=1.25] (13) to (15);
      \draw [in=120, out=-120, looseness=1.25] (17) to (15);
      \draw (17) to (16);
      \draw [in=30, out=-90, looseness=1.50] (18) to (22);
      \draw [in=135, out=-135, looseness=1.25] (20) to (22);
      \draw [in=30, out=-45, looseness=1.50] (18) to (23);
      \draw (20) to (23);
      \draw [in=225, out=150, looseness=1.50] (23) to (21);
      \draw (21) to (24);
      \draw [style=Arrow] (25.center) to (26.center);
    \end{tikzpicture}
  \end{equation}
\end{example}

\begin{prop}
 The indexed enrichment $\Span_{A}$ corepresents associative bimonoids.
\end{prop}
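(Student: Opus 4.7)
The plan is to upgrade Proposition~\ref{span-corep-bialg}: since $\Span$ already corepresents bicommutative bimonoids, the only remaining task is to track how the ordered-fibre datum $A(l,r) = OF(l)\times OF(r)$ breaks the (co)commutativity axioms while leaving everything else intact. I would begin by extracting the bimonoid data. For a strong monoidal functor $F:\Span_A\to\mathcal{C}$, set $M = F(\{\ast\})$ and declare $\mu:M\otimes M\to M$ to be the image of the span $\{1,2\}\leftarrow\{1,2\}\rightarrow\{\ast\}$ equipped with the order $1<2$ on the unique non-trivial fibre; $\Delta$, $\eta$, $\epsilon$ are defined by the analogous ordered spans, all other fibres being singletons or empty.

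Next I would verify the bimonoid axioms. The proof of Proposition~\ref{span-corep-bialg} realises associativity, coassociativity, the unit/counit laws and the bimonoid compatibility \eqref{eq:alg-coalg-compat} as equalities of span compositions. It suffices to check that each such equality lifts to $\Span_A$, i.e.\ that both sides are produced by $\gamma$-morphisms in $\Span^+$ whose images under $A$ decorate the resulting span with the same ordering. For the two bracketings of a triple product one gets the span $\{1,2,3\}\leftarrow\{1,2,3\}\rightarrow\{\ast\}$ with the induced order $1<2<3$ on both sides, so associativity passes. What we lose relative to Proposition~\ref{span-corep-bialg} is precisely the middle-swap isomorphism that previously identified $\mu$ with $\mu\circ C$: this isomorphism reverses the chosen order on the fibre, so it is no longer an isomorphism in $\Span_A$, and commutativity is no longer forced.

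The main obstacle is the bimonoid compatibility square. Its two sides correspond to two span compositions whose middle objects both canonically compute to the complete bipartite set $\{1,2\}\times\{1,2\}$ depicted in Section~\ref{sec:bialgebra}. Because $A$ is a $\Span^+$-op, its value on a composite $\gamma$-morphism is determined by the lexicographic combination of the ordered-fibre data on the pieces; I would check that the left-hand and right-hand compositions produce identical lexicographic orders on the fibres of $\{1,2\}\times\{1,2\}\to\{1,2\}$ and $\{1,2\}\times\{1,2\}\to\{1,2\}$, so the compatibility equation holds in $\mathcal{C}$.

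Finally, I would construct the inverse. Given an associative bimonoid $(M,\mu,\Delta,\eta,\epsilon)$ in $\mathcal{C}$, define $F:\Span_A\to\mathcal{C}$ on an ordered span $(X\overset{l}\leftarrow R\overset{r}\rightarrow Y,\,o_l,o_r)$ by first applying iterated $\Delta$ and $\epsilon$ on each factor of $M^{\otimes X}$ in the order prescribed by $o_l$, then rearranging by the canonical permutation, and finally applying iterated $\mu$ and $\eta$ in the order prescribed by $o_r$ to land in $M^{\otimes Y}$. Functoriality, strong monoidality and compatibility with the $\gamma$-morphisms of the enrichment are then exactly the axioms of an associative bimonoid, establishing the bijective correspondence.
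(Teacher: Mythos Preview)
Your proposal is correct and follows essentially the same approach as the paper: both arguments reduce to Proposition~\ref{span-corep-bialg} and observe that the only effect of the ordered-fibre enrichment is to distinguish morphisms that differ by a fibre permutation, thereby dropping (co)commutativity while preserving the remaining bimonoid axioms. The paper's proof is a one-sentence version of this observation, whereas you spell out the verification of the compatibility square and the inverse construction in detail; nothing in your argument departs from the paper's line of reasoning.
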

\begin{proof}
  The only difference between $\Span_{A}$ and $\Span$ is that permuting fibers changes the morphisms of $\Span_{A}$ but keeps the morphisms of $\Span$ the same.
  Hence a strong monoidal functor $B: \Span_A \to \mathcal{C}$ is the same data as a bimonoid which is not necessarily commutative or cocommutative.
\end{proof}

\bibliography{Plus-Computation}
\bibliographystyle{amsalpha}

\end{document}